\let\oldtocsection=\tocsection
\let\oldtocsubsection=\tocsubsection
\let\oldtocsubsubsection=\tocsubsubsection
\renewcommand{\tocsection}[2]{\hspace{0em}\oldtocsection{#1}{#2}}
\renewcommand{\tocsubsection}[2]{\hspace{2em}\oldtocsubsection{#1}{#2}}
\renewcommand{\tocsubsubsection}[2]{\hspace{4.5em}\oldtocsubsubsection{#1}{#2}}
\def\subsection{\@startsection{subsection}{2}
 \z@{.5\linespacing\@plus.7\linespacing}{-.5em}
 {\normalfont\bfseries}}
\def\subsubsection{\@startsection{subsubsection}{3}
 \z@{.5\linespacing\@plus.7\linespacing}{-.5em}
 {\normalfont\bfseries}}
\numberwithin{equation}{section}
\newtheorem{prop}{Proposition}[section]
\newtheorem{lem}[prop]{Lemma}
\newtheorem{cor}[prop]{Corollary}
\newtheorem{them}[prop]{Theorem}
\newtheorem{rem}[prop]{Remark}
\newtheorem{defn}[prop]{Definition}
\newtheorem{numex}[prop]{Example}
\newtheorem{assump}[prop]{Assumption}
\newtheorem{nota}[prop]{Notation}
\newtheorem{lem-de}[prop]{Lemma-Definition}
\newtheorem{de-lem}[prop]{Definition-Lemma}
\newenvironment{pf}{\begin{trivlist}\item[]{\sc Proof.}}%
           {\nolinebreak \hfill $\Box$ \end{trivlist}}
\newcommand{\G}{{\mathcal G}}
\newcommand{\s}{{\bf s}}             
\renewcommand{\t}{{\bf t}}           
\newcommand{\ttt}{{\mathbb T}}
\def \K {{\mathcal{K}}}
\def \dif {\mathrm{d}}
\def \l {\mathfrak{l}}
\def \k{\mathfrak{k}}
\def \g{\mathfrak{g}}
\def \h{\mathfrak{h}}
\def \b{\mathfrak{b}}
\def \d{\mathfrak{d}}
\def \p{\mathfrak{p}}
\def \fs{\mathfrak{s}}
\def \ft{\mathfrak{t}}
\def \q{\mathfrak{q}}
\def \n{\mathfrak{n}}
\def \Ad { {\mathrm{Ad}} }
\DeclareMathOperator \Ker { {\mathrm{Ker}} }
\def\lcf{\lbrack\! \lbrack}
\def\rcf{\rbrack\! \rbrack}
\newcommand{\ra}{\rangle}
\newcommand{\la}{\langle}
\newcommand{\rt}{\mathsf{r}}
\newcommand{\lt}{\mathsf{l}}
\def \lara {\la \cdot , \cdot \ra}           
\newcommand{\Lie}{\mathcal{L}}
\newcommand{\pr}{{{\rm p}}}
\def \B {{\mathcal{B}}}
\def \C {{\mathbb{C}}}
\def \hs {\hspace{.2in}}
\def \O {{\mathcal{O}}}
\def \calL {{\mathcal{L}}}
\def \sB {{\scriptscriptstyle{B}}}
\def \sG {{\scriptscriptstyle{G}}}
\def \sE {{\scriptscriptstyle{E}}}
\def \sA {{\scriptscriptstyle{A}}}
\def \sK {{\scriptscriptstyle{K}}}
\def \sH {{\scriptscriptstyle{H}}}
\def \sD {{\scriptscriptstyle{D}}}
\def \sQ {{\scriptscriptstyle{Q}}}
\def \sP {{\scriptscriptstyle{P}}}
\def \sM {{\scriptscriptstyle{M}}}
\def \sS {{\scriptscriptstyle{S}}}
\def \sR {{\scriptscriptstyle{R}}}
\def \sO {{\scriptscriptstyle{\O}}}
\def \sX {{\scriptscriptstyle{X}}}
\def \sY {{\scriptscriptstyle{Y}}}
\def \sZ {{\scriptscriptstyle{Z}}}
\def \sT {{\scriptscriptstyle{T}}}
\def \sGo {{\scriptscriptstyle{G,1}}}
\def \sGt {{\scriptscriptstyle{G,2}}}
\def \sAN {{\scriptscriptstyle{AN}}}
\def \sBD {{\scriptscriptstyle{BD}}}
\def \sI {{\scriptscriptstyle{I}}}
\def \sL {{\scriptscriptstyle{L}}}
\def \pist {{\pi_{\rm st}}}
\title{Dirac geometry and integration of Poisson homogeneous spaces}
\author{Henrique Bursztyn}
\address{IMPA, Estrada Dona Castorina 110, Rio de Janeiro, 22460-320, Brazil.}
\email{henrique@impa.br}
\author{David Iglesias-Ponte}
\address{Departamento de Matem\'aticas, Estad\'{\i}stica e Investigaci\'on Operativa, Universidad de La Laguna, Spain}
\email{diglesia@ull.edu.es}
\author{Jiang-Hua Lu}
\address{Department of Mathematics, The University of Hong Kong, Pokfulam Road, Hong Kong}
\email{jhlu@maths.hku.hk}
\begin{document}

\begin{abstract}
\noindent Using tools from Dirac geometry and through an explicit construction, we show
that every Poisson homogeneous space of any Poisson Lie group admits an integration to a symplectic groupoid. Our theorem
follows from a more general result which relates, for a principal bundle $M\to M/H$, integrations of a Dirac structure on $M/H$ to
$H$-admissible integrations
of its pullback Dirac structure on $M$ by pre-symplectic groupoids. Our construction gives a distinguished class of explicit real or holomorphic
pre-symplectic and symplectic groupoids over semi-simple Lie groups and some of their homogeneous spaces, including their
 symmetric spaces, conjugacy classes, and flag varieties. In a more general framework,  we also show integrability of all
homogeneous spaces of
${\mathcal{LA}}^\vee$-Lie groups in the sense of E. Meinrenken.
\end{abstract}

\maketitle


\tableofcontents
\addtocontents{toc}{\protect\setcounter{tocdepth}{1}}
\section{Introduction}\label{s:intro}
Generalizing the association of Lie algebras to Lie groups, each Lie groupoid
over a manifold $M$ defines a Lie algebroid over $M$ (see e.g. \cite{Mac,MM}). A Lie algebroid is said to be {\it integrable}
if it is the Lie algebroid of some Lie groupoid. In such a case there always exists a {\it source-simply-connected
integration}, i.e., a Lie groupoid integrating the given Lie algebroid whose source-fibers are simply connected\footnote{Throughout the paper, a
 topological space
$X$ is said to be simply connected if $\pi_0(X) = 0$ and $\pi_1(X) = 0$.},
and such an integration is unique up to isomorphisms \cite{CF1, MM2}.
In contrast with Lie algebras, not every Lie algebroid is integrable.
Obstructions to integrability are explained in \cite{CF1}.

A Poisson structure $\pi$ on a manifold $M$ makes its co-tangent bundle $T^*M$ into a Lie algebroid, which we denote by $T_\pi^* M$,
and the Poisson manifold $(M, \pi)$ is said to be {\it integrable}
if so is the Lie algebroid $T_\pi^*M$.
Obstructions to integrability of Poisson manifolds are studied in \cite{CF2} (see also \cite{CaFe}).

When a Poisson manifold $(M, \pi)$ is integrable, the
source-simply-connected integration $\G$ of the Lie algebroid $T_\pi^*M$ has the additional structure of a
{\it symplectic groupoid over $(M, \pi)$}: by definition, a symplectic groupoid \cite{CDW, Mac-Xu, We87}
over a Poisson manifold
$(M, \pi)$ is a pair $(\G, \omega)$, where $\G$ is a Lie groupoid (not necessarily
source-simply-connected)
and $\omega$ is a multiplicative symplectic form (see $\S$\ref{ss:kernel}) such that the target map
$(\G, \omega) \to (M, \pi)$ is Poisson.
In this context, we also say that
the symplectic groupoid $(\G, \omega)$ {\it is an integration of} (or {\it integrates}) the Poisson manifold $(M, \pi)$
(or the Poisson structure $\pi$).

Symplectic groupoids are thus global objects
corresponding to Poisson manifolds, {extending the way} Lie
groups integrate Lie algebras. Besides
their key role in a quantization scheme for
Poisson manifolds (see e.g. \cite{BatesWein,Hawk}), symplectic groupoids
have become an indispensable tool in the modern study of
Poisson structures (see e.g. \cite{CFM}). A central problem in Poisson geometry is to identify classes of Poisson manifolds
 which are integrable and devise methods to construct their symplectic groupoids.

An important class of Poisson manifolds consists of Poisson homogeneous spaces of Poisson Lie groups, originally introduced by
Drinfeld \cite{Drinfeld, Drinfeld2} as semi-classical limits of homogeneous spaces of quantum groups.
Different methods have been employed in the literature to tackle their integrability, producing partial results e.g. in
 \cite{BCST,FI, Lu3, Lu-note,S,X} (see also \cite{BCST2,BCQT} for applications to quantization).  In this paper, we prove
integrability of Poisson homogeneous spaces in its full generality.

\smallskip
\noindent
{\bf Theorem.} {\it
Every Poisson homogeneous space of any Poisson Lie group is integrable.
}

\smallskip
In addition to proving integrability, we construct explicit symplectic groupoids
integrating Poisson homogeneous spaces.
Compared to previous works, the novelty of our approach lies in its generality and
in its use of Dirac geometry.

We in fact prove
integrability of a much wider class of Lie algebroids over homogeneous spaces of Lie groups, namely
 {\it homogeneous spaces of ${\mathcal{LA}}^\vee$-Lie groups}  in the sense of E. Meinrenken \cite{Mein}.
To achieve this, we first establish in $\S$\ref{s:PP}  an equivalence,  for any
principal bundle $q: M \to M/H$ and via  a quotient operation,
between integrability of Lie algebroids over $M/H$  and {\it $H$-integrability} (as in \cite{FS19}) of
their pullback Lie algebroids over $M$ by $q$.
The Lie algebroids over a homogeneous space $G/H$ of an ${\mathcal{LA}}^\vee$-Lie group $G$ have the distinguished property that
their pullback Lie algebroids are action Lie algebroids over $G$, which, under some mild
assumptions on $G/H$, are shown to be $H$-integrable through an explicit construction of $H$-integrations.
These assumptions are circumvented in Theorem \ref{th:inte-BHl-all} (see also Theorem \ref{th:inte-LAvee})
to prove integrability of
homogeneous spaces of ${\mathcal{LA}}^\vee$-Lie groups in full generality. That integrability of Poisson homogeneous spaces follows
as a special case is explained in $\S$\ref{ss:main-inte}.

To construct explicit symplectic groupoids for Poisson homogeneous spaces, we turn to Dirac geometry.
Recall (see $\S$\ref{ss:kernel}) that a Dirac structure on a manifold $M$ is a vector sub-bundle $E$ of $TM \oplus T^*M$
satisfying suitable compatibility conditions.
Dirac structures are generalizations of Poisson structures, and our paper relies heavily on two of their key features.
First, any Dirac structure $E$ is a Lie algebroid, and if $E$ is integrable as a such, its source-simply-connected integration
carries the structure of a {\it pre-symplectic groupoid} \cite{BCWZ}, generalizing integrations of Poisson manifolds by
symplectic groupoids.
The second feature is the existence of pullback and pushforward operations in Dirac geometry, see e.g. \cite{villa}.
We show in $\S$\ref{ss:PP-Dirac} that for
a principal bundle $q: M \to M/H$, integrations of a Dirac structure on $M/H$ can be obtained as
quotients of {\it $H$-admissible integrations} (Definition \ref{defn:admi1}) of its pullback Dirac structure on $M$.
When applied to a Poisson structure $\pi$ on $M/H$, we obtain symplectic groupoids of $(M/H, \pi)$ as quotients of
$H$-admissible pre-symplectic groupoids integrating the pullback of $\pi$ to $M$ as a Dirac structure (Corollary \ref{co:pi-inte}).

In $\S$\ref{s:homog-G} and $\S$\ref{s:explicit}, we apply the general results in $\S$\ref{s:PP} - $\S$\ref{s:Dirac}
to Poisson homogeneous spaces of a Poisson Lie group $(G, \pi_\sG)$.
The pullback Dirac structure on $G$ of the Poisson structure on any
Poisson homogeneous space of $(G, \pi_\sG)$ is $(G, \pi_\sG)$-affine, in the sense that
the group multiplication
$m:  (G, \pi_\sG) \times (G, E) \rightarrow (G, E)$
of $G$ is a (forward strong) Dirac map.
 Under the assumption that $(G, \pi_\sG)$ has a Drinfeld double (Definition \ref{defn:D-double}),
we construct in Theorem \ref{thm:presymp}
explicit pre-symplectic
groupoids for every $(G, \pi_\sG)$-affine Dirac structures on $G$. Under an additional assumption on a Poisson homogeneous space $(G/H, \pi)$ of $(G, \pi_\sG)$, we apply the
quotient construction in $\S$\ref{ss:PP-Dirac} and
obtain in Theorem \ref{thm:GH} explicit symplectic groupoids integrating $(G/H, \pi)$.

We remark that techniques from Dirac geometry have been used to solve or shed light on other problems from Poisson geometry, notably in
\cite{MeinLectures} by E. Meinrenken. In fact, in this paper we make crucial use of the reformulation in \cite{Mein, MeinLectures}
of Drinfeld's classification of Poisson homogeneous spaces in terms of equivariant Manin triples and Harish-Chandra sub-pairs
(see Proposition \ref{prop:Hl}).

Results in $\S$\ref{s:PP} - $\S$\ref{ss:explicit-GH} are presented in the category of real smooth manifolds, but we observe in
$\S$\ref{s:hol} that they also carry through in the holomorphic category.
$\S$\ref{s:examples} is devoted to examples, including those previously obtained in \cite{BCST, Lu3,Lu-note} by different methods, and
abundant new examples of
symplectic and pre-symplectic groupoids over semi-simple Lie groups and some of their most
important homogeneous spaces such as symmetric spaces, conjugacy classes, and orbits in flag varieties.

After the first version of our paper was posted in the arXiv, some of the results there were generalized in \cite{Alv}. See Remarks \ref{re:Daniel-1} and
\ref{re:Daniel-2}  for more details.

\medskip

\noindent{\bf Acknowledgments}: Bursztyn thanks CNPq, Faperj and the Fulbright Foundation for financial support.
Lu is partially supported by the RGC of the Hong Kong  (GRF HKU 703712 and 17304415).
Iglesias thanks partial support by European Union (Feder) grant PGC2018- 098265-B-C32.
We are grateful to several institutions for hosting us during various stages of this project, including
EPFL, ETH (Poisson 2016), U. La Laguna, ICMAT, BIRS, Fields Institute (Poisson 2018) and U. C. Berkeley.
We thank D. Alvarez, A. Cabrera, R. Fernandes,  E. Meinrenken and A. Weinstein for stimulating discussions.

\medskip

\noindent
{\bf Notation and conventions}:
Let $\G\rightrightarrows M$ be a Lie groupoid. We denote by
$\s$ and $\t$ the source and target maps, by $m \colon
\G^{(2)} = \{(g, h) \in \G \times \G|\;\s(g)=\t(h)\}\to\G$ the multiplication,
by $\mathrm{inv}:\G\to\G$ the inversion map, and by $\iota:M\to \G$ the
identity section. We write $g h:=m (g,h)$ for the product,
$1_x:=\iota(x)$ for the unit element over $x\in M$, and $g^{-1}:=\mathrm{inv}(g)$. Given $g\in \G$, we have the right
and left translations
\[
\rt_g: \s^{-1}(\t(g))\to
\s^{-1}(\s(g)), \hspace{.15in} \lt_g: \t^{-1}(\s(g))\to \t^{-1}(\t(g)).
\]
When there is no risk of confusion, we simplify our notation and write e.g. $\rt_g (X)$ instead of $\dif \rt_g|_h (X)$,
where $X$ is a tangent (multi-)vector at $h$.

The Lie algebroid of $\G$ is the vector bundle $A
= \Ker(\dif \s)|_\sM$ with  anchor $\rho= \dif \t |_\sA$ and bracket
$[\cdot,\cdot]$ on $\Gamma(A)$ through its identification with {\em right-invariant vector fields} on $\G$ via
\[
\Gamma(A) \ni u \mapsto u^\rt, \;\; u^\rt|_g=  \rt_g (u|_{\t(g)}).
\]
We will often use the notation $A={\rm Lie}({\mathcal{G}})$.

Similarly, for $u \in \Gamma(A)$, define the left invariant vector field $u^\lt$ on $\G$ via
$u^\lt|_g= \lt_g(u|_{\s(g)})$.
The ``opposite'' Lie algebroid $A^{\scriptscriptstyle{op}}$ has underlying vector bundle $\Ker(\dif\t)|_\sM$, anchor $\rho^{\scriptscriptstyle{op}}
= \dif \s|_{A^{\scriptscriptstyle{op}}}$ and bracket
$[\cdot,\cdot]^{\scriptscriptstyle{op}}$ on $\Gamma(A^{\scriptscriptstyle{op}})$ via its identification with
left-invariant vector fields on $\G$.
The differential of $\mathrm{inv}$ gives the Lie algebroid isomorphism $\mathrm{inv}: A
\to A^{\scriptscriptstyle{op}}$.

We keep the same convention for Lie groups and Lie algebras. In particular,  for a Lie group $G$ with $\g = T_eG$,
the Lie bracket on $\g$ satisfies
\begin{equation}\label{eq:uuu}
[u_1, u_2]^\rt = [u_1^\rt, \, u_2^\rt], \;\;\; [u_1, u_2]^\lt =- [u_1^\lt, \, u_2^\lt]\hspace{.2in} u_1, u_2 \in \g.
\end{equation}
Note in particular that our convention on the Lie bracket on $\g$ implies that
\begin{equation}\label{eq:exp}
\frac{d}{d\epsilon}\Big |_{\epsilon=0}{\rm Ad}_{\exp(\epsilon u_1)} u_2 = -[u_1, u_2], \hs  u_1, u_2 \in \g.
\end{equation}
Moreover, if $\theta^\lt$ and $\theta^\rt$ are respectively the left-invariant and right invariant Maurer-Cartan $1$-forms on $G$, then
$d\theta^\lt -[\theta^\lt, \theta^\lt] = 0$ and $d\theta^\rt +[\theta^\rt, \theta^\rt] = 0$. Here for a $\g$-valued $1$-form $\theta$ on a manifold
$M$,  $[\theta, \theta]$ denotes the $\g$-valued $2$-form on $M$ given by $[\theta, \theta](X, Y) = [\theta(X), \theta(Y)]$ for $X, Y \in {\mathfrak{X}}^1(M)$.

We say that an action of a Lie group $K$ on a manifold $P$ is {\em principal} if it is free and the quotient space has a smooth structure such that the quotient map $P \to P/K$ is a submersion. In this context, we make the following observation to be used in $\S$\ref{ss:PP-groupoids}.

\begin{lem}\label{lem:principal}
Let $P$ and $Q$ be manifolds with actions by a Lie group $K$, and let $f:P \to Q$ be a $K$-equivariant map.
If the $K$-action on $Q$ is principal, so is the $K$-action on $P$.
\end{lem}

\begin{proof}
Suppose first that the principal bundle $Q\to Q/K$ admits a section $\kappa$; denote its image by $S\subset Q$.
Since $f$ takes orbits to orbits, it is necessarily transverse to $\kappa$. So the fibered product $P\times_Q (Q/K) = f^{-1}(S)$
is a smooth manifold, which can be naturally identified with $P/K$. This smooth structure on $P/K$ is independent of the
choice of section (since any two sections are related by a map $Q\to K$, and multiplication by this map gives a diffeomorphism
between the corresponding ``slices'' in $P$). In general, one uses local sections of $Q\to Q/K$ to define an atlas on $P/K$
such that $P\to P/K$ is a principal $K$-bundle.
\end{proof}

\begin{rem}\label{rem:nonhausdorff} \em
It is common that the definition of a Lie groupoid allows the space of arrows to be a non-Hausdorff smooth manifold (though Hausdorffness is assumed on the space of units). We remark that if $P$ is such a manifold in the previous lemma, the argument in the proof still shows that, if the $K$-action on $Q$ is principal, the quotient $P/K$ has a (possibly non-Hausdorff) smooth structure for which $P\to P/K$ is a submersion.
\hfill $\diamond$
\end{rem}

\section{Integrations of Lie algebroids via pullbacks}\label{s:PP}
Let $q: M \to M/H$ be a principal bundle. In this section, we establish
an equivalence between integrability of Lie algebroids over $M/H$ and $H$-integrability
of their pullbacks to $M$  in the sense of  \cite{FS19}.

The equivalence mentioned above
is a consequence of an equivalence between isomorphism classes of Lie algebroids over $M/H$ and isomorphism classes
 of {\it $H$-Lie algebroids pairs} over $M$ (see Definition \ref{defn:H-pair}), and a similar equivalence for Lie groupoids.
Such equivalences are hinted, at a deeper level and in the setting of algebraic groups and schemes,
in \cite[$\S$1.8.9]{BB}, where it is shown that the pullback operation by $q$ establishes  an equivalence between {\it $D$-algebras on $M/H$} and
{\it Harish-Chandra $D$-algebras on $M$}. The corresponding statements for Lie algebroids and Lie groupoids have appeared in the literature in
certain forms,   for
example, in \cite[$\S$2.4.3 and $\S$2.4.4]{Rob} for Lie algebroids and partially in \cite[$\S$7.3]{FS19} for Lie groupoids,
but we think it is of interest to have a review of the
 precise equivalences in one place, as we do in this section. For
pullbacks and quotients of Lie algebroids and Lie groupoids, see e.g. \cite{Higgins-Mackenzie, Mac}.

\subsection{$H$-Lie algebroid pairs}\label{ss:HCpairs}
Let $H$ be a Lie group with Lie algebra $\h$, and let $M$ be a manifold with a left $H$-action denoted as
$H \times M \to M,  (h, x) \mapsto h\cdot x$. For $u \in \h$, let $u_\sM \in {\mathfrak{X}}^1(M)$ be given by
\begin{equation}\label{eq:uM}
u_\sM|_x := \frac{d}{d\epsilon}\Big|_{\epsilon = 0} \exp (\epsilon u)\cdot x, \;\;\;\; \; x\in M.
\end{equation}
In this subsection, we do not make any additional assumptions on the $H$-action.

By an {\it $H$-Lie algebroid over $M$} we mean a Lie algebroid $A$ over $M$ with an $H$-action by Lie algebroid automorphisms covering the
$H$-action on $M$. For an $H$-Lie algebroid $A$, one has the induced action of $\h$ on $\Gamma(A)$ by
\begin{equation}\label{eq:h-GaA-0}
(u\cdot a)|_x = - \frac{d}{d\epsilon}\Big |_{\epsilon = 0} \exp(\epsilon u)\cdot \left(a|_{\exp(-\epsilon u)\cdot x}\right), \hs x \in M.
\end{equation}
Let $\h \ltimes M$ be the action Lie algebroid over $M$ with anchor
\[
\rho_\sM: \; \h \ltimes  M \rightarrow TM, \ (u, x) \mapsto u_\sM|_x.
\]
Then $\h \ltimes M$ is an $H$-Lie algebroid over $M$ with the $H$-action given by
\begin{equation}\label{eq:adjact}
 h\cdot (u, x)= (\Ad_h(u), \, h\cdot x).
\end{equation}

\begin{defn}\label{defn:H-pair}
{\em 1) An {\it $H$-Lie algebroid pair over $M$}
 is a pair $(A, \psi)$, where $A$ is an $H$-Lie algebroid over $M$, and
$\psi: \h \ltimes M \rightarrow A$ is an $H$-equivariant Lie algebroid morphism covering the identity map of $M$,
such that
\begin{equation}\label{eq:ua-0}
u \cdot a = [\widetilde{\psi}(u),\, a], \hs u \in \h, \, a \in \Gamma(A),
\end{equation}
where for $u \in \h$,  $\widetilde{\psi}(u) \in \Gamma(A)$ is given by $\widetilde{\psi}(u)|_x = \psi(u, x)$ for $x \in M$.

2) Two $H$-Lie algebroid pairs $(A, \psi)$ and $(A', \psi')$ over $M$ are said to be isomorphic if there is an $H$-equivariant Lie algebroid isomorphism
$I: A \to A'$ covering the identity map of $M$ such that
$I \circ \psi = \psi': \;\h \ltimes M \to A'$.
}
\end{defn}


\begin{rem}\label{rem:H-principal}
{\rm An $H$-Lie algebroid pair $(A, \psi)$ as in Definition \ref{defn:H-pair} is called a
 {\it Harish-Chandra Lie algebroid} in \cite[$\S$1.8.4]{BB}, but our choice of the terminology is due to our emphasis on the role
played by $\psi$ and the need for a corresponding term for Lie groupoids (see Definition \ref{defn:pairs-G}).
When $\psi: \h\ltimes M \to A$ is fiber-wise injective, the pair $(A, \psi)$ is also called
an {\it $H$-principal Lie algebroid} and $\psi$ an {\it action morphism} in \cite{FS19}.  Without the name,
$H$-Lie algebroid pairs  are also considered
in \cite[$\S$2.4.3]{Rob}, where $\psi$ is called a {\it generator} for the
$H$ action on $A$. See also \cite{MPO}.  A similar definition was introduced for Courant algebroids in \cite[$\S$2.8]{Mein}.
We also refer to
\cite[Appendix]{BF:coupling} for a detailed discussion on inner Lie algebra and
Lie group actions on Lie algebroids.
}\hfill $\diamond$
\end{rem}

\subsection{Pullbacks and pushforward of Lie algebroids}\label{ss:PP-algebroids}
In this section, we assume that $H$ is a Lie group acting on a manifold $M$  so that
\[
q: \; M \rightarrow M/H
\]
is an $H$-principal bundle\footnote{All group actions in this paper are left actions unless otherwise specified,
although the quotient space for such an action $H\times M \to M$ is
denoted by $M/H$ instead of $H\backslash M$.}. If $(A, \psi)$ is an $H$-Lie algebroid pair over $M$, then it follows from
$\psi$ being
 a Lie algebroid morphism that
\begin{equation}\label{eq:rho-rho-psi}
\rho_\sA \circ \psi = \rho_\sM: \, \h \ltimes M \rightarrow TM,
\end{equation}
where $\rho_\sA: A \to TM$ is the anchor of $A$. Thus
$\psi$ is automatically fiber-wise injective, and $(A, \psi)$ is an $H$-principal Lie algebroid in the sense of
\cite{FS19}.


Let $B$ be any Lie algebroid over $M/H$ with anchor $\rho_{{\sB}}$. By \cite[Section 1]{Higgins-Mackenzie}, the {\it pullback Lie algebroid} of $B$ by $q$ is the vector bundle
$q^!B =TM \times_{T(M/H)} B$ over $M$, i.e.,
\[
(q^!B)|_x = \{(X, b)\, |\,  X \in T_xM, \, b \in  B|_{q(x)}, \,  \dif q(X) = \rho_{{\sB}}(b)\},\hs x \in M,
\]
with the unique Lie algebroid structure such that $q^!B \to TM \times B$, defined by the fiber-wise inclusions, is a Lie algebroid morphism covering
$M \to M \times (M/H), x \mapsto (x, q(x))$.
In particular, the anchor of $q^!B$ is $q^!B \to TM, (X, b) \mapsto X$, and one has
${\rm rank}(q^!B) = \dim H + {\rm rank}(B)$.
The next result
is a special case of \cite[Theorem 4.3.6]{Mac}.

\begin{lem}\label{lem:AA}
For a Lie algebroid $A$ over $M$ and a Lie algebroid $B$ over $M/H$, a vector bundle morphism $\phi: A \to B$ covering
$q: M \to M/H$ is a Lie algebroid morphism if and only if the vector bundle morphism
\[
A \rightarrow q^!B, \; a \mapsto (\rho_\sA(a), \, \phi(a)),
\]
is a Lie algebroid morphism covering the identity map of $M$. In particular,
\begin{equation}\label{eq:chi}
\chi: \; q^!B \rightarrow B, \,(X, \, b) \mapsto b, \hs x \in M, \, (X, b) \in (q^!B)|_x,
\end{equation}
is a Lie algebroid morphism covering $q: M \to M/H$.
\end{lem}

The following lemma can be proved directly from the definitions.

\begin{lem}\label{lem:qB-pair}
Every Lie algebroid $B$ over $M/H$ gives rise to the $H$-Lie algebroid pair $(q^!B, \psi_\bullet)$ over $M$, where $H$ acts on $q^!B$ by
\[
h\cdot (X, \, b) = (\dif h(X), \, b), \hs \mbox{for} \;x \in M, \,  (X, b) \in (q^!B)|_x,
\]
and the Lie algebroid morphism $\psi_\bullet: h \ltimes M \rightarrow q^!B$ is given by
$\psi_\bullet (u, x) = (u_\sM|_x, 0)$.
\end{lem}

We now show that,  through
a {\it pushforward} operation, every $H$-Lie algebroid pair $(A, \psi)$
over $M$ is isomorphic to $(q^!B, \psi_\bullet)$ for some Lie algebroid $B$ over $M/H$.

Let $(A, \psi)$ be an $H$-Lie algebroid pair over $M$,  and let $\rho_\sA: A \to TM$ be the anchor of $A$.
Since
$\psi$ is fiber-wise injective, ${\rm Im}(\psi)$ is an $H$-invariant vector sub-bundle of $A$ of rank equal to $\dim \h$.
Consider the
 quotient vector bundle $A/{\rm Im}(\psi)$ with the induced $H$-action, and
define the vector bundle $\overline{A}^\psi$ over $M/H$ by
\begin{equation}\label{eq:OA}
\overline{A}^\psi = (A/{\rm Im}(\psi))/H.
\end{equation}
Denote the image of $a+{\rm Im}(\psi) \in A/{\rm Im}(\psi)$ in $\overline{A}^\psi$ by $[a+{\rm Im}(\psi)]$.
In the terminology of \cite[Definition 4.4.2]{Mac}, the injective Lie algebroid morphism $\psi: \h \ltimes M \to A$
 together with the $H$-action on $A/{\rm Im}(\psi)$
form an {\it ideal system} of $A$. Lemma \ref{lem:A-bar} below,  also proved in \cite[Theorem 3.6]{MPO} and
 \cite[Theorem 2.4.5]{Rob}, follows from
\cite[Theorem 4.4.3]{Mac}.

\begin{lem}\label{lem:A-bar}
For any $H$-Lie algebroid pair $(A, \psi)$, $\overline{A}^\psi$ has the structure of a Lie algebroid over $M/H$, uniquely determined by the
property that
\[
A \rightarrow \overline{A}^\psi,\; a \mapsto [a + {\rm Im}(\psi)]
\]
  is a Lie algebroid morphism covering  $q: M \to M/H$. The anchor of $\overline{A}^\psi$ is given by
\begin{equation}\label{eq:rho-Abar}
\rho_{\overline{\sA}^\psi}: \; \overline{A}^\psi \rightarrow T(M/H), \; [a+{\rm Im}(\psi)] \mapsto \dif q (\rho_\sA(a)).
\end{equation}
\end{lem}

For an $H$-Lie algebroid pair $(A, \psi)$ over $M$, we call  $\overline{A}^\psi$
the {\it pushforward} of  $(A, \psi)$ by $q$. (In \cite[$\S$2.4.3]{Rob}, $\overline{A}^\psi$ is called the {\it reduction} of $A$.)

We can now state the precise relations between the pullback and pushforward operations on Lie algebroids by $q: M \to M/H$.
Part (1) of the following proposition has also been proved in \cite[Proposition 2.4.6]{Rob}.

\begin{prop}\label{prop:bijection}
(1) For any $H$-Lie algebroid pair $(A, \psi)$, the map
\[
I: \; A \rightarrow q^!(\overline{A}^\psi), \; a \mapsto (\rho_\sA(a), \, [a + {\rm Im}(\psi)]),
\]
gives an isomorphism of $H$-Lie algebroid pairs from $(A, \psi)$ to $(q^!(\overline{A}^\psi), \psi_\bullet)$;

(2) For any Lie algebroid $B$ over $M/H$, the map
\begin{equation}\label{eq:BB}
\overline{\left(q^!B\right)}^{\psi_\bullet} \rightarrow B, \; [(X, b) + {\rm Im}(\psi_\bullet)] \mapsto b,
\end{equation}
is an isomorphism of Lie algebroids over $M/H$ covering the identity map of $M/H$.
\end{prop}

\begin{proof}
As $A \to \overline{A}^\psi, a \mapsto [a + {\rm Im}(\psi)]$, is a Lie algebroid morphism, it follows from Lemma \ref{lem:AA} that
$I: A \rightarrow q^!(\overline{A}^\psi)$ is a Lie algebroid morphism.  Note that as vector bundles over $M$, $A$ and
$q^! \overline{A}^\psi$ have the same rank, and $I$ is fiber-wise injective by \eqref{eq:rho-rho-psi}. Thus $I$ is an isomorphism of Lie algebroids.  It also follows from the definitions that
\[
I \circ \psi = \psi_\bullet:\; \h \ltimes M \rightarrow q^!\overline{A}^\psi.
\]
 Thus $I: (A, \psi) \to (q^!\overline{A}^\psi)$ is an isomorphism of $H$-Lie algebroid pairs over $M$.

In the terminology of \cite[Corollary 4.4.4]{Mac}, the {\it kernel system} of the fiber-wise surjective Lie algebroid morphism
$\chi: q^!B \to B$ in \eqref{eq:chi} is precisely the ideal system of $q^!B$ used in forming the quotient $\overline{\left(q^!B\right)}^{\psi_\bullet}$.
By \cite[Corollary 4.4.4]{Mac}, the  map in \eqref{eq:BB} is a well-defined isomorphism of Lie algebroids. This proves (2).
\end{proof}


Proposition \ref{prop:bijection} can be reformulated as follows.

\begin{them}\label{thm:PP-algebroids}
The assignment $(A, \psi) \mapsto \overline{A}^\psi$
 gives a one-to-one correspondence
from isomorphism classes of $H$-Lie algebroid pairs over $M$ to isomorphism classes of
Lie algebroids over $M/H$ covering the identity map of $M/H$, with the
inverse correspondence defined via
$B \mapsto (q^!B, \psi_\bullet).$
\end{them}

\begin{proof} One first checks from the definitions that the assignments
$(A, \psi) \mapsto \overline{A}^\psi$ and $B \mapsto (q^!B, \psi_\bullet)$
 indeed give well-defined maps between the corresponding sets of isomorphism classes, which,
by Proposition \ref{prop:bijection}, are inverses of each other.
\end{proof}


\subsection{$H$-Lie groupoid pairs}
We now introduce the global counterparts of $H$-Lie algebroid pairs. In this subsection, we will consider a Lie group $H$ acting on a manifold $M$, with no further assumption.
Let $H \ltimes M$ be the action groupoid over $M$ with source $(h, x) \mapsto x$ and target $(h, x) \mapsto h\cdot x$.

\begin{defn}\label{defn:pairs-G}
{\em
(1) By an {\it $H$-Lie groupoid pair over $M$} we mean a pair $(\G, \Psi)$, where $\G$ is a Lie groupoid over $M$, and $\Psi$ is a
Lie groupoid morphism from $H \ltimes M$ to $\G$ covering the identity map of $M$.

(2) Two $H$-Lie groupoid pairs $(\G, \Psi)$ and $(\G', \Psi')$ are said to be isomorphic if there exists a Lie groupoid isomorphism
$I: \G \to \G'$ covering the identity map of $M$ such that $I \circ \Psi = \Psi': H \times M \to \G'$.
}
\end{defn}

With suitable conditions on the Lie groupoid morphism $\Psi$, an $H$-Lie groupoid pair $(\G, \Psi)$ is called an
{\it $H$-principal Lie groupoid over $M$} in \cite{FS19} (see Definition 2.2 and Proposition 2.4 therein).

Given an $H$-Lie groupoid pair $(\G, \Psi)$ over $M$, each $h \in H$ defines a bisection
\[
b_h: \; M \to \G, \; x \mapsto \Psi(h, x),
\]
 of $\G$ covering the action of $h$ on $M$, i.e.,
$\s (b_h(x))=x$ and $\t(b_h(x)) = h\cdot x$ for $x \in M$. One then has an $(H \times H)$-action on
$\G $ via
\begin{equation}\label{eq:double-action}
(h_1,h_2)\cdot g = \Psi(h_1,\t (g))g \Psi(h_2,\s(g))^{-1}.
\end{equation}


\begin{lem}\label{lem:G-A} \cite[Proposition 3.2]{FS19}
Let $(\G, \Psi)$ be an $H$-Lie groupoid pair over $M$, let $A = {\rm Lie}(\G)$, and let
$\psi: \h \ltimes M \to A$ be the Lie algebroid morphism induced by $\Psi$. Then $(A, \psi)$ is an $H$-Lie algebroid pair over $M$,
where $H$ acts on $A$  by differentiating the $H \cong H_{\rm diag}$-action on $\G$.
\end{lem}

\begin{defn}\label{defn:Lie-pairs}
{\rm In the setting of Lemma \ref{lem:G-A}, we call $(A, \psi)$ {\it the $H$-Lie algebroid pair of $(\G, \Psi)$} and write $(A, \psi) = {\rm Lie}(\G, \Psi)$.
}
\end{defn}

\subsection{Pullbacks and pushforwards of Lie groupoids}\label{ss:PP-groupoids} 
Assume again that $q: M \to M/H$ is a principal bundle.
Recall
\cite[$\S$1]{Higgins-Mackenzie} that
for a Lie groupoid $\overline{\s}, \overline{\t}: \K \rightrightarrows M/H$, one has the {\it pullback Lie groupoid}
$q^!\K$ over $M$, where
\[
q^!\K = \{ (x_1,k,x_2) \in M\times \K \times M \;|\; q(x_1) = \bar{\t}(k), \; \bar{\s}(k) =q(x_2)\},
\]
with source and target $\s(x_1, k, x_2) = x_2$ and $\t(x_1, k, x_2) = x_1$, and multiplication
$(x_1, k, x_2) (x_2, k', x_2^\prime) = (x_1, k \, k', x_2^\prime)$.
Note (see \cite[Corollary 1.9]{Higgins-Mackenzie}) that
\begin{equation}\label{eq:Liepb}
{\rm Lie}(q^!\K) \cong q^! {\rm Lie}(\K).
\end{equation}

The following lemma follows directly from the definitions.

\begin{lem}\label{lem:pull-pair}
Every Lie groupoid $\K$ over  $M/H$ gives rise to the $H$-Lie groupoid pair $(q^! \K, \Psi_\bullet)$ over $M$, where
$\Psi_\bullet:  H \ltimes M \rightarrow q^! \K,  (h, x) \mapsto (h\cdot x, \, 1_{q(x)}, \, x)$.
 Furthermore, with $B = {\rm Lie}(\K)$ and  $\psi_\bullet: \h \ltimes M \to q^!B$ as given in Lemma \ref{lem:qB-pair}, one has
\[
{\rm Lie}(q^! \K, \Psi_\bullet) \cong (q^!B, \psi_\bullet).
\]
\end{lem}

 The pushforward operation on $H$-Lie groupoid pairs, as we explain in Proposition \ref{prop:quot} below,
is an example of quotients of Lie groupoids \cite[$\S$2.4]{Mac}.
Part (1) of Proposition \ref{prop:quot} is also proved in \cite[Proposition 7.8]{FS19},
but  we include an outline of the proof for the convenience of the reader.  See also \cite{Alv} for a more general setting.

\begin{prop}\label{prop:quot}
Let $(\G, \Psi)$ be an $H$-Lie groupoid pair over $M$,  let $(A, \psi) = {\rm Lie}(\G, \Psi)$, and let  $H \times H$ act on $\G$ as in
\eqref{eq:double-action}.

(1) The orbit space $\overline{\G}^\Psi:=\G/(H\times H)$
 is a Lie groupoid over $M/H$ characterized by the property that the quotient map
$p:\G \to \overline{\G}^\Psi$ is a submersion and a groupoid morphism;

(2) One has ${\rm Lie}(\overline{\G}^\Psi) \cong \overline{A}^\psi$, where $\overline{A}^\psi$ is given in Lemma \ref{lem:A-bar}.
\end{prop}

\begin{proof}
The $(H \times H)$-action on $\G$ in \eqref{eq:double-action} is such that
\begin{equation}\label{eq:sourtar}
\t((h_1,h_2)\cdot g) = h_1\cdot \t(g),\;\;\; \s((h_1,h_2)\cdot g) =
h_2\cdot \s(g),\;\;\;\; h_1, h_2 \in H, \, g \in \G,
\end{equation}
which implies that  $(\t,\s): \G \to M\times M$ is $(H\times H)$-equivariant. Since
the $(H\times H)$-action on $M\times M$ is principal, so is the $(H\times H)$-action on $\G$ by Lemma~\ref{lem:principal}.
Hence the orbit space $\overline{\G}^\Psi=\G/(H\times H)$ is smooth,
the quotient projection $p: \G \to \overline{\G}^\Psi$ is a submersion\footnote{Note that Lemma~\ref{lem:principal}
can still be used if $\G$ is not necessarily
Hausdorff, see Remark~\ref{rem:nonhausdorff}.}
and there are unique submersions $\overline{\s},\,\overline{\t}:  \overline{\G}^\Psi \to M/H$ such that
\begin{equation}\label{eq:diag-t}
q \circ \t = \overline{\t} \circ p \hs \mbox{and} \hs q \circ \s = \overline{\s} \circ p .
\end{equation}
Write ${p}(g)=\overline{g}$ for $g \in \G$.
To define a groupoid multiplication on $\overline{\G}$ with source $\overline{\s}$ and target $\overline{\t}$, suppose that
$g_1, g_2 \in \G$ are such that
$\overline{g}_1$, $\overline{g}_2 \in \overline{\G}^\Psi$ are composable. Then $q(\s(g_1))= q(\t(g_2))$, so there exists $h \in H$ such that
$\s(g_1) = h\cdot\t(g_2) = \t((h, e)\cdot g_2)$. Replacing $g_2$ by $(h, e)\cdot g_2$, we may assume that
 $g_1$, $g_2 \in \G$ are composable, and define $\bar{g}_1 \bar{g_2} = \overline{g_1g_2}$.
If $g_1' =
(h_1,h_2)\cdot g_1$ and $g_2' = (l_1,l_2)\cdot g_2$  are also composable, then by  \eqref{eq:sourtar},
\[
h_2 \cdot \s(g_1) = l_1 \cdot \t(g_2) = l_1 \cdot \s(g_1),
\]
and thus $h_2 = l_1$ by the freeness of the $H$-action on $M$.
Then (see Remark~\ref{rem:dlg} below)
\begin{equation}\label{eq:dlg}
((h_1,l)\cdot g_1) ((l,l_2)\cdot g_2)=(h_1,l_2)\cdot (g_1 g_2),
\end{equation}
hence $\overline{g_1  g_2} = \overline{g_1'  g_2'}$.  Thus $\overline{\G}^\Psi$ is a well-defined Lie groupoid over $M/H$, proving $(1)$.


To verify (2), recall that $A = {\rm Ker}(\dif \s)|_\sM \subset T\G|_{\sM}$. Let
$B = {\rm Lie}(\overline{\G}^\Psi) = {\rm Ker}(\dif \bar{\s})|_{\sM/\sH}$.
Then $p: \G \to \overline{\G}^\Psi$, being a Lie groupoid morphism, maps source fibers to source fibers, and
$\dif p|_{A}: A \to B$ is the Lie algebroid morphism defined by $p$.
One checks directly using the definitions that  for $a, b \in A$, $(\dif p)(a) = (\dif p)(b)$ if and only if
$a =  h\cdot b + \mathrm{Im}(\psi)$,
where $b\mapsto h\cdot b$ denotes the $H$-action on $A$ arising from the $H_{\rm diag}$-action on $\G$.
Thus $\dif p|_\sA: A \to B$ induces a well-defined fiber-wise injective Lie algebroid morphism $\tilde{p}: \overline{A}^\psi \to B$
covering the identity map on $M/H$. Since $\overline{A}^\psi$ and $B$ have the same rank, $\tilde{p}$ is a Lie algebroid isomorphism.
\end{proof}

\begin{rem}\label{rem:dlg}
{\em Note that \eqref{eq:dlg} says that the
action of $H\times H$ on $\G$ is ``morphic'' in the sense that the action
map $(H\times H) \times \G \to \G$ is a groupoid morphism
over the action map $H \times M \to M$, where
$(H\times H) \times \G$, as a Lie groupoid over $H \times M$, is the direct product of the pair groupoid $H\times H \rightrightarrows H$ with $\G$.
In other words, $\G$ carries an action of the double Lie groupoid $H\times H \rightrightarrows H$ in the sense of \cite[Definition 1.5]{BrownMackenzie}, which explains the groupoid structure on the quotient $\G/(H\times H)$ (see \cite[$\S$3.3]{StefThesis}).
\hfill $\diamond$}
\end{rem}

\begin{numex}\label{ex:HLM}
{\rm
Suppose that $L$ is a Lie group containing $H$ as a closed Lie sub-group and that
$L$ acts on $M$ extending the action of $H$ on $M$. One then has the
$H$-Lie groupoid pair
$(\G, \Psi)$ over $M$, where $\G = L \ltimes M$ is the action groupoid, and $\Psi: H \ltimes M \to \G$
is defined by $H \hookrightarrow L$.  The $(H \times H)$-action on $\G$ induced by $\Psi$ is
\[
(h_1, h_2)\cdot (l, x) = (h_1lh_2^{-1}, \, h_2\cdot x).
\]
Let $\l$ be the Lie algebra of $L$, and let $(\l/\h) \times_H M$ be the quotient of the $H$-action on $\l \times M$ by
$h \cdot (a, x) = ({\rm Ad}_h a, \, h\cdot x)$. One then has the $H$-Lie algebroid pair $(\l \ltimes M, \psi)$ over $M$,
where $\psi: \h \ltimes M \to \l \ltimes M$ is defined by the Lie algebra inclusion $\h \hookrightarrow \l$.
Consequently, $(\l/\h) \times_H M \cong \overline{(\l \ltimes M)}^\psi$ has a unique Lie algebroid structure over $M/H$
such that $p: \l \ltimes M \to (\l/\h) \times_H M$ is a Lie algebroid morphism covering $q: M \to M/H$.
By Proposition \ref{prop:quot}, the quotient manifold $(L\ltimes M)/(H \times H) = (L/H) \times_H M$ is a Lie groupoid over $M/H$ with Lie algebroid
$(\l/\h) \times_H M$.
This example appeared in \cite[$\S$8]{BlWe}.
}
\end{numex}

We now prove the following analog of Proposition \ref{prop:bijection} for Lie groupoids.

\begin{prop}\label{prop:bijection-groupoids}
(1) For any $H$-Lie groupoid pair $(\G, \Psi)$, the map
\[
I: \; \G \rightarrow q^!(\overline{\G}^\Psi), \; g \mapsto (\t(g), \, p(g), \, \s(g)),
\]
is an isomorphism of $H$-Lie groupoid pairs from $(\G, \Psi)$ to $(q^!(\overline{\G}^\Psi), \Psi_\bullet)$;

(2) For any Lie groupoid $\K$ over $M/H$, the map
\begin{equation}\label{eq:KG}
\overline{\left(q^!\K\right)}^{\Psi_\bullet} \rightarrow \K, \; [x_1, k, x_2] \mapsto k,
\end{equation}
is an isomorphism of Lie groupoids over $M/H$ covering the identity map of $M/H$.
\end{prop}

\begin{proof}
(1) One checks directly that $I: \G \to q^!(\overline{\G}^\Psi)$  is a Lie groupoid morphism covering the identity map of $M$. It also follows from the definitions that $I \circ \Psi = \Psi_\bullet$. It remains to show that
$I$ is a Lie groupoid isomorphism. It is straightforward to check that $I$ is bijective. Since  $\G$ and $q^!\overline{\G}^\Psi$
have the same dimension, one only needs to show that for every $g \in \G$,
the linear map
$\dif I|_g: T_g\G \to T_{I(g)} (q^!\overline{\G}^\Psi)$ is injective.

Suppose that $v_g \in T_g\G$ is such that $\dif I(v_g) = 0$. Then $\dif \t(v_g) = 0$,
$\dif \s(v_g) = 0$, and $\dif p(v_g) = 0$. It follows from $\dif p (v_g) = 0$ that
 $v_g = (\psi (u,\t(g)))^\rt+(\mathrm{inv}(\psi (v,\s(g))))^\lt$
for some $u, v \in \h$.
 The fact that $\dif \s(v_g) = 0$ then implies that
\[
0 = \dif \s (\mathrm{inv}(\psi(v, \s(g)))) = \dif \t (\psi(v, \s(g))) = \rho_\sA(\psi(v, \s(g)))=
\rho_\sM(v, \s(g)),
\]
so $v = 0$. The fact that $\dif \t(v_g) = 0$ implies that
\[
0 = \dif \t (\psi(u, \t(g))) = \rho_\sA(\psi(u, \t(g))) = \rho_\sM (u, \t(g)),
\]
so $u = 0$. Thus $v_g = 0$, and we conclude that $I: (\G, \Psi) \to (q^! \overline{\G}^\Psi, \Psi_\bullet)$ is an isomorphism of $H$-Lie
groupoid pairs over $M$.

(2) Given now a Lie groupoid $\bar{\s}, \bar{\t}: \K \rightrightarrows M/H$, consider the $H$-Lie groupoid pair
$(q^! \K, \Psi_\bullet)$.
The $(H \times H)$-action on $q^!\K$ defined by $\Psi_\bullet$ is then given by
\[
(h_1, h_2) \cdot (x_1, k, x_2) = (h_1\cdot x_1, \, 1_{q(x_1)}, \, x_1) (x_1, k, x_2) (x_2, \,1_{q(x_2)}, \, h_2\cdot x_2) =
(h_1\cdot x_1, k, h_2 \cdot x_2).
\]
Let
${\rm Graph}_{(\bar{\t}, \bar{\s})}(\K) = \{(\bar{\t}(k), k, \bar{\s}(k)): k \in \K\}\subset (M/H) \times
\K \times (M/H)$.
 Then
\[
(q^!\K)/(H \times H) \rightarrow {\rm Graph}_{(\bar{\t}, \bar{\s})}(\K), \;
[x_1, k, x_2] \mapsto (\bar{\t}(k), k, \bar{\s}(k)),
\]
is a diffeomorphism, leading to
the Lie groupoid isomorphism
\[
\overline{\left(q^!\K\right)}^{\Psi_\bullet} =  (q^!\K)/(H \times H) \rightarrow \K, \;
[x_1, k, x_2] \mapsto k.
\]
\end{proof}

\begin{them}\label{thm:PP-groupoids}
The assignment $(\G, \Psi) \mapsto \overline{\G}^\Psi = \G/(H \times H)$
 induces a one-to-one correspondence from isomorphism classes of $H$-Lie
groupoid pairs over $M$ to that of Lie groupoids over $M/H$, with the inverse correspondence defined by the assignment
$\K \mapsto (q^!\K,
\Psi_\bullet).$
\end{them}

\begin{proof}
Checking directly using the definitions, one sees that both assignments induce well-defined
maps between the corresponding sets of isomorphism classes.  The two maps are inverses of each other by Proposition \ref{prop:bijection-groupoids}.
\end{proof}

\subsection{$H$-Integrability}\label{ss:H-admi}
For a principal bundle $q: M \to M/H$, \eqref{eq:Liepb} shows that if a Lie algebroid $B$ over $M/H$ is integrable, then its pullback
Lie algebroid $q^!B$ over $M$ is also integrable. The next example shows that the converse is not true.

\begin{numex}\label{ex:integ}
 {\rm
Let $M =S^3\times \mathbb{R}$ and consider the $S^1$-action on $S^3$ defining the Hopf fibration $S^3\to S^2$.
Extend this action to $M$ by the trivial action on $\mathbb{R}$, so that
 $$
 q: M\to M/S^1=S^2\times \mathbb{R}
 $$
 is the Hopf map times the identity map of $\mathbb{R}$. Let $\pi$ be the Poisson structure on $S^2\times\mathbb{R}$
with symplectic leaves $(S^2\times \{ t\}, (1+t^2) \omega _{S^2})$, with $\omega _{S^2}$ the standard area form on $S^2$; it is well-known that
$\pi$ is not integrable \cite{We87} (see also \cite{CaFe,CF2}), but the pullback of the co-tangent bundle Lie algebroid $T_\pi^* (S^2 \times {\mathbb{R}})$
to $M$ is integrable: it is a regular
Lie algebroid on $S^3\times \mathbb{R}$ with leaves $S^3\times \{ t\}$,
 and its integrability follows from the fact that $\pi_2(S^3)$ vanishes \cite{CF1} (see also \cite[$\S$8]{BCWZ}).
\hfill $\diamond$
}
\end{numex}

One thus needs a stronger integrability
of $q^!B$ to guarantee integrability of $B$.
The following definitions are adapted from \cite{FS19}.

\begin{defn}\label{defn:A-H-integrable}
{\rm  (1) An $H$-Lie algebroid pair $(A, \psi)$ over $M$  is said to {\it integrable} if there exists an $H$-Lie groupoid pair
$(\G, \Psi)$ such that ${\rm Lie}(\G, \Psi) \cong (A, \psi)$, in which case $(\G, \Psi)$ is called an {\it integration} of $(A, \psi)$;

(2) For a Lie algebroid $B$ over $M/H$, the pullback Lie algebroid $q^!B$ over $M$ is said to be
{\it $H$-integrable} if the $H$-Lie algebroid pair $(q^!B, \psi_\bullet)$ is
integrable,   and in such a case an integration
of $(q^!B, \psi_\bullet)$  is also called an {\it $H$-integration} of $q^!B$.
}
\end{defn}


\begin{them}\label{thm:inte-MMH}
A Lie algebroid $B$ over $M/H$ is integrable if and only if the pullback Lie algebroid $q^!B$ over $M$ is $H$-integrable. In such a case,
if $(\G, \Psi)$ is any $H$-integration of $q^!B$, then $\overline{\G}^\Psi $ is an integration of $B$.
\end{them}

\begin{proof}
The statements follow directly from Theorem \ref{thm:PP-algebroids} and Theorem \ref{thm:PP-groupoids}.
\end{proof}

\begin{rem}\label{re:H}
{\rm
(1) When $H$ is simply connected, integrability of an $H$-Lie algebroid pair $(A, \psi)$ over $M$
 is automatic as long as $A$ is integrable \cite[Proposition A.17]{BF:coupling}.
 Thus when $H$ is simply connected,
 a Lie algebroid $B$ over $M/H$ is integrable if and only if the Lie algebroid $q^!B$ over $M$ is integrable.

(2) For an arbitrary $H$, obstructions to integrability of an $H$-Lie algebroid pair $(A, \psi)$ are discussed in \cite[$\S$4]{BF} and \cite[$\S$5]{FS19}.
Briefly,
assume that $A$ is integrable, and let $\Sigma(A)$ be the source-simply-connected Lie groupoid integrating $A$. Assume also that
$H$ is connected and let $\widetilde{H}$ be the simply connected cover of $H$. Let
 $p: \widetilde{H} \to H$
 be the covering map and identify
$\pi_1(H) \cong {\rm Ker}(p) \subset \widetilde{H}$.
Let $\widetilde{H}$ act on $M$ through
$p$ and let  $\widetilde{H} \ltimes M$ be the action Lie groupoid over $M$. The Lie algebroid morphism $\psi: \h \ltimes M \to A$ then lifts to a
unique Lie groupoid morphism
$\widetilde{H} \ltimes M \rightarrow \Sigma(A)$.
It is shown in \cite[Proposition 4.4]{BF} (see also the proof of \cite[Theorem 5.3]{FS19}) that the $H$-Lie algebroid pair $(A, \psi)$ is integrable if and only if
the image of $\pi_1(H) \times M \subset \widetilde{H} \times M$ is an embedded Lie sub-groupoid of $\Sigma(A)$. Indeed, the quotient of $\Sigma(A)$ by this image is a Lie groupoid admitting a morphism of $H\ltimes M$ into it, hence defining an $H$-Lie groupoid pair integrating  $(A, \psi)$ (called its ``canonical $H$-integration'' in \cite[$\S$ 5.1]{FS19}).
\hfill $\diamond$
}
\end{rem}

\begin{rem}\label{re:Daniel-1}
{\rm
Theorem \ref{thm:inte-MMH} has been generalized in \cite{Alv} to arbitrary surjective submersions $q: M \to M'$.
\hfill $\diamond$
}
\end{rem}

\section{Lie algebroids associated to equivariant Lie algebra triples}\label{s:equiv-triple}
Let $G$ be an arbitrary Lie group. In $\S$\ref{ss:dressing} - $\S$\ref{ss:BHl}, we use results in $\S$\ref{s:PP} to
give a class of integrable Lie algebroids
associated to {\it $G$-equivariant Lie algebra triples} \cite[Definition 3.2]{L-B-M:Dirac}.
In $\S$\ref{ss:LA-vee} we interpret the results in $\S$\ref{ss:dressing} - $\S$\ref{ss:BHl}
in terms of homogeneous spaces of ${\mathcal{LA}}^\vee$-Lie groups \cite[$\S$4]{Mein}. We will show in $\S$\ref{s:homog-G} how
Poisson homogeneous spaces fit into the framework of this section.

\subsection{Equivariant Lie algebra triples and the dressing action}\label{ss:dressing}
Let $G$ be a Lie group with Lie algebra $\g$, and let $\d$ be a Lie algebra containing $\g$ as a Lie sub-algebra.
Recall that $(G, \d)$ is called a {\it Harish-Chandra pair} if the adjoint action of $\g$ on $\d$ integrates to an action of $G$ on $\d$, which we will denote
as ${\rm Ad}_g \in {\rm Aut}(\d)$ for $g \in G$.

\begin{defn}\label{defn:G-triples} \cite[Definition 3.2]{L-B-M:Dirac}
{\rm Let $G$ be a Lie group with Lie algebra $\g$. A {\it $G$-equivariant Lie algebra triple} is a triple $(\d, \g, \k)$, where $\d$ is a
Lie algebra, containing $\g$ and $\k$ as Lie sub-algebras, such that $(G, \d)$  is a Harish-Chandra pair and
$\d = \g + \k$ is a vector space direct sum.
}
\end{defn}

In this section, we fix a Lie group $G$ and a $G$-equivariant triple
$(\d, \g, \k)$. Let
\[
{\rm p}_\g\colon \d \to \g \hs \mbox{and} \hs {\rm p}_\k\colon \d \to \k
\]
be the projections with respect to the decomposition
$\d = \g + \k$.  Define
\begin{equation}\label{eq:aG1}
\rho_\d: \, \d \rightarrow {\mathfrak{X}}^1(G), \; \rho_\d(a)|_g = -\rt_g ({{\rm p}}_\g ({\rm Ad}_g a)),
\hs a \in \d, \, g \in G.
\end{equation}

\begin{rem}\label{re:dressing}
{\rm
For the motivation of the vector fields $\rho_\d(a)$,
suppose that $(\d, \g, \k)$ is {\it complete}, i.e.,
there exist a Lie group $D$ with Lie algebra $\d$ and containing $G$ as a Lie sub-group,  and a  Lie sub-group $K$ of $D$ with Lie algebra $\k$, such that
the group multiplication of $D$ gives a diffeomorphism
$K \times G \to D$. Then $D$ acts on $G$ 
via
\[
d \cdot g = g' \hs \mbox{if} \hs gd^{-1} = kg^\prime,\hs d \in D, \, g, g' \in G, \, k \in K.
\]
It is straightforward to check that the induced $\d$-action on $G$ is given by \eqref{eq:aG1}.
\hfill $\diamond$
}
\end{rem}

When $(\d, \g, \k)$ is not necessarily complete, \eqref{eq:aG1} still defines a Lie algebra action, called the {\it dressing action}, of $\d$ on $G$, a fact
stated in \cite[$\S$2.1]{Mein}.
As the dressing action is the starting point of our discussions, we prove this fact for the convenience of the reader.

\begin{lem}\label{lem:d-G}
For any $G$-equivariant Lie algebra triple $(\d, \g, \k)$,
the map $\rho_\d: \d \rightarrow {\mathfrak{X}}^1(G)$ in \eqref{eq:aG1} is a Lie algebra homomorphism, and one has
\begin{equation}\label{eq:rho-gg}
\rho_\d({a)|_{g_1g_2}} = \rt_{g_2} (\rho_\d({\rm Ad}_{g_2} a)|_{g_1}), \hs a \in \d, \, g_1, g_2 \in G.
\end{equation}
\end{lem}

\begin{proof} Identity \eqref{eq:rho-gg} follows directly from the definition. It remains to show that $\rho_\d$ is a Lie algebra homomorphism.
Consider the right invariant Maurer-Cartan form $\theta^\rt$ on $G$, and recall from the end of $\S$\ref{s:intro} that
$d\theta^\rt + [\theta^{\rt}, \theta^{\rt}] = 0$.
For $a \in \d$, set
\[
\gamma_a = -\theta^\rt(\rho_\d(a)): \, G \rightarrow \g, \; \gamma_a(g) = -\rt_{g^{-1}} (\rho_\d(a)|_g) = {\rm p}_\g ({\rm Ad}_g a), \hs g \in G.
\]
For $V \in {\mathfrak{X}}^1(G)$ and $\gamma_1, \gamma_2 \in C^\infty(G, \d)$, let $V \cdot \gamma_1$ be the Lie derivative of
$\gamma_1$ in the direction of $V$, and let $[\gamma_1, \gamma_2] \in C^\infty(G, \d)$ be defined by point-wise Lie bracket in $\d$.
For $a, b \in \d$, it then follows from $(d\theta^\rt + [\theta^{\rt}, \theta^{\rt}])(\rho_\d(a), \rho_\d(b))=0$ that
\begin{equation}\label{eq:abG}
\theta^\rt ([\rho_\d(a), \rho_\d(b)]) = -\rho_\d(a) \cdot \gamma_b  + \rho_\d(b) \cdot \gamma_a+ [\gamma_a, \, \gamma_b].
\end{equation}
To prove $[\rho_\d(a), \rho_\d(b)] = \rho_\d([a, b])$, it thus remains to prove that
\begin{equation}\label{eq:gab}
\rho_\d(a) \cdot \gamma_b  - \rho_\d(b) \cdot \gamma_a- [\gamma_a, \, \gamma_b] = \gamma_{[a, b]} \in C^\infty(G, \g), \hs a, b \in \d.
\end{equation}
For $a \in \d$, introduce also $\delta_a: G \to \d$ and $\kappa_a: G \to \k$ by
\[
\delta_a(g) = {\rm Ad}_g a, \hs \kappa_a(g) = {\rm p}_\k({\rm Ad}_g a), \hs g \in G.
\]
Let $a, b \in \d$.
 For $g \in G$ and $u \in \g$, substituting $g' = \exp(tu)$  in
${\rm Ad}_{gg'} b = {\rm Ad}_g {\rm Ad}_{g'} b$ and
differentiating at $t = 0$, noting that $\frac{d}{dt}|_{t = 0} {\rm Ad}_{\exp(tu)} b = -[u, b]$ (see \eqref{eq:exp}), one gets
\begin{equation}\label{eq:Lieu}
({u^\lt} \cdot \delta_b)(g) = -{\rm Ad}_g [u, b] = -[{\rm Ad}_g u, \, \delta_b(g)],
\end{equation}
where recall that $u^\lt$ is the left invariant vector field on $G$ with value $u$ at $e$. Setting
\[
u = \lt_{g^{-1}} (\rho_\d(a)|_g) = -{\rm Ad}_{g^{-1}} {\rm p}_\g ({\rm Ad}_g a) = -{\rm Ad}_{g^{-1}} \gamma_a(g)
\]
in \eqref{eq:Lieu}, one gets $(\rho_\d(a) \cdot \delta_b)(g)  = [\gamma_a(g), \, \delta_b(g)]$, from where it follows that
\begin{equation}\label{eq:abG-1}
\rho_\d(a) \cdot \gamma_b = [\gamma_a, \, \gamma_b] + {\rm p}_\g [\gamma_a, \kappa_b] \hs \mbox{and} \hs
\rho_\d(a) \cdot \kappa_b = {\rm p}_\k [\gamma_a, \kappa_b].
\end{equation}
Putting \eqref{eq:abG-1} and the similar formula for $\rho_\d(b) \cdot \gamma_a$ in \eqref{eq:abG}, one gets
\[
\rho_\d(a) \cdot \gamma_b  - \rho_\d(b) \cdot \gamma_a- [\gamma_a, \, \gamma_b] = [\gamma_a, \gamma_b] + {\rm p}_\g [\gamma_a, \kappa_b] +
{\rm p}_\g [\kappa_a, \gamma_b] = {\rm p}_\g [\delta_a, \delta_b] = \gamma_{[a, b]}.
\]
\end{proof}

In view of Lemma  \ref{lem:d-G}, associated to the $G$-equivariant triple $(\d, \g, \k)$ one has
 the action Lie algebroid $\d \ltimes G$ over $G$.
We now explain the alternative description of  $\d \ltimes G$ given in \cite[Proposition 8]{Mein}.
By \eqref{eq:rho-gg}, $\d \ltimes G$ is a $G$-Lie algebroid with the $G$-action
\begin{equation}\label{eq:GdG}
g \cdot (a, g') = ({\rm Ad}_g a, \, g'g^{-1}).
\end{equation}
Let $\psi_{\g, \d}: \g \ltimes G \to \d \ltimes G$ be the Lie algebroid morphism defined by the Lie algebra inclusion $\g \to \d$. Then
$(\d \ltimes G, \psi_{\g, \d})$ is a $G$-Lie algebroid pair  over $G$.
Let $q: G \to G/G = {\rm pt}$. The pushforward $\k' :=\overline{(\d \ltimes G)}^{\psi_{\g, \d}}$ (see Lemma \ref{lem:A-bar}), being
a Lie algebroid over ${\rm pt}$, is then
a Lie algebra, and by (1) of Proposition \ref{prop:bijection}, one has an isomorphism
\[
(\d \ltimes G, \psi_{\g, \d}) \cong (q^! \k' , \psi_\bullet)
\]
of $G$-Lie algebroid pairs over $G$,
where $\psi_{\bullet}: \g \ltimes G \to q^! \k'$ is defined in Lemma \ref{lem:qB-pair}.
 We now show that as Lie algebras,
$\k'$ is identified with $\k^{\rm op}$, the vector space $\k$ with the negative of the original Lie bracket on $\k$.
Indeed, consider
the pullback Lie algebroid
$q^! \k^{\rm op}$ over $G$, which  by definition is the direct product Lie algebroid $TG \oplus \k^{\rm op}$ over $G$.
By Lemma \ref{lem:qB-pair}, one has
the $G$-Lie algebroid pair $(TG \oplus \k^{\rm op}, \phi_\bullet)$ over $G$, where
\[
\phi_\bullet: \; \g \ltimes G \rightarrow TG \oplus \k^{{\rm op}}, \; (u, g) \mapsto (-u^\lt|_g, \, 0).
\]
Define the fiber-wise surjective map
\[
\epsilon:  \;\d \ltimes G \rightarrow \k^{{\rm op}}, \; (a, \, g) \mapsto -{\rm p}_\k ({\rm Ad}_g a).
\]

\begin{lem}\label{lem:dG0} \cite[Proposition 8]{Mein}
The map $\epsilon$ is a Lie algebroid morphism. Equivalently, the map $I:  \d \ltimes G \rightarrow TG \oplus \k^{\rm op}$ given by
\begin{equation}\label{eq:Iag0}
I(a, g) = (\rho_\d(a)|_g, \, -{\rm p}_\k ({\rm Ad}_g a)) =
\left(-\rt_g ({{\rm p}}_\g ({\rm Ad}_g a)), \;  -{\rm p}_\k ({\rm Ad}_g a)\right),
\end{equation}
is an isomorphism of $G$-Lie algebroid pairs over $G$ from $(\d \ltimes G, \psi_{\g, \d})$ to $(TG \oplus \k^{\rm op}, \phi_\bullet)$.
\end{lem}

\begin{proof}
It follows from the definitions that $I \circ \psi_{\g, \d} = \phi_\bullet: \g \ltimes G \to TG \oplus \k^{\rm op}$.
It remains to show that $I: \d \ltimes G \to TG \oplus \k^{\rm op}$ is a Lie algebroid isomorphism covering the identity map of $G$.
As $I$ is a vector bundle isomorphism, we only need to show that $I$ is a Lie algebroid morphism. For $a \in \d$,
define $\kappa_a: G \to \k$, as in the proof of Lemma \ref{lem:d-G}, and $I_a \in \Gamma(TG \oplus \k^{\rm op})$ by
\[
\kappa_a(g) = {\rm p}_\k ({\rm Ad}_g a) \hs \mbox{and} \hs
I_a|_g = I(a, g) = (\rho_\d(a)|_g, \, -\kappa_a(g)).
\]
Let $a, b \in \d$. We then need to show that $I_{[a, b]} = [I_a, I_b] \in \Gamma(TG \oplus \k^{\rm op})$. By definition,
\[
[I_a, I_b] = ([\rho_\d(a), \rho_\d(b)], \, -\rho_\d(a) \cdot \kappa_b + \rho_\d(b) \cdot \kappa_a - [\kappa_a, \kappa_b]).
\]
Using the notation $\gamma_a$ in the proof of Lemma \ref{lem:d-G}, and using
\eqref{eq:abG-1},  one has
\begin{align*}
\rho_\d(a) \cdot \kappa_b - \rho_\d(b) \cdot \kappa_a + [\kappa_a, \kappa_b]& =
{\rm p}_\k [\gamma_a, \kappa_b] - {\rm p}_\k [\gamma_b, \kappa_a] + [\kappa_a, \kappa_b] \\
& = {\rm p}_\k [\gamma_a + \kappa_a, \gamma_b + \kappa_b] = \kappa_{[a, b]}.
\end{align*}
By Lemma \ref{lem:d-G}, we have $[I_a, I_b] = (\rho_\d([a b]), \, -\kappa ([a, b]) = I_{[a, b]}$.
\end{proof}

For a Lie group $K$, let $K^{\rm op}$ be the manifold $K$ with the opposite group structure.

\begin{cor}\label{cor:dG}
For any Harish-Chandra triple $(G, \d, \k)$ and any Lie group $K$ with Lie algebra $\k$, the product groupoid $K^{\rm op} \times G \times G\rightrightarrows
G$
is an integration of the action Lie algebroid $\d \ltimes G$, where $K^{\rm op}$ is regarded as a Lie groupoid over
the one point space, and $G \times G$ is the pair groupoid over $G$.
\end{cor}

\subsection{Integrations of action Lie sub-algebroids of $\d \ltimes G$}\label{ss:l-G}
Continuing with the set-up in $\S$\ref{ss:dressing}, let now $\l$ be any Lie sub-algebra of $\d$,
and let $\l \ltimes G$ be the action Lie algebroid defined by the restriction to $\l$ of the dressing action
of $\d$ on $G$. Being an action Lie algebroid, $\l \ltimes G$ is integrable by Dazord's construction \cite{Dazord}
(see also \cite[$\S$5.2]{CF1}), but in view of
Corollary \ref{cor:dG}, one can expect to construct integrations of $\l \ltimes G$ that are  sub-groupoids of
$K^{\rm op} \times G \times G \rightrightarrows G$. We now show that this is indeed the case under Assumption \ref{asp:GD} below.
We first simplify some terminology.

\begin{defn}\label{def:ST}
{\rm
For two Lie algebras $\fs$ and $\ft$, by an {\it integration of a Lie algebra homomorphism $\fs \to \ft$}
we mean a map $\phi: S \to T$, where $S$ and $T$ are two (not necessarily connected) Lie groups with respective Lie algebras $\fs$ and $\ft$,
and $\phi$ is a  Lie group homomorphism whose differential at the identity element of $S$ is the given $\fs \to \ft$. In this case, we also say that
the Lie algebra homomorphism $\fs \to \ft$ integrates to the Lie group homomorphism $\phi: S \to T$.
}
\end{defn}

\begin{defn}\label{defn:GD-inte}
{\rm A $G$-equivariant Lie algebra triple $(\d, \g, \k)$ is said to be {\it integrable} if the Harish-Chandra pair $(G, \d)$ is integrable in the sense that
there exists a Lie group homomorphism $G \to D$ integrating $\g \to \d$.}
\end{defn}

\begin{rem}\label{re:GD}
{\rm
The Harish-Chandra pair $(G, \d)$ is automatically integrable if $G$ is simply connected.
In general, suppose $G$ is connected, and let $\widetilde{G}$ be the simply connected cover of $G$ with covering map
$p: \widetilde{G} \to G$, and regard $\pi_1(G)$ as a sub-group in the center of $\widetilde{G}$.
 Let $\widetilde{D}$ be the simply connected Lie group with Lie algebra $\d$, and let $\phi: \widetilde{G} \to \widetilde{D}$
be the unique Lie group homomorphism integrating $\g \to \d$. Then  $(G, \d)$ is integrable
 if and only if $\phi(\pi_1(G))$ is a discrete sub-group of the center of
$\widetilde{D}$, and in this case $\widetilde{D}/\phi(\pi_1(G))$ is a maximal connected Lie group $D$ admitting an integration
$G \to D$ of $\g \to \d$ (c.f. \cite[Remark~1.13.6]{DK}).
\hfill $\diamond$
}
\end{rem}

\begin{assump}\label{asp:GD}
{\rm Assume that the $G$-equivariant Lie algebra triple $(\d, \g, \k)$ is integrable, and let $\phi_\sG:  G \to D, g \mapsto \bar{g}$,
be an integration of $\g \to \d$. Let $\phi_\sK: K \to D, k \mapsto \bar{k}$, be any integration of $\k \hookrightarrow \d$.
}
\end{assump}

Under Assumption \ref{asp:GD}, let
\begin{equation}\label{eq:GD-poid}
\G(D) = \{(k, g_1, g_2, d) \in K \times G \times G \times D|\, \bar{k} \bar{g}_1= \bar{g}_2d^{-1}\}.
\end{equation}
As $\G(D) \to K \times  G \times G, (k, g_1, g_2, d) \mapsto (k, g_1, g_2)$, is a diffeomorphism, $\G(D)$ has the Lie groupoid structure
isomorphic to the product
Lie groupoid $K^{\rm op} \times G \times G\rightrightarrows G$  in Corollary \ref{cor:dG}, and is thus an integration of the action Lie algebroid $\d \ltimes G$.
Specifically, the Lie groupoid structure on $\G(D) \rightrightarrows G$ is given by
\begin{align}\label{eq:GD-s1}
&\s (k,g_1,g_2,d)=g_2,\;\;\;\; \t (k,g_1,g_2,d)=g_1,\;\; \\
\label{eq:GD-s2}
&1_g=(e,g,g,e),\,\,\;\; (k,g_1,g_2,d)^{-1}\!=(k^{-1},g_2,g_1,d^{-1}),\\
\label{eq:GD-s3}
&(k_1,g_1,g,d_1)\cdot (k_2,g,g_2,d_2)=(k_2k_1,g_1,g_2,d_1d_2).
\end{align}
One can also see directly  that the
Lie algebroid $A_\sD = {\rm Ker} (\dif \s)|_{\sG}$ of $\G(D)$ is given by
\begin{equation}\label{eq:AsD}
A_\sD|_{g} = \{(-{\rm p}_{\k} \Ad _{{g}} a, -\rt_{g}  ({\rm p}_{\g} (\Ad _{{g}} a) ), 0, a): a \in \d\} \subset \k \times T_gG \times \{0\} \times \d.
\end{equation}
Recalling the dressing vector field $\rho_\d(a)$  in \eqref{eq:aG1}, one sees that the anchor of $A_\sD$ is
\[
\dif \t|_{\sA_\sD}: \; (-{\rm p}_{\k} \Ad _{{g}} a, -\rt_{g}  ({\rm p}_{\g} (\Ad _{{g}} a) ), 0, a) \mapsto
-\rt_{g}  ({\rm p}_{\g} (\Ad _{{g}} a) ) = \rho_\d(a)|_g,
\]
and one has the Lie algebroid isomorphism
 \begin{equation}\label{eq:d-AsD}
\d \ltimes G \rightarrow  A_\sD, \; (a, g)\mapsto  (-{\rm p}_{\k} (\Ad _{{g}}a), \,\rho_\d(a)|_g,\, 0, \, a).
\end{equation}

\begin{cor}\label{cor:GL0}
With Assumption \ref{asp:GD}, let $\l$ be any Lie sub-algebra of $\d$, and let $\phi_\sL: L \to D, l \mapsto \bar{l}$, be
any integration of $\l \hookrightarrow \d$. Then
\begin{equation}\label{eq:GL0}
\G(L)  :=\{ (k,g_1,g_2,l)\in K\times G\times G\times L \, | \, \bar{k}\bar{g}_1=\bar{g}_2\bar{l}^{-1} \}
\end{equation}
is a Lie sub-groupoid of $\G(D)$ via $\G(L) \to \G(D), (k, g_1, g_2, l) \mapsto (k, g_1, g_2, \bar{l})$, and $\G(L)$ is an integration of the action Lie algebroid
$\l \ltimes G$ over $G$.
\end{cor}


\subsection{Integrations of Lie algebroids associated to Harish-Chandra sub-pairs}\label{ss:BHl}

\begin{defn}\label{defn:sub-pair}
Let $(G, \d)$ be a Harish-Chandra pair. A {\it Harish-Chandra sub-pair} of $(G, \d)$
is a pair $(H, \l)$, where $H$ is a
closed Lie sub-group of $G$ with Lie algebra $\h$,  $\l$ is a Lie sub-algebra of $\d$ containing $\h$, and $\l$ is $H$-invariant
for the action $h \mapsto {\rm Ad}_h \in {\rm Aut}(\d)$.
\end{defn}

Let again $G$ be any Lie group and let $(\d, \g, \k)$ be a $G$-equivariant Lie algebra triple.
Let $(H, \l)$ be a Harish-Chandra sub-pair of $(G, \d)$. Regard $G$ as a principal $H$-bundle via the left $H$-action $h \mapsto \rt_{h^{-1}}$ on $G$, and let
 $q_\sH: G \to G/H$ be the projection. Let $(\l/\h) \times_H G$ be the quotient of the
$H$-action on $(\l/\h) \times G$ given by
\begin{equation}\label{eq:hag}
h \cdot ((a+\h), g) = ({\rm Ad}_h a + \h, \, gh^{-1}), \hs a \in \l.
\end{equation}
On the other hand, $H$ acts on the action Lie algebroid $\l \ltimes G$ by restriction of the $G$-action on $\d \ltimes G$ in \eqref{eq:GdG}, and
$(\l \ltimes G, \psi_{\h, \l})$ is an $H$-Lie algebroid pair over $G$, where
$\psi_{\h, \l}: \h \ltimes H \to \l \ltimes G$ is defined by
$\h \hookrightarrow \l$. The pushforward to $G/H$ of $\l \ltimes G$ (see Lemma \ref{lem:A-bar}) has
$(\l/\h) \times_H G$ as the underlying vector bundle and will be denoted as
\begin{equation}\label{eq:BHl}
B_{\sH, \l} := \overline{(\l \ltimes G)}^{\psi_{\\h, \l}} = (\l/\h) \times_H G.
\end{equation}


\begin{them}\label{th:inte-BHl}
Let $(H, \l)$ be a Harish-Chandra sub-pair of $(G, \d)$, and let $\phi_\sG: G \to D$ and $\phi_\sK: K \to D$ be as in
Assumption \ref{asp:GD}. Assume, in addition, that
\[
\phi_\sL: L \to D \hs \mbox{and} \hs \phi_{\sH, \sL}: H \to L
\]
are respectively integrations of $\l \hookrightarrow \d$ and $\h \hookrightarrow \l$ such that
$\phi_\sL \circ \phi_{\sH, \sL} = \phi_{\sG}|_\sH: H \to D$. Let $H \times H$ act on $\G(L)$ in \eqref{eq:GL0} by
\begin{equation}\label{eq:HHGL}
(h_1, h_2) \cdot (k, g_1, g_2, l) = (k,\,  g_1h_1^{-1}, \, g_2h_2^{-1}, \, \phi_{\sH, \sL}(h_1) l \phi_{\sH, \sL}(h_2^{-1})).
\end{equation}
Then there is a unique Lie groupoid structure on $\G(L)/(H \times H)$ over  $G/H$ such that the projection
$\G(L) \to \G(L)/(H \times H)$ is a groupoid morphism covering $q_\sH: G \to G/H$. Moreover,
the Lie algebroid of $\G(L)/(H \times H) \rightrightarrows G/H$
is isomorphic to $B_{\sH, \l}$.
\end{them}

\begin{proof}
Under the stated assumptions, one has the Lie groupoid morphism
\[
\Psi_{\sH, \sL}: \; H \ltimes G \rightarrow \G(L), \; (h, g) \mapsto (e, gh^{-1}, g, \phi_{\sH, \sL}(h)),
\]
and $(\G(L), \Psi_{\sH, \sL})$ is an $H$-Lie groupoid pair over $G$ integrating the $H$-Lie algebroid pair $(\l \ltimes G, \psi_{\h, \l})$.
Theorem \ref{th:inte-BHl} now follows from Proposition \ref{prop:quot}.
\end{proof}

\begin{rem}\label{re:H1}
{\rm
When $H$ is connected, since there is only one integration $H \to D$ of $\h \hookrightarrow \d$, the condition
$\phi_\sL \circ \phi_{\sH, \sL} = \phi_{\sG}|_\sH: H \to D$ in Theorem \ref{th:inte-BHl} is always satisfied.
\hfill $\diamond$
}
\end{rem}

We now show that the Lie algebroid $B_{\sH, \l}$ over $G/H$ is always integrable by removing  Assumption \ref{asp:GD} and those in Theorem \ref{th:inte-BHl}.

\begin{them}\label{th:inte-BHl-all}
For any Lie group $G$, any  $G$-equivariant Lie algebra triple $(\d, \g, \k)$,
and any Harish-Chandra sub-pair $(H, \l)$ of $(G, \d)$, the Lie algebroid $B_{\sH, \l} = (\l/\h) \times_H G$ over $G/H$ given in \eqref{eq:BHl}
 is integrable.
\end{them}

\begin{proof} Suppose that we have proved Theorem \ref{th:inte-BHl-all} for connected $G$.
For an arbitrary $G$,
let $G_0$ be the connected component of $G$ through the identity element. The distinct $G_0$-orbits in $G/H$, being open, connected, and
 pair-wise disjoint, are precisely all the connected components of $G/H$. Each connected component $\Gamma$ is of the form $G_0/H_{g_0}$, where
$H_{g_0} = G_0 \cap (g_0 H g_0^{-1})$ for some $g_0 \in G$. Restrict the dressing action of $\d$ on $G$ to the open subset $G_0g_0$ of $G$ and consider the
action groupoid $\d \ltimes (G_0g_0)$. Since for $a \in \d$, the dressing vector field
$\rho_\d(a)$ in \eqref{eq:aG1} satisfies
\[
\rho_\d(a)(gg_0) = \rt_{g_0} (\rho_\d({\rm Ad}_{g_0} a)|_g), \hs g \in G_0,
\]
one has the Lie algebroid isomorphism
\[
\d \ltimes (G_0g_0) \rightarrow \d \ltimes G_0, \; (a, gg_0) \mapsto ({\rm Ad}_{g_0} a, \, g), \hs g \in G.
\]
By restriction, the
$G$-equivariant Lie algebra triple $(\d, \g, \k)$ is also $G_0$-equivariant, and $(H_{g_0}, \l_{g_0})$ is a
Harish-Chandra sub-pair of $(G_0, \d)$, where $\l_{g_0} = {\rm Ad}_{g_0} \l$.  By our assumption,  $B_{\sH, \l}|_{\Gamma} \cong
B_{\sH_{g_0}, \l_{g_0}}$ is integrable. By taking the union over connected components, one sees that $B_{\sH, \l}$ is integrable.

We may thus assume that $G$ is connected. Let $\widetilde{G}$ be the simply connected cover of $G$, and let $p: \widetilde{G} \to G$ be
the projection. Let $\widetilde{H} = p^{-1}(H) \subset \widetilde{G}$, so that $\widetilde{G}/\widetilde{H} \to G/H,
\tilde{g}\widetilde{H} \mapsto p(\tilde{g})H$, is a diffeomorphism.
 By regarding $(\d, \g, \k)$ as a $\widetilde{G}$-equivariant Lie algebra triple and
$(\widetilde{H}, \l)$ as a Harish-Chandra sub-pair of $(\widetilde{G}, \d)$, and by noting that $B_{\sH, \l} \cong
B_{{\scriptscriptstyle \widetilde{H}}, \l}$, we may assume that $G$ is
simply connected.

Let now $H_0$ be the connected component of $H$ through the identity element, and consider the Lie algebroid $B_{\sH_0, \l}$ over
$G/H_0$. If $B_{\sH_0, \l}$ is integrable, then the action of $H/H_0$ on $B_{\sH_0, \l}$ lifts to an action on its
 source-simply-connected integration whose quotient by $H/H_0$ is an integration of $B_{\sH, \l}$. We may thus assume that $H$ is connected.

If $G$ simply connected and $H$ connected, then all the assumptions in Theorem \ref{th:inte-BHl} are satisfied, and the construction in
Theorem \ref{th:inte-BHl} gives explicit integrations of $B_{\sH, \l}$.
\end{proof}

\subsection{Integrations of homogeneous spaces of ${\mathcal{LA}}^\vee$-Lie groups}\label{ss:LA-vee}
Let again $G$ be any Lie group and $(\d, \g, \k)$ a $G$-equivariant Lie algebra triple.
Consider now the action Lie algebroid
$E := \k \ltimes G$.
Note that
\[
g \cdot \xi = {\rm p}_\k ({\rm Ad}_g \xi), \hs g \in G, \, \xi \in \k,
\]
defines a left action of $G$ on $\k$. Regarding the $G$-action on $\k$ as one on $\k^{\rm op}$, one then has the action Lie groupoid
$E \rightrightarrows \k^{\rm op}$, which is
 an ${\mathcal{LA}}$-groupoid \cite[$\S$4]{Mein}
via
\begin{equation}\label{eq:LA-EG}
\xymatrix{
E \ar[d] \ar@<0.5ex>[r]  \ar@<-0.5ex>[r]   &\k^{\rm op}\ar[d]\\
G \ar@<0.5ex>[r]  \ar@<-0.5ex>[r] &{\rm pt}.}
\end{equation}
The pair $(G, E)$ is an {\it ${\mathcal{LA}}^\vee$-Lie group} \cite[page 1094]{Mein}, i.e.,
a vacant $\mathcal{LA}$-groupoid with double base being a point.
Moreover, every Lie algebroid $E$ on $G$ making
$(G, E)$ into an ${\mathcal{LA}}^\vee$-Lie group arises in this way \cite[top of page 1101]{Mein}. Furthermore,
for each Harish-Chandra sub-pair $(H, \l)$ of $(G, \d)$, the pair $(G/H, B_{\sH, \l})$ is a {\it homogeneous
space of the ${\mathcal{LA}}^\vee$-Lie group $(G, E)$}, and all homogeneous spaces of $(G, E)$ arise this way
by \cite[Proposition 9]{Mein}. We thus have the following reformulation of Theorem \ref{th:inte-BHl-all}.

\begin{them}\label{th:inte-LAvee}
Every homogeneous space of any ${\mathcal{LA}}^\vee$-Lie group is integrable.
\end{them}

\begin{rem}\label{re:Daniel-2}
{\rm
{\it Dirac Lie groups} as defined in \cite{L-B-M:Dirac} are examples of ${\mathcal{LA}}^\vee$-Lie groups \cite[page 1097]{Mein}.
Theorem \ref{th:inte-LAvee} for Dirac Lie groups has also been proved in \cite{Alv}. The proof in \cite{Alv}, however, uses
Dazord's integrations of action Lie algebroids by means of holonomy groupoids, while ours uses the explicit integrations $\G(L)$
which involve only integrations of (finite dimensional) Lie algebras into
Lie groups.
\hfill $\diamond$
}
\end{rem}

We now turn to integrations of the ${\mathcal{LA}}^\vee$-Lie group $(G, E)$.
Taking $L = K$, the Lie groupoid $\G(K) \rightrightarrows G$ integrates the Lie algebroid $E = \k \ltimes G$ by Corollary \ref{cor:GL0}.
Now $\G(K)$ also has a Lie groupoid structure over $K^{\rm op}$,
with source and target maps
\[
(k_1, g_1, g_2, k_2) \mapsto k_2, \hs (k_1, g_1, g_2, k_2) \mapsto k_1^{-1},
\]
and multiplication $(k_1, g_1, g_2, k_2)(k_2^{-1}, g_1^\prime, g_2^\prime, k_2^\prime) =(k_1, g_1g_1^\prime, g_2g_2^\prime, k_2^\prime)$.
By \cite[Theorem 5.2]{Mac-Mokri}, these two groupoid structures on $\G(K)$ make it into
a {\it double Lie groupoid} \cite{Mac0}
\begin{equation}\label{eq:GKG}
\xymatrix{
\G(K) \ar@<0.5ex>[d] \ar@<-0.5ex>[d] \ar@<0.5ex>[r]  \ar@<-0.5ex>[r]   &K^{\rm op}\ar@<0.5ex>[d] \ar@<-0.5ex>[d]\\
G\ar@<0.5ex>[r]  \ar@<-0.5ex>[r] &{\rm pt}
}
\end{equation}
integrating the ${\mathcal{LA}}$-groupoid in \eqref{eq:LA-EG}.

The Lie groupoids $\G(L)$ in \eqref{eq:GL0} has a natural left action of $\G(K) \rightrightarrows K^{\rm op}$
along the map $J: \G(L)\to K^{\rm op}$, $J(k, g_1, g_2, l) \mapsto k^{-1}$, given by $\kappa: \G(K)\times_{K^{\rm op}}\G(L) \to \G(L)$,
$$
\kappa((k_1, g_1, g_2, k_2),(k_2^{-1}, g_1', g_2', l))= (k_1, g_1 g_1', g_2 g_2', l).
$$
As the source map of $\G(K)\rightrightarrows K^{\rm op}$
is a Lie groupoid morphism from $\G(K) \rightrightarrows G$ to $K^{\rm op}\rightrightarrows {\rm pt}$, and as
 ${J}$ is a Lie groupoid morphism from $\G(L) \rightrightarrows G$
to $K^{\rm op}\rightrightarrows {\rm pt}$,
the fibered product  $\G(K)\times_{K^{\rm op}} (\G(L)$ is a Lie groupoid over $G\times G$.
With respect to this Lie groupoid structure, the action map $\kappa$
is a groupoid morphism
covering the multiplication map $G\times G \to G$ and
defines an action of the double Lie groupoid $\G(K)$ on the Lie groupoid $\G(L)$
in the sense of \cite[Definition 1.5]{BrownMackenzie}.

 Let now $(H, \l)$ be a Harish-Chandra sub-pair of $(G, \d)$, and consider the Lie groupoid  $\G(L) /(H \times H) \rightrightarrows G/H$
in Theorem \ref{th:inte-BHl}. Then the map $J$ gives rise to
\[
\overline{J}: \; \G(L) /(H \times H)\rightarrow K^{\rm op}, \;  [k, g_1, g_2, l] \mapsto k^{-1},
\]
which is still a morphism of Lie groupoids, and $\G(K)\times_{K^{\rm op}} (\G(L)/(H \times H))$ is a Lie groupoid over $G\times (G/H)$. The action $\kappa$ descends to
an action
\begin{align*}
\overline{\kappa}:\; \;&\G(K)\times_{K^{\rm op}} (\G(L)/(H \times H)) \rightarrow \G(L)/(H \times H), \\
&  ((k_1,g_1,g_2,k_2), \; [k_1^\prime ,g_1^\prime ,g_2^\prime,l]) \mapsto
[k_1,g_1g_1^\prime, g_2g_2^\prime,l] \;\; \mbox{when}\;\; k_2^{-1} = k_1^\prime,
\end{align*}
which is a groupoid morphism
covering the action map $G\times (G/H) \to G/H$ and
defines an action of the double Lie groupoid $\G(K)$ on the Lie groupoid $\G(L)/(H \times H)$ in the same sense as before. Thus $\overline{\kappa}$ is
an integration of the action of the ${\mathcal{LA}}^\vee$-Lie group $(G, E)$ on its homogeneous space $(G/H, B_{\sH, \l})$.

\section{Integrations of Dirac structures via pullbacks}\label{s:Dirac}
Let $q: M \to M/H$ be any principal bundle. After recalling some basic facts from Dirac geometry, we prove in this section some
general results relating integrations of Dirac structures on $M/H$ with that
 of their pullback Dirac structures on $M$ by $q$.

\subsection{Dirac structures and pre-symplectic groupoids}\label{ss:kernel}
We will now recall some facts about Dirac manifolds
\cite{villa, courant} and their integrations \cite{BCWZ}.

For a smooth manifold $M$, the vector bundle $\ttt M:=TM\oplus T^*M$
has a non-degenerate fiber-wise bilinear form given at
each $x\in M$ by
\begin{equation}\label{eq:lara-tttM}
\langle (X,\alpha ),(Y,\beta )\rangle =\beta (X) + \alpha (Y),
\end{equation}
and a bracket $\lcf \cdot ,\cdot \rcf$ on $\Gamma (\ttt M)$, called
the {\it Courant-Dorfman bracket}, given by
\begin{equation}\label{eq:Dorfman}
\lcf (X,\alpha ),(Y,\beta )\rcf = ([X,Y], \Lie_\sX \beta - i_\sY\dif \alpha ),
\end{equation}
where $\Lie_\sX$ stands for Lie derivative by $X$.
Denote by $\pr_\sT\colon \ttt M \to TM$ and $\pr _{\sT^*}\colon \ttt
M\to T^*M$ the canonical projections. The data $(\ttt M, \langle \cdot,\cdot \rangle, \lcf \cdot,\cdot \rcf, \pr_\sT)$
is the motivating example for the general notion of {\em Courant algebroid}, introduced in \cite{LWX} (see also \cite{L-B-M}).

A  {\it Dirac structure} on $M$ is a vector sub-bundle $E\subset
\ttt M$ which is {\em lagrangian} with respect to $\langle \cdot ,\cdot
\rangle$ and {\em involutive} with respect
to $\lcf \cdot ,\cdot \rcf$.
Prototypical examples of Dirac structures include closed
2-forms $\omega\in \Omega^2(M)$ (in which case
$E$ is the graph of the map $TM\to
T^*M$, $X\mapsto i_\sX\omega$) and Poisson structures $\pi\in
\mathfrak{X}^2(M)$ (in which case $E$ is the graph of $\pi^\sharp: T^*M\to TM$, $\alpha\mapsto i_\alpha\pi$).
A key fact is that, for any Dirac structure $E$ on $M$, the vector bundle
$E\to M$ acquires the structure of a Lie algebroid, with anchor
$\rho_\sE=\pr_\sT|_E:  E\to TM$
and Lie bracket on $\Gamma(E)$ given by the restriction of $\lcf
\cdot ,\cdot \rcf$. Dirac structures in the context of general Courant algebroids are defined analogously
\cite{LWX}.

The {\it kernel} of a Dirac structure $E$ on $M$ is the
(generalized) distribution defined by
$$
\Ker(E):=E\cap TM\subseteq TM.
$$
A Dirac structure $E$ is (the graph of) a Poisson structure if and only if $\Ker(E) = \{0\}$.
For $E$ defined by a closed 2-form
$\omega$, one has $\Ker(E)=\Ker(\omega)$.

Given Dirac manifolds $(M_1,E_1)$ and
$(M_2,E_2)$, a map $\varphi: M_1\to M_2$ is called a (forward) {\it Dirac
map} if
$$
E_2 |_{\varphi(x)} = \{(\dif \varphi (X), \beta ) \;|\;
(X,\varphi^*\beta) \in E_1 |_x\}
$$
for all $x \in M_1$. We call $\varphi$ a {\it strong Dirac map} (see e.g. \cite{ABM,BCWZ}) if, in addition,
$$
\Ker(\dif \varphi)\cap \Ker(E_1)=\{0\}.
$$

\begin{rem}\label{rem:action}
{\rm
Note that if $\varphi$ is a Dirac map, then $\Ker(E_2)|_{\varphi(x)}
= \dif \varphi (\Ker(E_1) |_x )$ for all $x \in M$. The condition that $\varphi$ be a strong
Dirac map is equivalent to
\begin{equation}\label{eq:kernelphi}
\dif \varphi : \Ker(E_1)|_x \to
\Ker(E_2)|_{\varphi(x)}
\end{equation}
be an isomorphism for all $x \in M$. A strong Dirac map $\varphi: (M_1,E_1)\to (M_2,E_2)$ gives rise to a
Lie algebroid action of $E_2$ on $M_1$, defined by the map
$$
\varphi^*E_2 \to TM_1, \; (Y,\beta)|_{\varphi(x)} \mapsto X,
$$
where $X$ is unique such that $Y=\dif \varphi(X)$ and
$(X,\varphi^*\beta) \in E_1|_x$. This map restricts to a vector-bundle map
$\varphi^*\Ker(E_2)\to \Ker(E_1)$, which is the point-wise inverse to
\eqref{eq:kernelphi}.
\hfill $\diamond$
}
\end{rem}

The fact that Poisson manifolds are infinitesimal counterparts of
symplectic groupoids \cite{We87} extends to Dirac geometry as follows. Recall
that a 2-form $\omega$ on a Lie groupoid $\G$ is called
{\it multiplicative} if
\begin{equation}\label{eq:mult}
m^*\omega =\pr _1^*\omega +\pr _2^*\omega
\end{equation}
as $2$-forms on $\G^{(2)}$, where $\pr _i\colon \G^{(2)}\to \G$, $i=1,2$, are the natural
projections. If $A = \Ker(\dif \s)|_\sM$
is the Lie algebroid of $\G$ with anchor $\rho = \dif \t|_\sA: A \to TM$, then every $\omega \in \Omega^2(\G)$ gives rise to the
vector bundle morphisms
\begin{align}\label{eq:mu-omega}
&\mu_\omega: \; A\to T^*M, \; \mu_\omega(a)=(i_a\omega) |_{TM},\\
\label{eq:delta-omega}
&\delta_\omega: \; A \to \ttt M, \; \delta_\omega(a) = (\rho(a), \, \mu_\omega(a)).
\end{align}

\begin{defn}\label{defn:pre} \cite{BCWZ}
{\rm A {\it pre-symplectic groupoid} is a Lie
groupoid $\G\rightrightarrows M$ equipped with a closed,
multiplicative 2-form $\omega\in \Omega ^2(\G)$ such that  $\mathrm{dim}(\G)=2 \mathrm{dim}(M)$ and
$\delta_\omega: A \to \ttt M$ is injective.
}
\end{defn}

Any pre-symplectic groupoid  $(\G,\omega)$ defines a Dirac structure
$E = \delta_\omega(A)$ on $M$, which is also uniquely determined by the fact that the target map $\t$
is a Dirac map \cite[Theorem ~2.2]{BCWZ}, while $\s$ is
anti-Dirac; that is, $\s$ is a Dirac map if we equip $M$
with the {\it opposite} Dirac structure
$$
{E}^{\scriptscriptstyle{op}}=\{(X,\alpha)\,|\, (X,-\alpha)\in E\}.
$$
(See Proposition \ref{prop:ker}(1) below.)  In this situation,
$\delta_\omega:A \to E$
 is an isomorphism of Lie algebroids, and
we say that $(\G,\omega)$ is an {\it integration} of the Dirac structure $E$.

 Conversely, if a Dirac structure $E$ is integrable as a Lie algebroid, its
source-simply-connected integration $\G$ carries a unique multiplicative 2-form
$\omega$ so that $(\G,\omega)$ is an integration of the Dirac
structure $E$. Moreover, if a pre-symplectic groupoid $(\G,\omega)$
integrates a Dirac structure $E$, then $E$ corresponds to a Poisson
structure if and only if $\omega$ is symplectic, in which case $\t$
is a Poisson map. For details on the relation between
pre-symplectic groupoids and Dirac structures, see
\cite[$\S$2.4]{BCWZ}.

Let $(\G, \omega)$ be an integration of a Dirac structure $E$ on $M$.
Analogous to the Lie algebroid isomorphism $\delta_\omega: A \to E$,  there is an
isomorphism of Lie algebroids
\begin{equation}\label{eq:identif2}
\delta_\omega^{\scriptscriptstyle{op}}: \; A^{\scriptscriptstyle{op}} \to {E^{\scriptscriptstyle{op}}}, \;\; \; a\mapsto
(\rho^{\scriptscriptstyle{op}}(a),\mu_\omega^{\scriptscriptstyle{op}}(a)),
\end{equation}
where $\mu_\omega^{\scriptscriptstyle{op}}: A^{\scriptscriptstyle{op}} \to
T^*M$ is given by $a\mapsto (i_a\omega)|_{TM}$. Upon the
identifications $A\cong E$ and $A^{\scriptscriptstyle{op}}\cong {E^{\scriptscriptstyle{op}}}$,
the isomorphism $\mathrm{inv}: A \to
A^{\scriptscriptstyle{op}}$ becomes $E\to {E^{\scriptscriptstyle{op}}},
(X,\alpha)\mapsto (X,-\alpha)$.
We will often assume these identifications.

Our main result in this section is the following explicit relation between the kernel of a Dirac structure and the kernel of an integrating pre-symplectic form.

\begin{prop}\label{prop:ker}
Let $(\G,\omega )$ be a pre-symplectic groupoid integrating a Dirac
structure $E$ on $M$, and let $M^{\scriptscriptstyle{op}}$ denote $M$ equipped with the opposite Dirac structure. Then

(1) the map $(\t,\s)\colon \G \to M\times
M^{\scriptscriptstyle{op}}$ is  a strong Dirac map;

(2) under the identifications $\delta_\omega: A \stackrel{\sim}{\to} E$ and
$\delta_\omega^{\scriptscriptstyle{op}}:  A^{\scriptscriptstyle{op}} \stackrel{\sim}{\to} E^{\scriptscriptstyle{op}}$,
 one has
\begin{equation}\label{eq:kernel:omega}
\Ker (\omega)|_g = \{ a^\rt +  \mathrm{inv}(b)^\lt \, \mid \, a \in
\Ker(E)|_{\t(g)}, b\in \Ker(E)|_{\s(g)}\}, \;\;\; g \in \G.
\end{equation}
\end{prop}

\begin{pf} Let $g\in \G$ with $\s(g)=x$ and $\t(g)=y$.
For $a\in A|_y$ and $b\in A^{\scriptscriptstyle{op}}|_x$, recall that $\dif \t (a^\rt)|_g=\rho(a)|_{y}$
and $\dif \s (b^\lt)|_g=\rho^{\scriptscriptstyle{op}}(b)|_{x}$, and that condition \eqref{eq:mult}
implies that
\begin{equation}\label{eq:id1}
i_{a^\rt}\omega =\t^\ast (\mu_\omega(a)),\qquad i_{b^\lt}\omega =\s^\ast (\mu_\omega^{\scriptscriptstyle{op}}(b)),
\end{equation}
see e.g. \cite[$\S$3]{BCWZ}.
Consider the identifications $A \to E$ and $A^{\rm op} \to E^{\rm op}$, and let $a=(X,\alpha) \in E|_y$, $b=(Y,\beta) \in E^{\scriptscriptstyle{op}}|_x$.
Then $Z= (a^\rt + b^\lt)|_g$ satisfies $(\dif \t, \dif \s)(Z)=(X,Y)$ and
$$
i_{\sZ}\omega = i_{a^\rt}\omega + i_{b^\lt}\omega =
\t^*\alpha + \s^*\beta = (\t,\s)^* (\alpha,\beta),
$$
which shows that $(\t,\s)\colon \G \to M\times M^{\scriptscriptstyle{op}}$ is a Dirac map.
By \cite[Corollary 4.8]{BCWZ}, the injectivity of $\delta_\omega$ in Definition~\ref{defn:pre} is equivalent to the condition
\begin{equation}\label{eq:ker}
\Ker (\omega)_g\cap \Ker (\dif \s)_g \cap \Ker (\dif \t)_g=\{0\},
\end{equation}
for all $g\in \G$, indicating that $(\t, \s)$ is in fact a strong Dirac map. This proves (1).

By Remark~\ref{rem:action} we have the Lie algebroid
action of $E \times E^{\scriptscriptstyle{op}}$ on $\G$ given by
\begin{equation}\label{eq:LAaction}
\t^*E
\times \s^*E^{\scriptscriptstyle{op}} \to T\G, \;\;\;\; (a,b)\mapsto a^\rt + b^\lt,
\end{equation}
and its restriction defines an isomorphism
$\Ker (E)|_{\t(g)}\times \Ker (E^{\scriptscriptstyle{op}})|_{\s(g)} \to \Ker (\omega)|_{g}$, for all $g \in \G$. Using the identification
$\mathrm{inv}: E\to E^{\scriptscriptstyle{op}}$, \eqref{eq:kernel:omega} follows. This proves (2).
\end{pf}

\subsection{Integrations of Dirac structures via pullbacks}\label{ss:PP-Dirac}
Let now $H$ be a Lie group with Lie algebra $\h$, and consider an $H$-principal bundle
$q: M \to M/H$ as in $\S$\ref{s:PP}. Recall
the action Lie algebroid $\h \ltimes M$ over $M$ with anchor
$\rho_\sM:  \mathfrak{h} \ltimes M\rightarrow TM$, and note that
\[
{\rm Im}(\rho_\sM) = \Ker (\dif q: TM \to T(M/H)).
\]
Let $F$ be an arbitrary Dirac structure on $M/H$. The pullback
of $F$ by $q$  (see, e.g., \cite{villa}) is the Dirac structure given by
\begin{equation}\label{eq:EF}
E  = q^!F = \{(X, q^*\beta)|\,(\dif q(X), \beta) \in F\} \subset \ttt M.
\end{equation}
Let $H$ act on $\ttt M$ by tangent and co-tangent lifts.
Part (1) of the following Lemma \ref{lem:E-F} is proved in
\cite[Lemma 6.5]{MeinLectures}, and Part (2) follows from the definitions. See also \cite[$\S$5]{villa}.

\begin{lem}\label{lem:E-F}
(1) The assignment $F \mapsto E =  q^! F$ is a one-to-one correspondence from Dirac structures on $M/H$ and $H$-invariant Dirac structures
on $M$ satisfying
\begin{equation}\label{eq:rho-kerE}
{\rm Im} (\rho_\sM) \subset \Ker (E).
\end{equation}
 Under this correspondence, $F$ is the {\it pushforward} of $E$, i.e.,
\[
F = q_!E :=\{(\dif q(X), \alpha)| (X, q^*\alpha) \in E\} \subset \ttt (M/H),
\]
and one has $\Ker (F) = \dif q (\Ker (E))$ and $\Ker (E) = (\dif q)^{-1}(\Ker(F))$. In particular, $F$ is the graph of a Poisson structure on $M/H$ if and only if
$E = q^!F$ is an $H$-invariant Dirac structure on $M$ such that ${\rm Ker}(E) = {\rm Im} (\rho_\sM)$.

(2) For $i = 1, 2$, let $q_i: M_i \to M_i/H$ be a principal bundle,  $F_i$ a Dirac structure on $M_i/H$, and $E_i = q_i^!F_i$.
Let $f: M_1\to M_2$ be an $H$-equivariant map covering $\bar{f}: M_1/H\to M_2/H$. Then $\bar{f}: (M_i/H, F_1) \to (M_2/H, F_2)$ is a Dirac map
if and only if so is $f: (M_1, E_1) \to (M_2, E_2)$.
\end{lem}

Comparing with the definition of pullbacks of Lie algebroids in $\S$\ref{ss:PP-algebroids},  it is clear that for a Dirac structure $F$ on $M/H$,
the Lie algebroid underlying the pullback Dirac structure  $E = q^! F$
is isomorphic to the pullback Lie algebroid of $F$. More specifically, $(E, \psi)$ is  an $H$-Lie algebroid pair over $M$ (see Definition \ref{defn:H-pair}),
where
\begin{equation}\label{eq:psi-E}
\psi: \; \h \ltimes M \rightarrow E, \, (u, x) \mapsto (u_\sM|_x, 0),
\end{equation}
 and the Lie algebroid morphism
$E \rightarrow F,  (X, q^*\beta) \mapsto (\dif q (X), \beta)$,
gives a Lie algebroid isomorphism $\overline{E}^\psi \to F$. As integrability of a Dirac structure
is defined as that of the underlying Lie algebroid, we immediate have the following consequence of Theorem \ref{th:inte-BHl}.

\begin{prop}\label{pr:EF-inte}
A Dirac structure $F$ on $M/H$ is integrable if and only if the $H$-Lie algebroid pair $(q^!F, \psi)$ over $M$ is integrable.
\end{prop}

We now turn to pre-symplectic groupoids integrating Dirac structures.

\begin{defn}\label{defn:admi1}
{\rm
Let $F$ be a Dirac structure on $M/H$.
A pre-symplectic groupoid $(\G, \omega)$ integrating the pullback Dirac structure $E = q^!F$ is said to be {\it $H$-admissible} if
the Lie algebroid morphism $\psi: \h \times M \to E$ in  \eqref{eq:psi-E} integrates to
a Lie groupoid morphism $\Psi: H\ltimes M \to \G$,  and
if $\omega$ is $(H\times H)$-invariant for the $(H\times H)$-action on $\G$ in \eqref{eq:double-action}.
\hfill
}
\end{defn}


\begin{them}\label{th:inte-EF}
Let $F$ be a Dirac structure on $M/H$ and $E = q^!F$ the pullback Dirac structure on $M$.
If $(\G, \omega)$ is an $H$-admissible pre-symplectic groupoid integrating $E$, then
there is a unique pre-symplectic structure $\overline{\omega}$ on the quotient Lie groupoid $\overline{\G} = \G/(H \times H)$
such that $p^*\overline{\omega} = \omega$, and  $(\overline{\G}, \overline{\omega})$ is a pre-symplectic groupoid integrating $F$.
\end{them}

\begin{pf} Note first that the $(H \times H)$-action on $\G$ induces  the $(\h\times \h)$-action on $\G$ by
\[
\rho_{\scriptscriptstyle{\G}} \colon (\h\times \h)\times \G \to T\G,\;\;\;\;
\rho_{\scriptscriptstyle{\G}} (u,v)|_g=(\psi (u,\t(g)))^\rt+(\mathrm{inv}(\psi (v,\s(g))))^\lt.
\]
As ${\rm Im}(\rho_\sM) \subset {\rm Ker}(E)$, it follows from Proposition \ref{prop:ker} that
\begin{equation}\label{eq:hh-ker-0}
\rho_{\scriptscriptstyle{\G}} (\h\times\h)\subset \Ker (\omega).
\end{equation}
The $(H\times H)$-invariance of $\omega$ then implies that
$\omega$ is $(H \times H)$-basic, i.e., $\omega = {p}^*\overline{\omega}$ for a unique
2-form $\overline{\omega}$ on {$\overline{\G}$}. Since $p: \G\to \overline{\G}$ is a groupoid morphism, $\overline{\omega}$ is multiplicative.
One also has ${\rm dim}(\G/(H \times H)) = 2 {\rm dim} (M/H)$.
Finally, let $\overline{A}$ be the Lie algebroid of $\overline{\G}$, and consider the corresponding map
$\delta_{\overline{\omega}}: \overline{A}\to \mathbb{T}(M/H)$. One may verify that
$\Ker(\delta_\omega)=\{0\}$ implies that $\Ker(\delta_{\overline{\omega}}) =\{0\}$. Indeed,
suppose $\overline{a}\in \Ker(\delta_{\overline{\omega}})$, so $\overline{\rho}(\overline{a}) = \dif q (\rho(a))=0$ and
$q^*\mu_{\overline{\omega}}(\overline{a})=\mu(a)=0$, where $a$ is any such that $\overline{a}=\dif p (a)$.
Then $\rho(a)= u_M = \rho(\psi(u))$, so we can modify $a$ to $a'= a-\psi(u)$ such that $\dif p (a')=\overline{a}$,
and $\mu(a')=0$ and $\rho(a')$=0, which gives $a'=0$, and thus $\overline{a}=0$.
We conclude that $(\overline{\G}, \overline{\omega})$ is a pre-symplectic groupoid integrating $(M/H, F)$.
\end{pf}

\begin{rem}\label{rem:H-conneced}
{\rm
For any pre-symplectic groupoid $(\G, \omega)$ integrating $E = q^!F$  and a Lie groupoid morphism $\Psi: H\ltimes M \to \G$ integrating
$\psi: \h \times M \to E$ in  \eqref{eq:psi-E},
it follows from $\dif \omega = 0$ and \eqref{eq:hh-ker-0} that $\Lie_{\rho_{\scriptscriptstyle{\G}} (u,v)} \omega =0$ for all $(u, v) \in \h \times \h$.
If $H$ is connected, then the condition that
 $\omega$ be
$(H \times H)$-invariant is automatically satisfied.
\hfill $\diamond$
}
\end{rem}

\begin{cor}\label{co:pi-inte}
Let $\pi$ be a Poisson structure on $M/H$ and $E = q^!{\rm gr}(\pi)$ the pullback Dirac structure on $M$.
If $(\G, \omega)$ is an $H$-admissible pre-symplectic groupoid integrating $E$, then
there is a unique symplectic structure $\overline{\omega}$ on the quotient Lie groupoid $\overline{\G} = \G/(H \times H)$
such that $p^*\overline{\omega} = \omega$, and  $(\overline{\G}, \overline{\omega})$ is a symplectic groupoid integrating $(M/H, \pi)$.
\end{cor}

\section{Every Poisson homogeneous space is integrable}\label{s:homog-G}
Following \cite{MeinLectures}, we review in $\S$\ref{ss:Poi-Dirac} and $\S$\ref{ss:affine-E} classifications of
Poisson homogeneous spaces in terms of equivariant Manin triples (of Lie algebras) and Harish-Chandra sub-pairs. Applying  Theorem
\ref{th:inte-BHl-all}, we prove in $\S$\ref{ss:main-inte} the main theorem stated in the Introduction that every Poisson homogeneous space is
integrable.

\subsection{Poisson Lie groups and equivariant Manin triples}\label{ss:Poi-Dirac}
Recall that a {\em quadratic Lie algebra} is a Lie
algebra together with a non-degenerate symmetric ad-invariant bilinear form.
Let $G$ be any Lie group, not necessarily connected or simply connected.

\begin{defn}\label{defn:G-Manin} \cite[Definition 5.10]{MeinLectures}
{\rm
A {\it $G$-equivariant Manin triple} is a triple
\[
((\d, \lara), \g, \k),
\]
 where $(\d, \g, \k)$ is a $G$-equivariant Lie algebra triple
as in Definition \ref{defn:G-triples}, and $(\d, \lara)$ is a quadratic Lie algebra such that $\lara$ is
$G$-invariant and both $\g$ and $\k$ are lagrangian with respect to $\lara$. The decomposition $\d = \g + \k$ is also called a {\it lagrangian splitting} of $\d$.
}
\end{defn}

Recall that a Poisson structure $\pi_\sG$ on $G$ is said to be {\it multiplicative} if
the group multiplication map $(G, \pi_\sG) \times (G, \pi_\sG) \to (G, \pi_\sG)$ is Poisson, and in such a case
the pair $(G, \pi_\sG)$ is called a {\it Poisson Lie group} \cite{Drinfeld2}.
Let $\pi\sG$ be a multiplicative Poisson structure on $G$. Then $\pi_{\sG}(e)=0$, and the linearization of $\pi_\sG$ at $e$, i.e.,
\begin{equation}\label{eq:delta-s}
\delta_*: \g \to \wedge^2 \g, \;\; u\mapsto (\Lie_{u^\lt} \pi_\sG)|_e =
\frac{d}{d\epsilon}\Big|_{\epsilon =0} \rt_{\exp(-\epsilon u)}(\pi_{\sG}|_{\exp(\epsilon u)}),
\end{equation}
defines a Lie bracket $[\cdot,\cdot]_{\g^*}$ on $\g^*$ via
$\delta_*(u)(\xi,\eta)=-u([\xi,\eta]_{\g^*})$, where $u \in \g$ and $\xi,\eta\in \g^*$.
The pair  $(\g, \delta_*)$ is called the {\it Lie bialgebra} \cite{Drinfeld} of the Poisson Lie group
$(G, \pi_\sG)$.
Let $\d = \g \oplus \g^*$ be the direct sum vector space, equipped with the bilinear form
\begin{equation}\label{eq:bilinear:d}
\la u + \xi, \;w + \eta \ra  = \eta(u) + \xi(w),
\end{equation}
Then there is a unique Lie bracket on $\d$ making $(\d, \lara)$ into a quadratic Lie algebra and such that
both $\g$ and $\g^*$ are Lie sub-algebras of $\d$.
Furthermore, by \cite{Drinfeld2}, the adjoint action
 of $\g$ on $\d$ integrates to
an action of $G$ on $\d$
via
\begin{equation}\label{Gond}
\Ad_g (u + \xi) : = \Ad_{g} u + i_{\Ad_{g^{-1}}^{*} \xi} (\rt_{g^{-1}} (\pi_{\sG}|_g)) +\Ad_{g^{-1}}^{*} \xi,
\end{equation}
where $g \mapsto \Ad_{g^{-1}}^{*}$ is  the  co-adjoint representation of $G$ on $\g^*$.
 It follows from \eqref{Gond} and $\pi_\sG$ being a bi-vector field on $G$ that $\lara$ is $G$-invariant.
Thus $((\d,\lara), \g, \g^*)$ is a $G$-equivariant Manin triple.
Using \eqref{Gond}, the Poisson structure $\pi_\sG$ on $G$ can be recovered from the
$G$-equivariant Manin triple $((\d, \lara), \g, \g^*)$ as
\begin{equation}\label{eq:piG-D0}
\pi_{\sG}|_g(\xi_1^\lt, \xi_2^\lt) =
\langle {\rm p}_\g (\Ad_{{g}}\xi_1), \, {\rm p}_{\g^*} (\Ad_{{g}}\xi_2)\rangle, \hs \xi_1, \xi_2 \in \g^*,
\end{equation}
where $\xi^\lt$ for $\xi \in \g^*$ is the left invariant $1$-form
 on $G$ with value $\xi$ at the identity element of $G$, and
${\rm p}_\g: \d \to \g$ and ${\rm p}_{\g^*}: \d \to \g^*$ are the projections with respect to the decomposition $\d = \g + \g^*$.
Note that  \eqref{eq:piG-D0} gives
\begin{equation}\label{eq:piGsharp}
\pi_\sG^\#(\xi^\lt)|_g = \rt_g (\pr_\g {\rm Ad}_g \xi), \hs g \in G, \, \xi \in \g^*.
\end{equation}
For a given Lie group $G$, the assignment
$(G, \pi_\sG) \mapsto ((\d, \lara), \g, \g^*)$
 is thus a
one-to-one correspondence from Poisson Lie group structures on $G$ to $G$-equivariant Manin triples, with the inverse correspondence
given via \eqref{eq:piG-D0}. For a proof of this fact using Dirac geometry, see \cite[Theorem 5.12]{MeinLectures} (proved in $\S$6.1 therein).

\subsection{Affine Dirac structures and Poisson homogeneous spaces}\label{ss:affine-E}
For a Poisson Lie group  $(G, \pi_\sG)$ with the corresponding $G$-equivariant Manin triple $((\d, \lara), \g, \g^*)$, we now recall from \cite{MeinLectures}
an alternative presentation
of the Courant algebroid $\ttt G$. Note first that for any Poisson structure $\pi_\sM$ on any manifold $M$, one has
the splitting
\[
\ttt M = TM \oplus {\rm gr} (\pi_\sM) = \{(X + \pi_\sM^\#(\alpha), \alpha): (X, \alpha) \in \ttt M\}.
\]
When applied to the Poisson Lie group $(G, \pi_\sG)$ and using left trivializations of both $TG$ and $T^*G$,
one arrives at the vector bundle isomorphism
\begin{equation}\label{eq:I}
{\bf e}: \;\d \times G \stackrel{\cong}{\longrightarrow} \ttt G,\;\;\; {\bf e}(u+\xi, g) = - (u^\lt +\pi_{\sG}^\sharp(\xi^\lt), \, \xi^\lt)|_g,\hs u \in \g, \, \xi \in \g^*.
\end{equation}
On the other hand, by \eqref{eq:aG1} and \eqref{eq:piGsharp} one has the dressing action of $\d$ on $G$ given by
\begin{equation}\label{eq:dress}
\rho_\d: \d \to \mathfrak{X}^1(G),\;\;\; \rho_\d(u+\xi)  = -(u^\lt + \pi_{\sG}^\sharp(\xi^\lt)).
\end{equation}
One checks directly using \eqref{eq:piG-D0} that the stabilizer sub-algebra of the dressing action at $g \in G$ is ${\rm Ad}_{g^{-1}} \g^*$
which is a lagrangian Lie sub-algebra of $(\d, \lara)$.
By \cite{L-B-M}, the trivial vector bundle $\d \times G$ thus has the structure of an {\it action Courant algebroid} over $G$,
with anchor given by the dressing action of $\d$ on $G$ and the Lie bracket on $\Gamma(\d \times G)$ uniquely
determined by the Lie bracket on $\d$ identified with the space of constant sections.  We denote by $\d_\sG$ the vector bundle $\d \times G$ with the
action Courant algebroid structure.

\begin{lem}\label{lem:d-tttG}
\cite[Lemma 6.4]{MeinLectures}
For any Poisson Lie group $(G, \pi_\sG)$, the map  ${\bf e}$ in \eqref{eq:I} is a $G$-equivariant isomorphism of Courant algebroids, where
$G$ acts on $\ttt G$ by tangent and co-tangent  lifts and on $\d_\sG$ by
$g \cdot (a, g') = ({\rm Ad}_g a, \, g'g^{-1})$.
\end{lem}

 It follows from Lemma \ref{lem:d-tttG} that each
lagrangian Lie sub-algebra $\l$ of $(\d, \lara)$ gives rise to a Dirac structure ${\bf e}(\l_\sG) \subset \ttt G$ on $G$, where
\[
\l_{\sG}: = \l\times G \subset \d_\sG,
\]
and the Lie algebroid underlying the Dirac structure ${\bf e}(\l_\sG)$ is isomorphic to the action Lie algebroid $\l \ltimes G$
defined by the restriction of the dressing action to $\l$. In particular,
\[
{\bf e}(\g_\sG) = TG \hs \mbox{and} \hs
{\bf e}((\g^*)_\sG) = {\rm gr}(\pi_\sG).
\]
To characterize the Dirac structures on $G$ that are of the form ${\bf e}(\l_\sG)$ for a lagrangian Lie sub-algebra of $(\d, \lara)$,
consider a second action of $G$ on $\d_\sG$ given by
\[
g \bullet (a, \, g') = (a, \, gg'),\hs g, g' \in G, \, a \in \d.
\]
Denote also by $\bullet$ the unique $G$-action on $\ttt G$ such that ${\bf e}: \d_\sG \to \ttt G$ is $G$-equivariant. Explicitly, one has, by definition,
\[
g \bullet \left((u^\lt + \pi^\#_\sG(\xi^\lt), \;\xi^\lt)|_{g'}\right) = (u^\lt + \pi^\#_\sG(\xi^\lt), \;\xi^\lt)|_{gg'} , \hs g, g' \in G, \, u \in \g, \xi \in \g^*.
\]
By \eqref{eq:rho-gg}, the dressing vector fields $\rho_\d(u+\xi) =-(u^\lt + \pi^\#_\sG(\xi^\lt))$ satisfy
\begin{equation}\label{eq:rho-d-gg}
\rho_\d(u+\xi)|_{gg'} = \rt_{g'} \left(\rho_\d({\rm Ad}_{g'}(u + \xi))|_g\right), \hs  u \in \g, \xi \in \g^*, g, g' \in G.
\end{equation}
Writing an arbitrary element in $\ttt_{g'} G = T_{g'} G \oplus T_{g'}^* G$ as $(v_{g'}, \alpha_{g'})$, one has, by \eqref{eq:rho-d-gg},
\begin{equation}\label{eq:gg-E}
g \bullet (v_{g'}, \,\alpha_{g'})  = \left(\lt _g v_{g'} + \rt_{g'} \left(\pi_\sG^\#((r_{g'}^*\alpha_{g'})^\lt)|_g\right), \; \lt_{g^{-1}}^* \alpha_{g'}\right)
\in \ttt_{gg'} G.
\end{equation}
Note that the $\bullet$-action
 is the unique action of $G$ on $\ttt G$ that leaves invariant the splitting $\ttt G = TG \oplus {\rm gr} (\pi_\sG)$ and such that
its restriction to $TG$ is the tangent lift of the action of $G$ on itself by left multiplication,  and its
restriction to ${\rm gr}(\pi_\sG)$ is, via the isomorphism
\[
{\rm gr}(\pi _\sG) \cong T^*G, \; (\pi_\sG^\#(\alpha), \alpha) \mapsto \alpha,
\]
the co-tangent lift of the same action.
We refer to \cite[$\S$6.1]{MeinLectures} for the original definition of the $\bullet$-action of $G$ on $\ttt G$.

The following Lemma \ref{lem:GE} is a special case of \cite[Theorem 22]{Mein}. We outline a direct proof of this result,
as \cite[Theorem 22]{Mein} deals with Dirac Lie groups and has a more involved proof.

\begin{lem}\label{lem:GE} For a Poisson Lie group $(G, \pi_\sG)$ and a Dirac structure $E$ on $G$, the following statements are equivalent:

(1) The group multiplication
$m:  (G,\pi_{\sG}) \times (G,E) \to (G,E)$
 is a Dirac map;

(2)  $E \subset \ttt G$ is $G$-invariant for the $\bullet$-action of $G$ on $\ttt G$;

(3) $E= {\bf e}(\l_{\sG})$ for a lagrangian Lie sub-algebra $\l$ of $(\d, \lara)$; in this case,
$\l = E|_e$.
\end{lem}

\begin{proof}
The equivalence between (2) and (3) follows from the $G$-equivariance of the Courant algebroid isomorphism
${\bf e}: \d_\sG \to \ttt G$ for the $\bullet$-actions of $G$.
To prove the
equivalence of (1) and (2), we note by a direct calculation that
 $m:  (G,\pi_{\sG}) \times (G,E) \to (G,E)$ is a Dirac map
if and only if
\[
E|_{gg'} = \left\{\left(\lt _g v_{g'} + \rt_{g'} \left(\pi_\sG^\#((r_{g'}^*\alpha_{g'})^\lt)\right)\Big{|}_g, \; \lt_{g^{-1}}^* \alpha_{g'}\right): \, (v_{g'}, \alpha_{g'})
\in E|_{g'}\right\}, \hs g, g' \in G,
\]
which, by \eqref{eq:gg-E}, is equivalent to $E_{gg'} = g \bullet E|_{g'}$. Thus (1) and (2) are equivalent.
\end{proof}

\begin{rem}\label{re:strong}
{\rm
Using the fact that ${\rm Ker}({\rm gr}(\pi_\sG)) = 0$, it is easy to see that
the group multiplication
$m:  (G,\pi_{\sG}) \times (G,E) \to (G,E)$
 is a Dirac map if and only if it is a strong Dirac map
 (see Remark \ref{rem:action}).
\hfill $\diamond$
}
\end{rem}

\begin{defn}\label{defn:affine0}
{\rm
For a Poisson Lie group $(G, \pi_\sG)$, a Dirac structure $E$ on $G$ is said to be $(G, \pi_\sG)$-affine
if it satisfies either one of the equivalent conditions in Lemma \ref{lem:GE}.
}
\end{defn}

Turning now to
 Poisson homogeneous spaces, let  $(G, \pi_\sG)$ be a Poisson
Lie group, and let $((\d, \lara), \g, \g^*)$ be the associated $G$-equivariant Manin triple.
For a closed Lie sub-group $H$ of $G$, let $H$ act on $G$ by $h \mapsto \rt_{h^{-1}}$ and on $\ttt G$ by tangent and co-tangent lifts.
Let  $q: G \to G/H$ be the projection, and let $G$ act on $G/H$ by $g \cdot (g'H) = gg'H$.
Recall that if $\pi$ is a Poisson structure on $G/H$ such that the action map
$(G, \pi_\sG) \times (G/H, \pi) \to G/H, \pi)$
 is Poisson, then $(G/H, \pi)$ is called a {\it Poisson homogeneous space}
of  $(G, \pi_\sG)$.

Given a Poisson homogeneous space $(G/H, \pi)$ of $(G, \pi_\sG)$, by Part (2) of Lemma \ref{lem:E-F},
 the pullback Dirac structure $E = q^!{\rm gr}(\pi)$ on $G$ satisfies (1) in Lemma \ref{lem:GE} and
is thus $(G, \pi_\sG)$-affine. Let
\begin{equation}\label{eq:lpie}
\l_\pi =(q^!{\rm gr}(\pi))_e = \{u + \xi \,|\,  u \in \g,\, \xi \in \mathrm{Ann}(\h), i_\xi
(\pi|_{q(e)}) = u + \h\} \subset \d,
\end{equation}
where $\mathrm{Ann}(\h) = \{\xi \in \g^*\,|\,\xi|_\h = 0\}$. By Lemma \ref{lem:GE}, $\l_\pi$ is a lagrangian Lie sub-algebra of $(\d, \lara)$ and
$q^!{\rm gr}(\pi) = {\bf e}((\l_\pi)_\sG)$. The following classification theorem of Drinfeld on Poisson homogeneous spaces of
$(G, \pi_\sG)$ is now a consequence of Lemma \ref{lem:E-F}. See \cite[Theorem 5.14]{MeinLectures} and  \cite[$\S$6.2]{MeinLectures} for a detailed proof.

\begin{prop}\label{prop:Hl}
 For any Poisson Lie group $(G, \pi_\sG)$, the
assignment
\[
(G/H, \pi)  \mapsto (H, \l_\pi)
\]
 is a one-to-one correspondence from Poisson homogeneous spaces $(G/H, \pi)$
of $(G, \pi_\sG)$ to Harish-Chandra sub-pairs $(H, \l)$ of $(G, \d)$ such that $\l$ is a lagrangian
Lie sub-algebra of $(\d, \lara)$ and $\l \cap \g = \h$.
\end{prop}

The lagrangian Lie sub-algebra $\l_\pi$ of $(\d, \lara)$ in \eqref{eq:lpie} is called the {\it Drinfeld lagrangian Lie sub-algebra}
associated to the Poisson homogeneous space $(G/H, \pi)$.

\subsection{Every Poisson homogeneous space is integrable}\label{ss:main-inte}
We can now prove the theorem stated in the Introduction.

\begin{them}\label{th:main-intro}
Every Poisson homogeneous space is integrable.
\end{them}

\begin{proof}
Let $(G,\pi_\sG)$ be any Poisson Lie group, and let $((\d, \lara), \g, \g^*)$ be the associated $G$-equivariant Manin triple.
Let $(G/H, \pi)$ be any Poisson homogeneous space
of $(G, \pi_\sG)$. By Lemma \ref{lem:E-F}, the co-tangent bundle Lie algebroid $T_{\pi}^*(G/H)$ over $G/H$ is isomorphic to
the Lie algebroid $B_{\sH, \l_{\pi}}$ in \eqref{eq:BHl} associated to the $G$-equivariant Lie algebra
triple $(\d, \g, \g^*)$ and
the Harish-Chandra sub-pair $(H, \l_\pi)$ of $(G, \d)$. By Theorem \ref{th:inte-BHl-all},   the Lie algebroid $T_{\pi}^*(G/H)$, and thus the Poisson
structure $\pi$, is integrable.
\end{proof}

\section{Explicit integrations}\label{s:explicit}
For a Poisson Lie group $(G, \pi_\sG)$ admitting a Drinfeld double (Definition \ref{defn:D-double}), we construct
explicit pre-symplectic groupoids for any given $(G, \pi_\sG)$-affine Dirac structures on $G$.
Pushforwards of such pre-symplectic groupoids will then be shown to give symplectic groupoids integrating
Poisson homogeneous spaces of $(G, \pi_\sG)$. We explain in $\S$\ref{s:hol} that the construction
also applies to the holomorphic setting.

\subsection{Drinfeld doubles of Poisson Lie groups}\label{ss:Drinfeld-double}
In this section, we start with an arbitrary Manin triple $((\d, \lara), \g, \k)$, i.e., a quadratic Lie algebra $(\d, \lara)$ with two lagrangian Lie sub-algebras
$\g$ and $\k$ such that $\d = \g \oplus \k$ is a vector space direct sum.  We  identify $\k \cong \g^*$ using $\lara$.
By a Lie group {\it integrating $(\d, \lara)$} we mean a (not necessarily connected) Lie group
$D$ with Lie algebra $\d$ such that the adjoint action of $D$ on $\d$ preserves $\lara$. Note that the $\Ad_\sD$-invariance of $\lara$ is automatic if $D$ is connected.

Recall that the $r$-matrix on $\d$ is given by
\begin{equation}\label{eq:R-d}
R = \frac{1}{2}\sum_{i=1}^n \xi_i \wedge u_i \in \wedge^2\d,
\end{equation}
where $\{u_i\}_{i=1}^n$ is any basis of $\g$ and $\{\xi_i\}_{i=1}^n$ the dual basis of $\k \cong\g^*$.  Let $D$ be any
Lie group integrating $(\d, \lara)$, and for $V \in \wedge^k \d$, let $V^\lt$ and $V^\rt$
respectively denote the left and right invariant $k$-vector fields on $D$ with value $V$ at $e \in D$. Then  the bi-vector field
\begin{equation}\label{eq:piD}
\pi_\sD = R^\rt - R^\lt
\end{equation}
on $D$ is multiplicative and has Schouten bracket  $[\pi_\sD, \pi_\sD] = [R, R]^\rt - [R, R]^\lt$, where
$[R, R] \in \wedge^3\d$ is given by \cite[(2.6)]{LY:DQ}
\begin{equation}\label{eq:RR}
\langle [R, R], \, a \wedge b \wedge c\rangle = 2\langle a, \, [b, c]\rangle, \hspace{.2in} a, b, c \in \d.
\end{equation}
The invariance of $\lara$ under the adjoint action of $D$ implies that $[R, R]^\rt = [R, R]^\lt$, so
$(D, \pi_\sD)$ is a Poisson Lie group.

\begin{lem}\label{lem:G-D-Poi}
Let $D$ be any integration of $(\d, \lara)$, and let  $\phi_\sG: G \to D, g \mapsto \bar{g}$, be any integration of
$\g \hookrightarrow \d$. Then $(G, \pi_\sG)$ is a Poisson Lie group with $\pi_\sG$ given by
\begin{equation}\label{eq:piG-bar}
\pi_{\sG}|_{{g}}(\xi_1^\lt, \xi_2^\lt) =
\langle {\rm p}_\g (\Ad_{\bar{g}}\xi_1), \, {\rm p}_{\g^*} (\Ad_{\bar{g}}\xi_2)\rangle, \hspace{.2in}
\xi_1, \, \xi_2 \in \g^*,
\end{equation}
and $(\d, \lara)$ is the Drinfeld double Lie algebra of $(G, \pi_\sG)$. Furthermore, $\phi_\sG: (G, \pi_\sG) \to (D, \pi_\sD)$ is a Poisson Lie group morphism.
\end{lem}

\begin{proof}
Since the adjoint action of $D$ on $\d$ preserves $\lara$,
the action $g \mapsto {\rm Ad}_{\bar{g}}$ of $G$ on $\d$ makes $((\d, \lara), \g, \g^*)$ into a $G$-equivariant Manin triple, which by
\cite[Theorem 5.12]{MeinLectures} (see $\S$\ref{ss:Poi-Dirac}) gives rise to a multiplicative Poisson structure $\pi_\sG$ on $G$
via \eqref{eq:piG-bar}. The Lie bracket on $\g^*$ coincides with the one obtained by the linearization of $\pi_\sG$ at $e$ via \eqref{eq:delta-s}, so
$(\d, \lara)$ is the Drinfeld double Lie algebra of $(G, \pi_\sG)$.

To show that $\phi_\sG: (G, \pi_\sG) \to (D, \pi_\sD)$ is Poisson, let
$g \in G$ and note that since $\Ad_{\bar{g}}(\g) = \g$, one has
$R = \frac{1}{2}\sum_{i=1}^n  {\rm p}_{\g^*} {\rm Ad}_{\bar{g}^{-1}} \xi_i \wedge {\rm Ad}_{\bar{g}^{-1}} u_i$. Thus
\begin{align*}
\pi_{\sD}|_{\bar{g}} & =\lt_{\bar{g}} \left(-R + {\rm Ad}_{\bar{g}^{-1}} R\right)
 =\lt_{\bar{g}} \left( -R +\frac{1}{2}\sum_{i=1}^n ({\rm p}_{\g} ({\rm Ad}_{\bar{g}^{-1}}\xi_i) + {\rm p}_{\g^*} {\rm Ad}_{\bar{g}^{-1}} \xi_i)
\wedge {\rm Ad}_{\bar{g}^{-1}} u_i\right)\\
& =  \frac{1}{2}\sum_{i=1}^n \lt_{\bar{g}} ({\rm p}_{\g} ({\rm Ad}_{\bar{g}^{-1}}\xi_i)) \wedge \rt_{\bar{g}} u_i.
\end{align*}
On the other hand, one can re-write $\pi_\sG$ given in \eqref{eq:piG-bar} as
\[
\pi_{\sG}|_{{g}} =\frac{1}{2}\sum_{i=1}^n \lt_{{g}} {\rm p}_{\g} ({\rm Ad}_{{\bar{g}}^{-1}}\xi_i) \wedge \rt_{{g}} u_i.
\]
It follows that $\phi_\sG$ is Poisson.
\end{proof}


\begin{defn}\label{defn:D-double}
{\rm
Let $(G, \pi_\sG)$ be a Poisson Lie group with Drinfeld double Lie algebra $(\d, \lara)$.
A {\it Drinfeld double} of $(G, \pi_\sG)$ is a pair $(D, \phi_\sG)$, where
$D$ is a Lie group integrating $(\d,\lara)$, and
$\phi_\sG: G \to D, g \mapsto \bar{g},$ is an integration of $\g \hookrightarrow \d$, such that
\begin{equation}\label{eq:Ad-gg}
{\rm Ad}_g = {\rm Ad}_{\bar{g}}: \d \to \d, \hs g \in G,
\end{equation}
where $g \mapsto {\rm Ad}_g$ is the $G$ action on $\d$ defined by $\pi_\sG$ given in \eqref{Gond}.
}
\end{defn}

\begin{rem}\label{re:G-D}
{\rm
(1) Condition \eqref{eq:Ad-gg}  ensures that $\pi_\sG$ coincides with the multiplicative Poisson structure on $G$ in \eqref{eq:piG-bar}
that is determined by the
integration $\phi_\sG: G \to D$. In particular $\phi_\sG: (G, \pi_\sG) \to (D, \pi_\sD)$ is Poisson whenever $(D, \phi_\sG)$ is a
Drinfeld double of $(G, \pi_\sG)$.
 Note that \eqref{eq:Ad-gg} is automatic when $G$ is connected.

(2) Every simply connected Poisson Lie group $(G, \pi_\sG)$
admits a Drinfeld double in the sense of Definition \ref{defn:D-double}, but we do not  know whether or
not this is true for an arbitrary Poisson Lie group $(G, \pi_\sG)$. On the other hand, starting from
any Manin triple $((\d, \lara), \g, \k)$ and any integration $D$ of $(\d, \lara)$, the Poisson Lie groups $(G, \pi_\sG)$
 in Lemma \ref{lem:G-D-Poi} have $(D, \phi_\sG)$ as Drinfeld doubles by construction.
All the examples related to semi-simple Lie groups that we will discuss in $\S$\ref{ss:semisimple} arise in this way.
\hfill $\diamond$
}
\end{rem}

\begin{rem}\label{re:Gs}
{\rm
Suppose that $\phi_{\sG^*}: G^* \to D$ is any integration of the Lie algebra inclusion $\g^* \to \d$.
 By Lemma \ref{lem:G-D-Poi}, one also has a multiplicative Poisson structure $\pi_{\sG^*}$ on $G^*$ such that
$\phi_{\sG^*}: (G^*, \pi_{\sG^*}) \to (D, -\pi_\sD)$ is Poisson. Explicitly,
\begin{equation}\label{eq:piGs}
\pi _{\sG^*} (w_1^\rt, w_2^\rt)|_k=\langle  {\rm p}_{\g}  (\Ad_{{\bar{k}}^{-1}} w_1),\,
 {\rm p}_{\g^*} ( \Ad_{{\bar{k}}^{-1}} w_2) \rangle,\hspace{.2in} k \in G^*, \, w_1, w_2 \in \g,
\end{equation}
where  $w^\rt$ for $w \in \g$ denotes the right invariant
$1$-form\footnote{Here, by the notation set up at the end of $\S$\ref{s:intro},  $w^\rt$ for $w \in \g$ denotes both
the right invariant vector field on $G$ with value $w$ at $e \in G$ and the right invariant $1$-form on $G^*$ with value $w$ at $e \in G^*$. The meaning of the
notation should be clear from the context. Similarly, for $\xi \in \g^*$, $\xi^\lt$ denotes both the left invariant vector field
on $G^*$ and the left invariant $1$-form on $G$ defined by $\xi$.} on $G^*$ with value $w \in \g \cong (\g^*)^*$
at the identity element of $G^*$.
The Poisson Lie group $(G^*, \pi{\sG^*})$ is dual to $(G, \pi_\sG)$ in the sense that the Lie bialgebra of $(G^*, \pi_{\sG^*})$ is $(\g^*, \delta)$, where
\begin{equation}\label{eq:delta}
\delta: \;\; \g^*\to \g^*\wedge\g^*, \;\; \delta(\xi)(w_1, w_2) = -\xi([w_1, w_2]_\g), \hspace{.2in} \xi \in \g^*, \, w_1, w_2 \in \g.
\end{equation}
For later use, we note that it follows directly from \eqref{eq:piGs} that
\begin{equation}\label{eq:sharpGs}
\pi_{\sG^*}^\sharp (w^\rt)|_k= -({\rm p}_{\g^*} \Ad _{{\bar{k}}^{-1}} w)^\lt|_k, \hspace{.2in} k \in G^*, \, w \in \g.
\end{equation}
\hfill $\diamond$
}
\end{rem}

\subsection{Explicit integrations of affine Dirac structures on Poisson Lie groups}\label{ss:explicit-affine}
Let $(G, \pi_\sG)$ be a Poisson Lie group.
Under the assumption that $(G, \pi_\sG)$ admits a {\it Drinfeld double}, we construct
 in this section explicit
pre-symplectic groupoids integrating every given $(G, \pi_\sG)$-affine Dirac structure on $G$.

\subsubsection{The construction}\label{ss:construction}  Assume that $(G, \pi_\sG)$ has a Drinfeld double $(D, \phi_\sG)$ and write
$\phi_{\sG}:  G \to D,  g \mapsto \bar{g}$.
Let $\l$ be any lagrangian Lie sub-algebra of  $(\d, \lara)$, and let
\[
\phi_{\sG^*}: \;\;G^*\to D, \;\; k\mapsto \bar{k}, \hspace{.2in} \mbox{and} \hspace{.2in}
\phi_{\sL}: \;\; L \to D, \;\; l \mapsto \bar{l},
\]
be any integrations of the respective Lie algebra inclusions $\g^* \to \d$ and $\l \to \d$  (see Definition \ref{def:ST}).
Define, as in \eqref{eq:GD-poid}, the Lie groupoid
\[
\G(D) = \{(k, g_1, g_2, d) \in G^* \times G \times G \times D|\, \bar{k} \bar{g}_1= \bar{g}_2d^{-1}\}
\]
over $G$, with structure maps given in \eqref{eq:GD-s1} - \eqref{eq:GD-s3}, and the Lie sub-groupoid
\begin{equation}\label{eq:GL}
\G(L)  :=\{ (k,g_1,g_2,l)\in G^*\times G\times G\times L \, | \, \bar{k}\bar{g}_1=\bar{g}_2\bar{l}^{-1} \}.
\end{equation}
By Corollary \ref{cor:GL0}, $\G(L)$ integrates the action Lie algebroid $\l \ltimes G$ over $G$. We will show that $\G(L)$ admits a
multiplicative pre-symplectic  $2$-form $\omega_\sL$ such that $(\G(L), \omega_\sL)$ is an integration of $E = {\bf e}(\l_\sG)$
as a Dirac structure on $G$.
Note that $\G(L)$ depends on the choices of $(D, \phi_{\sG}), (G^*, \phi_{\sG^*})$ and $(L, \phi_{\sL})$,
but we only indicate its dependence on $L$ for notational simplicity.

We will define $\omega_\sL \in \Omega^2(\G(L))$ as the pullback to $\G(L)$ of a $2$-form $\omega_\sD$ on $\G(D)$, which, in turn, is
defined as the skew-symmetric part of a $2$-tensor field $\tau_\sD$ on $\G(D)$.

\begin{nota}\label{nota:theta}
{\rm
If $M$ is any manifold and if, for $i = 1, 2$, $\q_i$ is a Lie sub-algebra of $\d$ and $\theta_i \in \Omega^1(M, \q_i)$, let
$\langle \theta_1 \otimes \theta_2 \rangle$ be the section of $T^*M \otimes T^*M$ given by
\[
\langle \theta_1 \otimes \theta_2 \rangle(v_1, v_2) = \langle \theta_1(v_1), \,\theta_2(v_2)\rangle, \hs v_1, v_2 \in {\mathfrak{X}}^1(M),
\]
and let $\langle \theta_1 \wedge \theta_2 \rangle$ be the $2$-form on $M$ given by
\[
\langle\theta_1 \wedge \theta_2\rangle :=\langle \theta_1 \otimes \theta_2 \rangle-\langle \theta_2 \otimes \theta_1 \rangle.
\]
For a Lie group $A$, let $\theta_\sA^\lt$ and $\theta_\sA^\rt$ denote the respective left and right invariant Maurer-Cartan forms on $A$.
\hfill $\diamond$
}
\end{nota}

By abuse of notation, let $\theta^\rt_{\sG^*}, \theta^\lt_{\sGo}, \theta^\rt_{\sGt},  \theta^\rt_{\sD}$ also denote
 the pullbacks to
\[
\G(D)\subset G^*\times G\times G\times D
\]
 of the respective left- and right-invariant
Maurer-Cartan forms on the factors. Using Notation \ref{nota:theta},
define
the tensor field $\tau_\sD \in \Gamma(T^* (\G(D)) \otimes T^*(\G(D)))$  by
\begin{equation}\label{eq:tauD}
\tau_\sD = \langle{\theta^\lt_{\sGo} \otimes \theta^\rt_{\sD}}\rangle +
\langle{\theta^\rt_{\sG^*} \otimes \theta^\rt_{\sGt}}\rangle.
\end{equation}
Define the $2$-form $\omega_\sD$ on $\G(D)$ to be the skew-symmetric part of $\tau_\sD$, i.e.,
\begin{equation}\label{eq:sigmaD}
\omega_\sD  = \frac{1}{2} \langle{\theta^\lt_{\sGo} \wedge  \theta^\rt_{\sD}}\rangle + \frac{1}{2}
\langle{\theta^\rt_{\sG^*} \wedge  \theta^\rt_{\sGt}}\rangle,
\end{equation}
and let $\omega_\sL \in \Omega^2(\G(L))$ be the pullback of $\omega_\sD$ via
$\G(L) \to \G(D),  (k, g_1, g_2, l) \mapsto
(k, g_1, g_2, \bar{l})$.
Let $\theta^\rt_\sL$ also denote the pullback to $\G(L)\subset G^*\times G\times G\times L$ of the right invariant
Maurer-Cartan form on $L$. One then also has
\begin{equation}\label{eq:severaform}
    \omega_\sL= \frac{1}{2}\langle{\theta^\lt_{\sGo} \wedge \theta^\rt_{\sL}}\rangle + \frac{1}{2}\langle{\theta^\rt_{\sG^*} \wedge\theta^\rt_{\sGt}}\rangle
		\in \Omega^2(\G(L)).
\end{equation}
The following main result of this section will be proved in $\S$\ref{ss:proof-presymp}.

\begin{them}\label{thm:presymp} For any lagrangian Lie sub-algebra $\l$ of $(\d, \lara)$ and
any Lie groupoid $\G(L)$ defined in \eqref{eq:GL}, the 2-form $\omega_\sL\in \Omega^2(\G(L))$ in \eqref{eq:severaform}
is multiplicative and closed, and $(\G(L), \omega_\sL)$ is a pre-symplectic groupoid integrating the Dirac structure ${\bf e}(\l_\sG)$ on $G$.
\end{them}

\begin{rem}\label{rem:LLL}
{\rm
In the construction of the  groupoid $\G(L)$, we do not require
any of the Lie groups $D, G, G^*$ or $L$ to be connected,  nor the Lie group homomorphisms $\phi_\sS: S \to D$ for $S \in \{G, G^*, L\}$ be
injective.
Thus Theorem \ref{thm:presymp} in general
gives more than one integration of
the Dirac structure ${\bf e}(\l_\sG)$ on $G$.
In particular, starting from a quadratic Lie algebra $(\d, \lara)$, one may take
any connected Lie group $D$ with Lie algebra
$\d$ as  an integration of $(\d, \lara)$. Given a triple $\g, \g'$ and $\l$ of  lagrangian Lie sub-algebras of $(\d, \lara)$ such that
$\d = \g + \g'$ as a vector space, one may identify  $\g' \cong \g^*$
using $\lara$, and by  Lemma \ref{lem:G-D-Poi}
any integrations
\[
\phi_{\sG}: G \to D, \hs \phi_{\sG^*}: G^* \to D, \hs \phi_\sL: L \to D,
\]
respectively of  $\g \hookrightarrow \d$, $\g^* \hookrightarrow \d$, and $\l \hookrightarrow \d$,
can then be used in Theorem \ref{thm:presymp} to construct
a pre-symplectic groupoid $(\G(L), \omega)$ integrating the
$(G, \pi_{\sG})$-affine Dirac structure ${\bf e}(\l_\sG)$ on $G$.
The construction becomes canonical if we take $G$, $G^*$ and $L$ to be the (unique) connected Lie subgroups of $D$ with respective Lie algebras
$\g$, $\g'$ and $\l$.
\hfill $\diamond$
}
\end{rem}

\begin{numex}\label{ex:PLinteg}
{\rm ({\bf Affine Poisson structures on $G$}).
Assume, in the context of Theorem \ref{thm:presymp},  that $\g \cap \l = 0$. Then ${\rm Ker}({\bf e}(\l_\sG)) = 0$, so ${\bf e}(\l_\sG)$ is the graph of
a Poisson structure $\pi$ on $G$, which is {\it $(G, \pi_\sG)$-affine} \cite{Lu3}
in the sense that the group multiplication map $m: (G, \pi_\sG) \times (G, \pi) \to (G, \pi)$ is a
Poisson map. In this case the $2$-form $\omega_\sL$ in Theorem \ref{thm:presymp} is symplectic, so $(\G(L), \omega_\sL)$ is
a symplectic groupoid of $(G, \pi)$.
Let
\[
\Gamma = \{(g_1, l, k, g_2) \in G \times L \times G^* \times G|\;\bar{g}_1 \bar{l} = \bar{k} \bar{g}_2\},
\]
and equip $\Gamma$ with the Lie groupoid structure such that the map
\begin{equation}\label{eq:Gamma}
\G(L)\to \Gamma,\quad  (k, g_1, g_2, l) = (g_1, l, k^{-1}, g_2)
\end{equation}
is an isomorphism.
 We see from the expression in \eqref{eq:severaform} that $\omega_\sL$ agrees with the symplectic form on $\Gamma$ from \cite[Examples~3.1 and 3.2]{S12}.

The original description in \cite[$\S$5.5]{Lu3} of the symplectic
structure $\omega_\Gamma$ on $\Gamma$ integrating the affine Poisson structure $\pi$ on $G$ was via a non-degenerate Poisson structure defined
on $\Gamma$ as the pullback of a suitable Poisson structure on $G \times L$
by the local diffeomorphism $\Gamma \to G \times L,  (g_1, l, k, g_2) \mapsto (g_1, l)$,
see \cite[Remark 5.29]{Lu3}.
One verifies by a direct calculation (we omit the details)   that \eqref{eq:Gamma} indeed identifies $\omega_\Gamma$ with $\omega$.
\hfill $\diamond$
}
\end{numex}

\subsubsection{Another explicit formula for $\tau_\sD$}\label{ss:tauD-explicit}
As the map
\[
P: \; \G(D) \to G \times D, \; (k, g_1, g_2, d) \mapsto (g_2, d),
\]
is a local diffeomorphism, we know that the tangent bundle $T(\G(D))$, being
 isomorphic to the pullback by $P$ of the tangent bundle of the Lie group
$G \times D$, is trivial.
In this section, we use a trivialization of $T(\G(D))$ to establish another explicit formula for
the tensor field $\tau_\sD$ on $\G(D)$, thus also for the $2$-form $\omega_\sL$ in Theorem \ref{thm:presymp},
in terms of the multiplicative Poisson bi-vector
fields $\pi_\sG$ on $G$ and $\pi_{\sG^*}$ on $G^*$.

Let $\gamma = (k, g_1, g_2, d) \in \G(D) \subset G^* \times G \times G \times D$ and write
$X \in T_\gamma(G^* \times G \times G \times D)$ as
\[
X = (\eta^\lt|_k, \, v^\rt|_{g_1}, \, w^\rt|_{g_2}, \, a^\rt|_d), \hs \mbox{where}\;\;
\eta \in \g^*, \, v, w \in \g, \, a \in \d.
\]
By the defining relation of $\G(D)$ in $G^* \times G \times G \times D$, one sees that
$X \in T_\gamma(\G(D))$ if and only if $\eta \in \g^*$ and $v \in \g$ are uniquely determined by $w$ and $a$ via
\begin{equation}\label{eq:v-eta-0}
\eta + v= {\rm Ad}_{\bar{k}^{-1}} w - {\rm Ad}_{\bar{g}_1} a.
\end{equation}
Writing $a = u + \xi$ with $u \in \g$ and $\xi \in \g^*$ and by \eqref{eq:dress} and \eqref{eq:sharpGs},
\eqref{eq:v-eta-0} is equivalent to
\begin{align*}
\eta & =  \pr_{\g^*} {\rm Ad}_{\bar{k}^{-1}} w  - \pr_{\g^*} {\rm Ad}_{\bar{g}_1} a = -\lt_{k^{-1}}\pi_{\sG^*}^\#(w^\rt)|_k -{\rm Ad}_{g_1^{-1}}^* \xi,\\
v &=\pr_\g {\rm Ad}_{\bar{k}^{-1}} w - \pr_\g {\rm Ad}_{\bar{g}_1} a ={\rm Ad}_k^* w  +\rt_{g_1^{-1}} \rho_\d(a)|_{g_1} =
 {\rm Ad}_k^* w - \rt_{g_1^{-1}}( u^\lt+ \pi_\sG^\#(\xi^\lt))|_{g_1}.
\end{align*}
For $w \in \g$ and $a \in \d$, define vector fields $\widetilde{w}$ and $\widetilde{a}$ on $\G(D)$ by
\begin{align}\label{eq:wt-w}
&\widetilde{w}|_\gamma = ( -\pi_{\sG^*}^\#(w^\rt)|_k, \, ({\rm Ad}_k^*w)^\rt|_{g_1}, \, w^\rt|_{g_2}, \, 0) \in T_\gamma(\G(D)),\\
\label{eq:wt-a}
&\widetilde{a}|_\gamma = (-(\pr_{\g^*} {\rm Ad}_{\bar{g}_1} a)^\lt|_k, \, \rho_\d(a)|_{g_1}, \, 0, \, a^\rt|_d) \in T_\gamma(\G(D)),
\end{align}
where $\gamma = (k, g_1, g_2, d) \in \G(D)$. As every tangent vector of $\G(D)$ at $\gamma$ is of the form
$\widetilde{w}|_\gamma + \widetilde{a}|_\gamma$ for unique $w \in \g$ and $a \in \d$, one has the vector bundle isomorphism
\begin{equation}\label{eq:TDG-trivial}
{\mathcal{T}}: \; \G(D) \times (\g \oplus \d) \rightarrow T(\G(D)), \; (\gamma, w, a) \mapsto (\widetilde{w} + \widetilde{a})|_\gamma.
\end{equation}
Recall that the source map of $\G(D) \rightrightarrows G$ is $\s (k, g_1, g_2, d) = g_2$. Let
\begin{equation}\label{eq:pD}
\pr_\sD: \; \G(D) \rightarrow D, \; (k, g_1, g_2, d) \mapsto d.
\end{equation}
Then ${\rm Ker} (\dif \s)|_\gamma = \{\widetilde{a}|_\gamma: a \in \d\}$  and ${\rm Ker} (\dif \pr_\sD) =\{\widetilde{w}|_\gamma: w \in \g\}$.
One thus has
\begin{equation}\label{eq:TGD-decomp}
T(\G(D)) = {\rm Ker} (\dif \s) + {\rm Ker} (\dif \pr_\sD)
\end{equation}
as a direct sum of vector bundles, and
${\mathcal{T}}$ in \eqref{eq:TDG-trivial} restricts to an isomorphism
\[
 \G(D) \times \d \, \stackrel{\sim}{\longrightarrow} \, {\rm Ker}(\dif \s)
\hs \mbox{and} \hs \G(D) \times \g \, \stackrel{\sim}{\longrightarrow} \, {\rm Ker}(\dif \pr_\sD).
\]
We will refer to ${\rm Ker} (\dif \s)$  as the {\it vertical vector sub-bundle} of $T(\G(D))$ and to
${\rm Ker} (\dif \pr_\sD)$ as the {\it horizontal vector sub-bundle} of $T(\G(D))$.
Note that $\widetilde{a}$, for every $a \in \d$,
 is a right-invariant vector field on $\G(D)$.
Following the definition of $\tau_\sD$, it is now straightforward to express
$\tau_\sD$ using vertical and horizontal tangent vectors.

\begin{lem}\label{lem:tauD-wa}
For $\gamma = (k, g_1, g_2, d) \in \G(D)$,
and for $w_i \in \g$ and $a_i = u_i + \xi_i \in \d$, where $i = 1, 2$ and $u_i \in \g$ and $\xi_i  \in \g^*$, one has
\begin{align}\label{eq:tauD-explicit}
\tau_\sD(\widetilde{w}_1 + \widetilde{a}_1,\widetilde{w}_2+ \widetilde{a}_2)|_{\gamma} = & -\langle \xi_2,  u_1\rangle
- \pi_{\sG}(\xi_1^\lt,\xi_2^\lt)|_{g_1} - \langle \Ad^*_{g^{-1}_1}\xi_1, \Ad_k^*w_2 \rangle
 \nonumber\\
& + \langle \Ad^*_{g^{-1}_1} \xi_2,\Ad_k^*w_1\rangle
- \pi_{\sG^*}({w}^\rt_1,{w}^\rt_2)|_k.
\end{align}
\end{lem}

\begin{rem}\label{rem:reformulation}
{\rm
For an alternative interpretation of $\tau_\sD$, consider the fiber-wise bilinear form $\tau$ on the vector bundle $\ttt G^* \times \ttt G$ over $G^* \times G$
given as follows:
for $(k, g) \in G^* \times G$ and $V_i = ((Y_i, \beta_i), (X_i, \alpha_i)) \in \ttt_k G^* \times \ttt_g G$ for $i = 1, 2$,
\[
\tau(V_1, V_2) = \beta_1(Y_2) + \alpha_1(X_2) + (\lt^*_k \beta_1, \rt_g^* \alpha_2) - (\lt^*_k \beta_2, \rt_g^* \alpha_1),
\]
where the last two terms are defined using the pairing between $\g$ and $\g^*$.
We also regard $\tau$ as a fiber-wise bilinear form on the pullback vector bundle $\chi^*(\ttt G^* \times \ttt G)$,
where
\[
\chi: \; \G(D) \rightarrow G^* \times G, \;(k, g_1, g_2, d) \mapsto (k, g_1).
\]
Define the vector bundle embedding ${\mathcal{I}}: T(\G(D)) \rightarrow \chi^* (\ttt G^* \times \ttt G)$ by
\begin{align*}
{\mathcal{I}} ((\widetilde{w} + \widetilde{a})|_\gamma)   &=
((\pi_{\sG^*}^\#(w^\rt), w^\rt)|_k, \, (\rho_\d(a), \, -\xi^\lt|)|_{g_1})\\
 & = ((\pi_{\sG^*}^\#(w^\rt), w^\rt)|_k, \, -((u^\lt + \pi_\sG^\#(\xi^\lt)), \, \xi^\lt|)_{g_1}),
\end{align*}
where again $w \in \g, a = u + \xi \in \d$ with $u \in \g$ and $\xi \in \g^*$, and $\gamma = (k, g_1, g_2, d) \in \G(D)$.
Note that ${\mathcal{I}}(T(\G(D))) = {\rm gr}(\pi_{\sG^*}) \times \ttt G$.
Using \eqref{eq:tauD-explicit}, one checks directly that
$\tau_\sD$, as a fiber-wise bilinear form on $T(\G(D))$,  is the pullback to $T(\G(D))$ of $-\tau$ by ${\mathcal{I}}$.
\hfill $\diamond$
}
\end{rem}

The following statements on the symmetric part
$\sigma_\sD \in S^2(T^*(\G(D))$ and the skew-symmetric part  $\omega_\sD \in \Omega^2(\G(D))$ of $\tau_\sD$ follow immediately from
Lemma \ref{lem:tauD-wa}.

\begin{lem}\label{lem:onegaD-explicit}
For $\gamma = (k, g_1, g_2, d) \in \G(D)$,
and for $w_i \in \g$ and $a_i = u_i + \xi_i \in \d$, where $i = 1, 2$ and $u_i \in \g$ and $\xi_i  \in \g^*$, one has
\begin{align}
\label{eq:sigmaD-exlicit}
\sigma_\sD(\widetilde{w}_1 + \widetilde{a}_1,\widetilde{w}_2+ \widetilde{a}_2)|_{\gamma} =& -\frac{1}{2}\langle a_1, a_2\rangle,\\
\label{eq:omegaD-exlicit}
\omega_\sD(\widetilde{w}_1 + \widetilde{a}_1,\widetilde{w}_2+ \widetilde{a}_2)|_{\gamma} = &
\frac{1}{2}(\langle \xi_1,u_2\rangle- \langle \xi_2,u_1\rangle)- \pi_{\sG}(\xi_1^\lt,\xi_2^\lt)|_{g_1} - \langle \Ad^*_{g^{-1}_1}\xi_1, \Ad_k^*w_2 \rangle \nonumber\\& + \langle \Ad^*_{g^{-1}_1} \xi_2,\Ad_k^*w_1\rangle
- \pi_{\sG^*}({w}^\rt_1,{w}^\rt_2)|_k.
\end{align}
\end{lem}

Note that $\sigma_\sD$ can be written more compactly as
\begin{equation}\label{eq:sigmaD-theta}
\sigma_\sD  = -\frac{1}{2} \langle \theta_\sD^\rt \otimes \theta_\sD^\rt\rangle.
\end{equation}

As an immediate consequence, we see that
for the sub-groupoid $\G(L)$ of $\G(D)$  in \eqref{eq:GL},  since $\l$ is lagrangian with respect to $\lara$,
the pullback of $\sigma_\sD$ to $\G(L)$  is
identically $0$. Thus the $2$-form $\omega_\sL$ on $\G(L)$ coincides with the pullback of $\tau_\sD$ to $\G(L)$.
Note that the trivialization of $T(\G(D))$ in \eqref{eq:TDG-trivial} restricts to a trivialization of $T(\G(L))$ by $G \times (\g \oplus \l)$
so $T(\G(L))$ also splits  into vertical and horizontal sub-bundles.
For clarity and future references, we state the formula for $\omega_\sL$ using vertical and horizontal tangent vectors.

\begin{lem}\label{lem:omegaL-wa} The $2$-form
$\omega_\sL$ on the Lie groupoid $\G(L) \rightrightarrows G$ in \eqref{eq:GL} is given by
\begin{align}\label{eq:omegaL-explicit}
\omega_\sL(\widetilde{w}_1 + \widetilde{a}_1,\widetilde{w}_2+ \widetilde{a}_2)|_{\gamma} = & -\langle \xi_2,  u_1\rangle
- \pi_{\sG}(\xi_1^\lt,\xi_2^\lt)|_{g_1} - \langle \Ad^*_{g^{-1}_1}\xi_1, \Ad_k^*w_2 \rangle
 \nonumber\\
& + \langle \Ad^*_{g^{-1}_1} \xi_2,\Ad_k^*w_1\rangle
- \pi_{\sG^*}({w}^\rt_1,{w}^\rt_2)|_k,
\end{align}
where $\gamma = (k, g_1, g_2, d) \in \G(L)$,  and for $i = 1, 2$,
$w_i \in \g$, and $a_i = u_i + \xi_i \in \l$ with $u_i \in \g$ and $\xi_i  \in \g^*$.
\end{lem}


A way to derive formula \eqref{eq:omegaL-explicit} from general principles in presented in Appendix~\ref{appA}.


\subsubsection{Properties of  $\tau_\sD$ and $\omega_\sD$}\label{ss:tauD}
In this section, we elucidate some properties of the tensor field $\tau_\sD$ and the $2$-form $\omega_\sD$ on $\G(D)$ in relation to the groupoid structure
$\G(D) \rightrightarrows G$.
We first look at $\tau_\sD$ at the units of the groupoid $\G(D) \rightrightarrows G$.

For $g \in G$, $w \in \g$, and $a \in \d$, write
\[
\widetilde{w}|_g = \widetilde{w}|_{(e, g, g,e)} = (0, w^\rt|_g, w^\rt|_g, 0), \hs
\widetilde{a}|_g = \widetilde{a}|_{(e, g, g, e)} =(-{\rm p}_{\g^*} \Ad _{\bar{g}} a, \rho_\d(a)|_g, 0, a).
\]
Then $A_\sD = \Ker (\dif \s)|_\sG = \{\widetilde{a}|_g: a \in \d\}$ is
the Lie algebroid of $\G(D) \rightrightarrows G$ with anchor
\[
\rho_\sD: \; A_\sD \to TG, \;\rho_\sD(\widetilde{a}|_g) = \rho_\d(a)|_g = -\rt_g \pr_\g {\rm Ad}_{\bar{g}} a = -(u^\lt + \pi_\sG^\#(\xi^\lt))|_g, \hs a = u + \xi \in \d.
\]
Identify $T_gG \cong \{\widetilde{w}|_g: w \in \g\}$.
Then  $T_{(e, g, g, e)} \G(D) = A_\sD|_g + T_gG$ as a vector space direct sum. Define
$\mu_{\tau_\sD}: A_\sD \to T^*G$ by $\widetilde{a}|_g  \mapsto\left(\tau_\sD(\widetilde{a}|_g, \cdot )\right)|_{T_g\sG}$, and define
\begin{equation}\label{eq:delta-tauD}
\delta_{\tau_\sD} = (\rho_\sD, \mu_{\tau_\sD}): \; A_\sD \to \ttt G = TG \oplus T^*G.
\end{equation}
By \eqref{eq:tauD-explicit},  one has
$\mu_{\tau_\sD}(\widetilde{a}|_g) = -\xi^\lt|_g$ for $a = u + \xi  \in \d$. Thus
\[
\delta_{\tau_\sD}(\widetilde{a}|_g) = (\rho_\d(a), \,  -\xi^\lt)|_g = -(u^\lt + \pi_\sG^\#(\xi^\lt), \, \xi^\lt)|_g, \hs a = u + \xi \in \d.
\]
In particular, $\delta_{\tau_\sD}: A_\sD \to \ttt G$ is a vector bundle isomorphism. Recall from
\eqref{eq:I} the Courant algebroid isomorphism ${\bf e}: \d \times G \to \ttt G$, and
 from \eqref{eq:d-AsD} the
Lie algebroid isomorphism
$\varphi: \; \d \ltimes G \to A_\sD,  \;(g, a) \mapsto \widetilde{a}|_g$. The following is now immediate.

\begin{lem}\label{lem:delta-tauD}
As vector bundle isomorphisms, one has ${\bf e} = \delta_{\tau_\sD} \circ \varphi: \; \d \times G \rightarrow \ttt G.$
\end{lem}

\begin{rem}\label{rem:tauD-units}
{\rm
Under the isomorphism
$\delta_{\tau_\sD}: A_\sD \to \ttt G$, the restriction of $\tau_\sD$ to $A_\sD$ becomes the
bilinear form
$-\tau_{\scriptscriptstyle{\ttt G}}$ on the fibers of $\ttt G$, where
\[
\tau_{\scriptscriptstyle{\ttt G}}((X_1,\alpha_1), (X_2, \alpha_2)) = X_1(\alpha_2), \hs X_i \in {\mathfrak{X}}^1(G), \alpha_i \in \Omega^1(G), i = 1, 2.
\]
The restriction to $A_\sD$ of the symmetric part $\sigma_\sD$ of $\tau_\sD$ then becomes $-\frac{1}{2}$ of
the fiber-wise symmetric bilinear form on $\ttt G$ given in \eqref{eq:lara-tttM}.
Similarly, the restriction of the $2$-form $\omega_\sD$ to $A_\sD$ becomes, up to a factor of $-\frac{1}{2}$, the standard fiber-wise
skew-symmetric bilinear form on $\ttt G$.
\hfill $\diamond$
}
\end{rem}


We now turn to the {\it multiplicativity}, or rather the {\it failure of multiplicativity},
 of $\tau_\sD$ with respect to the groupoid structure $\G(D) \rightrightarrows G$.
Let $\G(D)^{(2)}$ be the sub-manifold of $\G(D) \times \G(D)$ of composable elements, and let $m: \G(D)^{(2)} \to \G(D)$ be the
multiplication map. Let $\pr_1, \pr_2: \G(D)^{(2)} \to \G(D)$ be the projections to the two factors.

\begin{lem}\label{lem:tauD-1}
As tensor fields on $\G(D)^{(2)}$, one has
\begin{equation}\label{eq:tauD-multi}
m^* \tau_\sD - \pr_1^* \tau_\sD - \pr_2^* \tau_\sD = -\langle \pr_1^* \theta_\sD^\lt \otimes \pr_2^* \theta_\sD^\rt \rangle.
\end{equation}
\end{lem}

\begin{proof} Let $\gamma = (k, g_1, g_2, d) \in \G(D)$ and $\gamma' = (k', g_1^\prime, g_2^\prime, d^\prime) \in \G(D)$ be
such that $g_2 = g_1^\prime$, so that $(\gamma, \gamma') \in \G(D)^{(2)}$  and
$\gamma \gamma' = (k'k, g_1, g_2^\prime, dd')$. The multiplication law of $\G(D) \rightrightarrows G$ then gives the following  identities
 on linear maps $T_{(\gamma, \gamma')} \G(D)^{(2)}\to \d$:
\begin{align}\label{eq:m-theta-1}
& m^* \theta ^\lt_{\sGo}= \pr_1 ^* \theta ^\lt_{\sGo},\qquad \quad \;\;\qquad m^* \theta^\rt_{\sGt}=\pr_2 ^* \theta ^\rt _{\sGt},\\
\label{eq:m-theta-2}
&m^* \theta ^\rt _{\sD}= \pr_1 ^* \theta ^\rt _{\sD}+\Ad _{d}  \pr_2^*\theta ^\rt _{\sD},\quad\;
m^* \theta ^\rt _{\sG^*}= \Ad _{k'} \,\pr_1^*\theta ^\rt_{\sG^*}+\pr_2 ^* \theta ^\rt _{\sG^*},
\end{align}
where $\Ad _{q} \theta := {\rm Ad}_q \circ \theta$ for $q \in D$ and $\theta: T_{(\gamma, \gamma')} \G(D)^{(2)}\to \d$.
 One thus has
\[
(m^*\tau_\sD)|_{(\gamma, \gamma')}
=\langle \pr _1^* \theta ^\lt _{\sGo} \otimes (\pr_1 ^* \theta ^\rt _{\sD}+\Ad _{d} \pr_2^*\theta ^\rt_{\sD})\rangle
 +\langle ( \Ad _{k'} \pr_1^*\theta ^\rt_{\sG^*}+\pr_2 ^* \theta ^\rt _{\sG^*} )\otimes \pr_2 ^* \theta ^\rt _{\sGt} \rangle.
\]
Setting $Q = (m^*\tau_\sD  -\pr _1^*\tau_\sD -\pr _2^*\tau_\sD)|_{(\gamma, \gamma')}$, one then has
\begin{align}\nonumber
Q & =
 \langle \pr _1^* \theta ^\lt _{\sGo} \otimes \Ad _{d} \pr_2^*\theta ^\rt _{\sD}\rangle
 +\langle \Ad _{k'} \pr_1^*\theta ^\rt _{\sG^*} \otimes \pr_2 ^* \theta ^\rt _{\sGt} \rangle -
  \langle  \pr_1^*\theta ^\rt _{\sG^*} \otimes \pr_1 ^* \theta ^\rt _{\sGt} \rangle
 -  \langle  \pr_2^*\theta ^\lt _{\sGo}  \otimes \pr_2 ^* \theta ^\rt _{\sD} \rangle,\\
\label{eq:Q}
& = \langle ({\rm Ad}_{d^{-1}} \pr_1^* \theta^\lt_{\sGo} - \pr_2^* \theta^\lt_{\sGo}) \otimes \pr_2^*\theta ^\rt_{\sD}\rangle
+ \langle \pr_1^* \theta^\rt_{\sG^*} \otimes ({\rm Ad}_{(k')^{-1}} \pr_2^* \theta^\rt_{\sGt} - \pr_1^* \theta^{\rt}_{\sGt})\rangle.
\end{align}
Now the composability condition gives $\pr_1 ^* \theta ^\lt _{\sGt}=\pr_2 ^* \theta ^\lt _{\sGo}$ and $\pr_1 ^* \theta ^\rt _{\sGt}=\pr_2 ^* \theta ^\rt _{\sGo}$,
so
\begin{align}\label{eq:Ad-1}
&{\rm Ad}_{d^{-1}} \pr_1^* \theta^\lt_{\sGo} - \pr_2^* \theta^\lt_{\sGo} = {\rm Ad}_{d^{-1}} \pr_1^* \theta^\lt_{\sGo} - \pr_1^* \theta^\lt_{\sGt}
= \pr_1^*({\rm Ad}_{d^{-1}} \theta^\lt_{\sGo} - \theta^\lt_{\sGt}),\\
\label{eq:Ad-2}
&{\rm Ad}_{(k')^{-1}} \pr_2^* \theta^\rt_{\sGt} - \pr_1^* \theta^{\rt}_{\sGt} = {\rm Ad}_{(k')^{-1}} \pr_2^* \theta^\rt_{\sGt} - \pr_2^* \theta^{\rt}_{\sGo}
= \pr_2^*({\rm Ad}_{(k')^{-1}} \theta^\rt_{\sGt} -  \theta^{\rt}_{\sGo}).
\end{align}
On the other hand, \eqref{eq:v-eta-0} gives
\begin{align}\label{eq:Ad-3}
&{\rm Ad}_{d^{-1}} \theta^\lt_{\sGo} - \theta^\lt_{\sGt}
=  -\theta^\lt_\sD - {\rm Ad}_{g_2^{-1}} \theta_{\sG^*}^\rt: \; T_\gamma \G(D) \to \d,\\
\label{eq:Ad-4}&{\rm Ad}_{(k')^{-1}} \theta^\rt_{\sGt} -  \theta^{\rt}_{\sGo}
=  {\rm Ad}_{g_2} \theta^\rt_\sD + \theta^\lt_{\sG^*}: \;
T_{\gamma'} \G(D) \to \d.
\end{align}
Combining \eqref{eq:Ad-1} - \eqref{eq:Ad-4} to substitute into \eqref{eq:Q} and further using the ${\rm Ad}$-invariance and the fact that $\g^*$ is lagrangian,
one now has
\begin{align*}
Q
&= -\langle \pr_1^* \theta_\sD^\lt \otimes \pr_2^* \theta_\sD^\rt \rangle + \langle \pr_1^* \theta_{\sG^*}^\rt \otimes \pr_2^*
\theta_{\sG^*}^\lt  \rangle  = -\langle \pr_1^* \theta_\sD^\lt \otimes \pr_2^* \theta_\sD^\rt \rangle.
\end{align*}
\end{proof}

We now compute $\dif \omega_\sD \in \Omega^3(\G(D))$. Let
 $\langle \theta_\sD^\rt, [\theta_\sD^\rt, \theta_\sD^\rt]\rangle \in \Omega^3(\G(D))$
be given by
\[
\langle \theta_\sD^\rt, [\theta_\sD^\rt, \theta_\sD^\rt]\rangle(x_1, x_2,x_3) =
\langle \theta_\sD^\rt(x_1), [\theta_\sD^\rt(x_2), \theta_\sD^\rt(x_3)]\rangle, \hs x_1, x_2, x_3 \in {\mathfrak{X}}^1(\G(D)).
\]
Note that the ad-invariance of $\lara$ implies that $\langle \theta_\sD^\rt, [\theta_\sD^\rt, \theta_\sD^\rt]\rangle$ is indeed skew-symmetric.

\begin{lem}\label{lem:domegaD}
One has $\dif \omega_\sD = -\frac{1}{2}\langle \theta_\sD^\rt, [\theta_\sD^\rt, \theta_\sD^\rt]\rangle  \in \Omega^3(\G(D))$.
\end{lem}

\begin{proof}
By our convention at the end of $\S$\ref{s:intro},
one has
\[
2\dif \omega_\sD = \langle [\theta^\lt_{\sGo}, \theta^\lt_{\sGo}] \wedge \theta^\rt_{\sD}\rangle
+ \langle \theta^\lt_{\sGo} \wedge [\theta^\rt_{\sD}, \theta^\rt_{\sD}]\rangle
-\langle [\theta^\rt_{\sG^*}, \theta^\rt_{\sG^*}] \wedge \theta^\rt_{\sGt}\rangle
+ \langle \theta^\rt_{\sG^*} \wedge [\theta^\rt_{\sGt}, \theta^\rt_{\sGt}]\rangle.
\]
Fix $\gamma = (k, g_1, g_2, d) \in \G(D)$ and $x_1, x_2, x_3 \in T_\gamma \G(D)$. Let
$\Delta =2\dif \omega_\sD(x_1, x_2, x_3)$. Then
\begin{align*}
\Delta
= &\langle [\theta^\lt_{\sGo}(x_1), \, \theta^\lt_{\sGo}(x_2)], \,
\theta^\rt_{\sD}(x_3)\rangle
+ \langle \theta^\lt_{\sGo}(x_1), \,
[\theta^\rt_{\sD}(x_2), \theta^\rt_{\sD}(x_3)]\rangle \\
&-\langle [\theta^\rt_{\sG^*}(x_1), \, \theta^\rt_{\sG^*}(x_2)], \,
\theta^\rt_{\sGt}(x_3)\rangle
+\langle \theta^\rt_{\sG^*}(x_1), \,
[\theta^\rt_{\sGt}(x_2), \, \theta^\rt_{\sGt}(x_3)]\rangle + \mbox{c.p.}.
\end{align*}
Here and in what follows, for  any expression
$P(y_1, y_2, y_3)$ we set
\[
P(y_1, y_2, y_3) + \mbox{c.p.} = P(y_1, y_2, y_3) + P(y_2, y_3, y_1) + P(y_3, y_1, y_2).
\]
Using the following $\d$-valued linear maps
 \[
\theta^\rt_{\sGo} ={\rm Ad}_{\bar{g}_1} \circ \theta^\lt_{\sGo}, \hs \theta^\lt_{\sG^*}= {\rm Ad}_{\bar{k}^{-1}}\circ \theta^\rt_{\sG^*},\hs
\alpha = {\rm Ad}_{\bar{g}_1} \circ \theta_\sD^\rt, \hs \beta = {\rm Ad}_{\bar{k}^{-1}} \circ \theta^\rt_{\sGt}
\]
on $T_\gamma \G(D)$ and the  ${\rm Ad}$-invariance of $\lara$,  one has
\begin{align*}
\Delta & = \langle [\theta_{\sGo}^\rt(x_1),  \theta_{\sGo}^\rt(x_2)], \alpha(x_3)\rangle + \langle \theta_{\sGo}^\rt(x_1), [\alpha(x_2), \alpha(x_3)] \rangle\\
& \; \;\;-\langle [\theta_{\sG^*}^{\lt}(x_1), \theta_{\sG^*}^{\lt}(x_2)], \beta(x_3)\rangle +
\langle \theta^\lt_{\sG^*}(x_1), \,
[\beta(x_2), \, \beta(x_3)]\rangle + \mbox{c.p.}\\
& = \langle [\theta_{\sGo}^\rt(x_1), \, \theta_{\sGo}^\rt(x_2) + \alpha(x_2)], \,  \alpha(x_3)]\rangle
+ \langle [\theta_{\sG^*}^{\lt}(x_1), \, -\theta_{\sG^*}^{\lt}(x_2)+ \beta(x_2)], \, \beta(x_3)]\rangle + \mbox{c.p.}.
\end{align*}
Note now that
$\theta^\rt_{\sGo} + \alpha = -\theta^\lt_{\sG^*} + \beta$ by \eqref{eq:v-eta-0}. For $x \in T_\gamma(\G(D))$, write
\[
\alpha_\g(x) = \pr_\g \alpha(x), \hs \alpha_{\g^*}(x) = \pr_{\g^*} \alpha(x), \hs \beta_\g(x) = \pr_\g \beta(x), \hs \beta_{\g^*}(x) = \pr_{\g^*} \beta(x).
\]
Then $\theta_{\sGo}^\rt(x) = \beta_\g(x)-\alpha_{\g}(x)$ and $\theta_{\sG^*}^{\lt}(x) = \beta_{\g^*}(x)-\alpha_{\g^*}(x)$ for $x \in T_\gamma \G(D)$. Thus
\begin{align*}
\Delta & = \langle [\beta_\g(x_1)-\alpha_{\g}(x_1), \;\beta_\g(x_2)+\alpha_{\g^*}(x_2)], \;\alpha(x_3)\rangle \\
&\; + \langle [\beta_{\g^*}(x_1)-\alpha_{\g^*}(x_1), \; \beta_\g(x_2)+\alpha_{\g^*}(x_2)], \;\beta(x_3)\rangle + {\rm c.p.}.
\end{align*}
Using the definition and the ad-invariance of $\lara$ to expand terms,  one has
\[
\Delta = -\langle \alpha(x_1), [\alpha(x_2), \alpha(x_3)]\rangle + \langle \beta(x_1), [\beta(x_2), \beta(x_3)]\rangle.
\]
As $\g \subset \d$ is lagrangian, one has $\langle \beta(x_1), [\beta(x_2), \beta(x_3)]\rangle = 0$. Thus
\[
\Delta  = -\langle \alpha(x_1), [\alpha(x_2), \alpha(x_3)]\rangle = -\langle \theta_\sD^\rt(x_1),\, [\theta_\sD^\rt(x_2), \,
\theta_\sD^\rt(x_3)]\rangle .
\]
\end{proof}

\subsubsection{Proof of Theorem \ref{thm:presymp}}\label{ss:proof-presymp}
As $\l$ is a lagrangian Lie sub-algebra of $(\d, \lara)$, the $2$-form
$\omega_\sL$ on $\G(L)$ coincides with the pullback of $\tau_\sD$ to $\G(L)$. Using again the fact that $\l$ is lagrangian,
it now follows immediately from
Lemma \ref{lem:tauD-1} and  Lemma \ref{lem:domegaD} that $\omega_\sL$ is both multiplicative and closed.
By Lemma \ref{lem:delta-tauD}, the image of ${\rm Lie}(\G(L)) \subset A_\sD$ under $\delta_{\tau_\sD}: A_\sD \to \ttt G$ is precisely
${\bf e}(\l_\sG)$. By Definition \ref{defn:pre}, $(\G(L), \omega_\sL)$ is an integration of the Dirac structure
${\bf e}(\l_\sG)$ on $G$.
This finishes the proof of Theorem \ref{thm:presymp}.

\begin{rem}\label{rem:tauD}
{\rm
As seen in the proof of Theorem \ref{thm:presymp}, the properties of the tensor field $\tau_\sD$ on $\G(D)$ ensure {\it simultaneous integrations}
of all Dirac structures on $G$ coming from lagrangian Lie sub-algebras of $(\d, \lara)$.
It  would be interesting to understand in what sense the pair $(\G(D), \tau_\sD)$ is an {\it integration} of the action Courant algebroid $\d \times G$,
which is isomorphic to the Courant algebroid $\ttt G$.
\hfill $\diamond$
}
\end{rem}

\subsection{Action of a double symplectic groupoid}\label{ss:doublesg}
By choosing  $\l=\g^* \subset \d$ and setting $L=G^*$,
Theorem~\ref{thm:presymp} defines a symplectic groupoid $(\G(G^*),\omega)$, where $\omega :=\omega_{\sG^*}$,
 integrating the Poisson Lie group $(G,\pi_\sG)$.  As  proved in \cite{LuW0}
(see also $\S$\ref{ss:LA-vee}),
$\G(G^*)$ carries an additional Lie groupoid structure over $G^*$, with respect to which $(\G(G^*),\omega)$ is an integration
of the Poisson Lie group $(G^*,-\pi_{\sG^*})$; these two groupoid structures on $\G(G^*)$ make $(\G(G^*),\omega)$ into
a {\em double symplectic groupoid} \cite{Mac0}. Explicitly (compare with $\S$\ref{ss:LA-vee}), the groupoid structure over $G^*$ has
source and target
$(k_1, g_1, g_2, k_2) \mapsto k_2$ and $(k_1, g_1, g_2, k_2) \mapsto k_1^{-1}$,
and multiplication
\[
(k_1, g_1, g_2, k_2)(k_2^{-1},  g_1^\prime, g_2^\prime, k_2^\prime)
 = (k_1, g_1 g_1^\prime, g_2g_2^\prime, k_2^\prime).
\]

By $\S$\ref{ss:LA-vee}, the Lie groupoids $\G(L)$ in \eqref{eq:GL} carry a natural (left) action of $\G(G^*)\rightrightarrows G^*$
along the map $J: \G(L)\to G^*$, $J(k,g_1,g_2,l)=k^{-1}$, via
$\kappa : \G(G^*)\times_{G^*} \G(L)\to \G(L)$,
\begin{equation}\label{eq:actionGL}
\kappa((k_1, g_1, g_2 ,k_2),(k_2^{-1}, g_1',g_2',l)) = (k_1, g_1 g_1', g_2 g_2',l),
\end{equation}
and $\kappa$ is an action of the double Lie groupoid $\G(G^*)$ on the Lie groupoid $\G(L)$, in the sense of \cite[Definition 1.5]{BrownMackenzie}.

Note also that $J: (\G(L),\omega) \to (G^*,-\pi_{\sG^*})$  is a Dirac map: indeed, for $\alpha =w^\lt \in T_{k^{-1}}^*G^*$,
$w\in \g$, the horizontal tangent vector $\widetilde{w}_\gamma$ at $\gamma=(k,g_1,g_2,l) \in \G(L)$ satisfies
\[
(\dif {J})(\widetilde{w}_\gamma )=-\pi _{\sG^*}^\sharp (\alpha )|_{k^{-1}},\qquad i_{\widetilde{w}_\sigma }\omega_\sL = {J}^*\alpha.
\]
By \cite[Lemma~4.8]{BuCr}, if the source fibers of $\G(G^*) \rightrightarrows G^*$ are connected, then $J: (\G(L),\omega) \to (G^*,-\pi_{\sG^*})$
being a Dirac map ensures the identity
\begin{equation}\label{eq:omegacond}
\kappa^* \omega_\sL = {\rm p}_1^*\omega + {\rm p}_2^*\omega_\sL \in \Omega^2(\G(G^*)\times_{G^*} \G(L)),
\end{equation}
where ${\rm p}_1$, ${\rm p}_2$ are the projections of $\G(G^*)\times_{G^*}\G(L)$ onto each factor.
By a calculation similar to that for the proof of Lemma \ref{lem:tauD-1}, which we omit, one can show that \eqref{eq:omegacond} holds without
the connectedness assumption on the source fibers.




\subsection{Explicit integrations of Poisson homogeneous spaces}\label{ss:explicit-GH}
Assume that $(G, \pi_\sG)$ is a Poisson Lie group admitting a Drinfeld double $(D, \phi_\sG)$, and let $(G/H, \pi)$ be a
Poisson homogeneous space of $(G, \pi_\sG)$.
Let $\l = \l_\pi$ be the Drinfeld lagrangian Lie
sub-algebra $\l$ given in \eqref{eq:lpie}, and let ${\bf e}(\l_\sG)$ be the pullback Dirac structure of $\pi$ on $G$.
Let
$(\G(L), \omega_\sL)$ be a pre-symplectic groupoid over $G$ integrating
 the Dirac structure ${\bf e}(\l_\sG)$ as  in Theorem \ref{thm:presymp}.
Assume $\phi_{\sH, \sL}: H \to L$ is an integration of
$\h \hookrightarrow \l$ satisfying
\begin{equation}\label{eq:HL}
\phi_\sL \circ \phi_{\sH, \sL} = \phi_\sG|_\sH: \;\; H \to D.
\end{equation}
Let $H \times H$ act on $\G(L)$ by
\begin{equation}\label{eq:HxH:action}
(h_1,h_2)\cdot (v,g_1,g_2,l)= (v,\,  g_1h^{-1}_1,\, g_2h_2^{-1},\, \phi_{\sH, \sL}(h_1) \,l\,\phi_{\sH, \sL}(h_2)^{-1}).
\end{equation}
By Theorem \ref{th:inte-BHl}, one has the quotient groupoid $\overline{\G(L)} =\G(L)/(H \times H)\rightrightarrows G/H$ integrating the
 Lie algebroid $T^*_\pi(G/H)$. Let $p: \G(L) \to\overline{\G(L)}$ be the projection.

\begin{them}\label{thm:GH}
Assuming \eqref{eq:HL}, the pre-symplectic groupoid $(\G(L), \omega_\sL)$ is an $H$-admissible integration of the Dirac structure
${\bf e}(\l_\sG)$ on $G$
(see Definition \ref{defn:admi1}).
Consequently, there is a unique symplectic structure $\overline{\omega}$ on $\overline{\G(L)}$ such that
$p^* \overline{\omega} = \omega_\sL$, and  $(\overline{\G(L)}, \overline{\omega})$ is a symplectic groupoid
integrating the Poisson homogeneous space $(G/H, \pi)$.
\end{them}

\begin{pf} By Theorem \ref{th:inte-EF}, it remains to check the $(H \times H)$-invariance of $\omega_\sL$.
Let  $h=(h_1,h_2)\in H\times H$ and we write $h_i$ for $\phi_{\sH, \sL}(h_i)$, $i = 1, 2$,  for notational simplicity.
Let  $\Phi_h: \G(L) \to \G(L)$ be given by
$$
\Phi_h(k,g_1,g_2,l) = (k, g_1h^{-1}_1,g_2h_2^{-1},{h_1}\,l\,{h_2}^{-1}).
$$
With the notation as in \eqref{eq:severaform}, one checks directly that
\[
\Phi_h^*\theta^\lt_{\sGo}= \Ad_{h_1}\theta^\lt_{\sGo}, \hs
\Phi_h^*\theta^\rt_{\sL}= \Ad_{h_1}\theta^\rt_{\sL}, \hs
\Phi_h^*\theta^\rt_{\sG^*} = \theta^\rt_{\sG^*}, \hs  \Phi_h^*\theta^\rt_{\sGt}= \theta^\rt_{\sGt}.
\]
Hence $\Phi_h^*\omega_\sL=\omega_\sL$ by the $\Ad$-invariance of $\lara$.
\end{pf}


\smallskip
In the context of Theorem \ref{thm:GH},
just as for $\G(L)$ discussed in $\S$\ref{ss:doublesg} and by $\S$\ref{ss:LA-vee},
the groupoid morphism $J: \G(L)\to G^*$, $(k,g_1,g_2,l)\mapsto k^{-1}$ is $(H\times H)$-invariant
and gives rise to $\overline{J}: \overline{\G(L)}\to G^*$, and the action $\kappa$ in \eqref{eq:actionGL} descends to an action
$$
\overline{\kappa}: \G(G^*)\times_{G^*} \overline{\G(L)}\to \overline{\G(L)}
$$
of the double Lie groupoid $\G(G^*)$ covering the action map $G\times (G/H)\to G/H$.
The fact that $J$ is a Dirac map  implies that $\overline{J}: (\overline{\G(L)}, \overline{\omega}) \to (G^*,-\pi_{\sG^*})$ is Poisson. By
\eqref{eq:omegacond}, $\overline{\kappa}^*\overline{\omega} =
{\rm p}_1^*\omega_{\scriptscriptstyle{\G(G^*)}} + {\rm p}_2^*\overline{\omega}$, so
$\kappa$ is a {\em symplectic action} in the sense of \cite[Definition 3.7]{MikWein}.

\subsection{The holomorphic setting}\label{s:hol}
The concepts of Lie groupoids, Lie algebroids, and integrability of Lie algebroids and Poisson structures generalize straightforwardly to the holomorphic category, and we refer to \cite{GSX:hol-Poi,GSX:groupoids} for details.
By \cite[Lemma 2.3]{GSX:hol-Poi}, the real and imaginary parts $\pi_\sR$, $\pi_{\sI} \in \Gamma(\wedge^2 TM)$
of  a holomorphic Poisson structure
\[
\pi = \pi_{\sR} + i \pi_{\sI} \in \Gamma(\wedge^2 T^{1, 0}M)
\]
 on a complex manifold $M$
are real Poisson structures on $M$ regarded as a real manifold.
Moreover,  if $\phi: (M, \pi) \to (M^\prime, \pi^\prime)$ is a Poisson map of complex Poisson manifolds,
then $\phi: (M, \pi_\sR) \to (M^\prime, \pi_{\sR}^\prime)$ is a Poisson map of real Poisson manifolds.
It is shown in \cite[Theorem 3.22]{GSX:groupoids} that
$\pi$ is integrable as a holomorphic Poisson structure
if and only if $\pi_\sR$ (or equivalently $\pi_\sI$) is integrable as a real Poisson structure.

\begin{them}\label{thm:main-cplx}
Every complex Poisson homogeneous space $(G/H, \pi)$ of any complex Poisson Lie group $(G, \pi_\sG)$ is integrable.
\end{them}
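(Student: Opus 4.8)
The plan is to reduce the holomorphic statement to the real Main Theorem (Theorem~\ref{thm:main}) through the real/imaginary-part correspondence for holomorphic Poisson structures recalled at the start of $\S$\ref{sec:hol}. Writing $\pi_\sG = \pi_{\sG,\sR} + i\,\pi_{\sG,\sI}$ and $\pi = \pi_\sR + i\,\pi_\sI$ for the decompositions into real and imaginary parts, the first step is to verify that, upon regarding $G$, $H$ and $G/H$ as real manifolds, the real part $\pi_{\sG,\sR}$ makes $(G, \pi_{\sG,\sR})$ a real Poisson Lie group and $(G/H, \pi_\sR)$ a real Poisson homogeneous space of it.

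To establish this, I would use \cite[Lemma 2.3]{GSX:hol-Poi}, which already guarantees that $\pi_{\sG,\sR}$ and $\pi_\sR$ are genuine real Poisson structures, together with the accompanying fact (also recorded in $\S$\ref{sec:hol}) that the real part of a holomorphic Poisson map is a real Poisson map. Applying the latter to the multiplication $m\colon (G, \pi_\sG) \times (G, \pi_\sG) \to (G, \pi_\sG)$---or equivalently inspecting the multiplicativity identity \eqref{eq:pimult}, whose defining operators $\lt_g$ and $\rt_h$ are complex-linear and hence commute with the splitting into real and imaginary parts---shows that $\pi_{\sG,\sR}$ is multiplicative, so $(G, \pi_{\sG,\sR})$ is a real Poisson Lie group. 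Likewise, applying it to the action map $(G, \pi_\sG) \times (G/H, \pi) \to (G/H, \pi)$, which is a holomorphic Poisson map by the very definition of a complex Poisson homogeneous space, shows that $(G, \pi_{\sG,\sR}) \times (G/H, \pi_\sR) \to (G/H, \pi_\sR)$ is a real Poisson map; thus $\pi_\sR$ is a $(G, \pi_{\sG,\sR})$-homogeneous Poisson structure on the real homogeneous space $G/H$.

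With this reduction in place, the real Main Theorem (Theorem~\ref{thm:main}) immediately yields that $(G/H, \pi_\sR)$ is integrable as a real Poisson manifold. Finally, I would invoke \cite[Theorem 3.22]{GSX:groupoids}, according to which a holomorphic Poisson structure is integrable precisely when its real part is integrable as a real Poisson structure, to conclude that $\pi$ is integrable as a holomorphic Poisson structure. I do not anticipate a genuine obstacle here: the entire content of the argument is the translation between the holomorphic and real categories, while all the substantive integrability work has already been carried out in Theorem~\ref{thm:main}. The only point demanding care is the first step---checking that the multiplicativity of $\pi_\sG$ and the homogeneity condition for $\pi$ survive passage to real parts---but this is routine bookkeeping, settled by the two cited results of \cite{GSX:hol-Poi, GSX:groupoids}.
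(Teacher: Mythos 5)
Your proposal is correct and follows exactly the paper's own argument: pass to real parts via \cite[Lemma 2.3]{GSX:hol-Poi} and the fact that real parts of holomorphic Poisson maps are real Poisson maps, apply Theorem~\ref{thm:main} to $(G/H,\pi_\sR)$, and conclude with \cite[Theorem 3.22]{GSX:groupoids}. The only difference is that you spell out the routine verification of multiplicativity and homogeneity for the real parts, which the paper leaves implicit.
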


\begin{proof}
Let $\pi_{\sG, \sR}$ and $\pi_\sR$ be respectively the real parts of $\pi_{\sG}$ and  $\pi$. Then $(G, \pi_{\sG, \sR})$ is a
real Poisson Lie group, and $(G/H, \pi_\sR)$ is a (real) Poisson homogeneous space of
$(G, \pi_{\sG, \sR})$. By Theorem \ref{th:main-intro}, $\pi_{\sR}$ is integrable as a real Poisson structure on $G/H$.
By \cite[Theorem 3.22]{GSX:groupoids},
$\pi$ is integrable as a holomorphic Poisson structure on $G/H$.
\end{proof}

 To describe explicit integrations, define a {\it holomorphic Dirac structure} on a complex manifold $M$
as a holomorphic involutive lagrangian sub-bundle $E$ of the
holomorphic Courant algebroid $\ttt^{1,0} M:=T^{1, 0}M\oplus (T^{1, 0}M)^*$.
An {\em integration} of a holomorphic Dirac structure $E$ is a holomorphic pre-symplectic groupoid $(\G,\omega)$ over $M$, integrating
$E$ as a holomorphic Lie algebroid, such that the target map is a Dirac map (see \cite{BDN}).
With the obvious holomorphic analog of Definition \ref{defn:admi1} for $H$-admissibility of pre-symplectic groupoids,
Theorem \ref{th:inte-EF} also holds in the holomorphic setting.

A complex Poisson Lie group $(G, \pi_\sG)$ gives rise to a  complex Lie bialgebra $(\g, \delta_*)$; its Drinfeld
double Lie algebra $(\d, \lara_\mathbb{C})$ is now a complex quadratic Lie algebra.
We have a natural holomorphic analog of the isomorphism ${\bf e}$ in \eqref{eq:I}, which defines,
for each complex lagrangian Lie sub-algebra $\l$ of $(\d, \lara_\mathbb{C})$, a holomorphic (affine) Dirac structure ${\bf e}(\l_\sG)$ on $G$.
Assuming that $(G, \pi_\sG)$ admits a Drinfeld  double $(D, \phi_\sG)$ (defined by the natural holomorphic analog of Definition \ref{defn:D-double}),
the groupoid $\G(L)$ over $G$ given in \eqref{eq:GL} is holomorphic,
and the same formula \eqref{eq:severaform} defines a holomorphic $2$-form
\[
\omega_\sL = \frac{1}{2}\langle{\theta^\lt_{\sGo} \wedge \theta^\rt_{\sL}}\rangle_\mathbb{C} + \frac{1}{2}\langle{\theta^\rt_{\sG^*} \wedge\theta^\rt_{\sGt}}\rangle _\mathbb{C}
\]
on $\G(L)$.
Verbatim arguments in
the proofs of Theorem \ref{thm:presymp}  and
Theorem \ref{thm:GH} show that they also hold in the holomorphic category. We summarize the results as follows:

\begin{them}\label{thm:hol}
(1) For any complex lagrangian Lie sub-algebra $\l$ of $(\d, \lara_\mathbb{C})$, $(\G(L), \omega_\sL)$ is a holomorphic pre-symplectic
groupoid integrating the holomorphic $(G, \pi_\sG)$-affine
Dirac structure ${\bf e}(\l_\sG)$ on $G$;

(2) Let $(G/H, \pi)$ be a complex Poisson homogeneous space of $(G, \pi_\sG)$, and let $\l$ be the associated complex Drinfeld lagrangian Lie sub-algebra.
If there exists a holomorphic integration $\phi_{\sH, \sL}: H \to L$ of the Lie algebra inclusion $\h \to \l$
such that \eqref{eq:HL} holds, then one has the quotient holomorphic symplectic groupoid $(\G(L)/(H \times H), \overline{\omega})$,
defined the same way as in Theorem \ref{thm:GH}, which integrates the complex Poisson manifold $(G/H, \pi)$.
\end{them}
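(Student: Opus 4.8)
The plan is to prove both parts by transferring the explicit real constructions of $\S$\ref{subsec:explicit-EG} and $\S$\ref{subsec:explicit-GH} to the holomorphic category, the crucial observation being that every object entering those constructions---the groupoid $\G(L)$ of \eqref{eq:GL}, the $2$-form $\omega$ of \eqref{eq:severaform}, and the morphism $\Psi_\sG$ of \eqref{eq:PsiH}---is built from holomorphic data by holomorphic operations, so that the real proofs apply essentially word for word once holomorphicity is verified at each stage.

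For part (1), I would first observe that $\G(L)$ is a holomorphic Lie groupoid: it is the fibered product of the holomorphic groupoid morphism $(G^*)^{\scriptscriptstyle{op}}\times(G\times G)\to D$ with the holomorphic homomorphism $\phi_\sL\colon L\to D$, and since the former is a holomorphic submersion the fibered product is a holomorphic embedded subgroupoid. Next I would redo the computation of Proposition \ref{prop:GL} using holomorphic Maurer--Cartan forms and the holomorphic dressing action, identifying the holomorphic Lie algebroid of $\G(L)$ with the holomorphic action algebroid $\l\ltimes G$; this step is purely algebraic and transfers verbatim. The $2$-form defined by \eqref{eq:severaform} is holomorphic because the Maurer--Cartan forms are now holomorphic and $\lara_\mathbb{C}$ is $\mathbb{C}$-bilinear; its closedness and multiplicativity, together with the statement that $\t$ is a Dirac map and the kernel condition, follow from the identities in the proof of Theorem \ref{thm:presymp}, which involve only formal manipulations of invariant forms and hence hold in the holomorphic setting. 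This establishes that $(\G(L),\omega)$ integrates $E=I(\l_\sG)$ holomorphically.

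For part (2), I would verify the holomorphic analog of $H$-admissibility (Definition \ref{def:Hcomp1}). The morphism $\Psi_\sG$ of \eqref{eq:PsiH} is holomorphic whenever $\phi_{\sH,\sL}$ is, and under \eqref{eq:HL} it integrates the holomorphic Lie algebroid morphism $\psi_\sG$ of \eqref{eq:psi}. The $(H\times H)$-invariance of $\omega$ is exactly the computation in the proof of Theorem \ref{thm:GH}, which uses only \eqref{Gond-1}, the multiplicativity \eqref{eq:pimult} of $\pi_\sG$, and the anti-Poisson property of inversion---all holomorphic identities---so it carries over unchanged. Hence $(\G(L),\omega)$ is a holomorphic $H$-admissible integration of $E$, and invoking the holomorphic version of Theorem \ref{thm:quot} yields the holomorphic symplectic groupoid $(\G(L)/(H\times H),\overline{\omega})$ integrating $(G/H,\pi)$.

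The main obstacle is not any single computation but ensuring holomorphicity of the \emph{quotient}: I must know that $\G(L)/(H\times H)$ is a complex manifold and that $\overline{\omega}$ is a genuine holomorphic symplectic form. This reduces to the holomorphic analogs of Lemma \ref{lem:princial} and Proposition \ref{prop:quot}: using holomorphic local sections of the holomorphic principal bundle $G\times G\to (G/H)\times(G/H)$, the $(H\times H)$-equivariant holomorphic map $(\t,\s)$ shows that the $(H\times H)$-action on $\G(L)$ is holomorphically principal, so the quotient is complex and $p$ is a holomorphic submersion. Once this is in place, the descent of $\omega$ to a holomorphic symplectic $\overline{\omega}$ with $p^*\overline{\omega}=\omega$, the multiplicativity of $\overline{\omega}$, and the fact that $\overline{\t}$ is a holomorphic Poisson map all follow formally as in the proof of Theorem \ref{thm:quot}.
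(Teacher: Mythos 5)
Your proposal is correct and follows essentially the same route as the paper: the paper's proof of Theorem~\ref{thm:hol} consists precisely of observing that $\G(L)$, the form \eqref{eq:severaform}, and the morphism \eqref{eq:PsiH} are built holomorphically from holomorphic data, that the arguments of Proposition~\ref{prop:GL}, Theorem~\ref{thm:presymp} and Theorem~\ref{thm:GH} transfer verbatim, and that Proposition~\ref{prop:quot} and Theorem~\ref{thm:quot} hold in the holomorphic setting. Your extra care about holomorphicity of the quotient (via holomorphic local sections in the analog of Lemma~\ref{lem:princial}) fills in a point the paper merely asserts, and is the right way to do it.
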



\section{Examples: old and new}\label{s:examples}


\subsection{Special cases and previous examples}\label{ss:previous}
In this section, we consider various particular cases of Poisson homogeneous spaces,  comparing our construction of their integrations with results in the literature.

\subsubsection{A case of global $L$-action}\label{ss:complete}
We will compare our integration with the one in \cite{Lu-note}.
Assume that $(G, \pi_\sG)$ admits a connected Drinfeld double $D$ such that $G \subset D$ is a closed subgroup.
Let
$(G/H, \pi)$ be a Poisson homogeneous space of $(G, \pi_\sG)$ with Drinfeld lagrangian Lie sub-algebra $\l$. Let $L$
be the connected Lie subgroup of $D$ with Lie algebra $\l$, and suppose that the dressing
action of $\l$ on $G$ integrates to a left action $(l,g)\mapsto {}^l\!g$ of $L$ on $G$.
Assume furthermore that $H \subset L$ is a closed subgroup and that
$^h \kern-2pt g=gh^{-1}$ for $h\in H$. Let  $(H \times H)$ act on $G \times L$ by
$(h_1, h_2)\cdot (g, \, l)  = (gh_1^{-1}, \, h_2lh_1^{-1})$.
It is shown in \cite{Lu-note} that the quotient $G \times_H (L/H)$ is a symplectic groupoid\footnote{It is actually verified in \cite{Lu-note}  that $G\times_H (L/H)$ is a Poisson groupoid that is symplectic near the identity section; but then \cite[Theorem 5.3]{Mac-Xu} implies that it is a symplectic groupoid.} of $(G/H, \pi)$,
where the groupoid structure on $G \times_H (L/H)$ is the unique one such that the projection $G \times L \to G \times_H (L/H)$ from the action groupoid
$G \times L$ is a groupoid morphism.

Let $G^*$ be the connected subgroup of $D$ with Lie algebra $\g^*$, and recall from \cite[Lemma 5.15]{Lu-note}
that $(^l g)l{g}^{-1}\in G^*$ for any $g\in G$ and $l\in L$.
It follows that
\begin{equation}\label{eq:F}
F:\;\; G\times L\to \G(L), \;\; (g, l)\mapsto (gl^{-1}(^lg)^{-1},\, ^l\kern-2pt g,g,l),
\end{equation}
is $(H \times H)$-equivariant, inducing a local diffeomorphism of groupoids $\overline{F}$ from $G\times_H (L/H)$ to $\G(L)/(H\times H)$.
The pullback by $\overline{F}$ of the symplectic form on $\G(L)/(H\times H)$ agrees with the one on $G\times_H (L/H)$, since they integrate the same Poisson structure and
$G\times_H (L/H)$ is source-connected. Thus $\overline{F}$ is a local diffeomorphism of symplectic groupoids.

When $G^*\times G\to D$, $(v, g) \mapsto \bar{v}\bar{g}$ is a diffeomorphism, the map $F$ in \eqref{eq:F} is a groupoid isomorphism
from $L\ltimes G$ to $\G(L)$, and $\overline{F}$ is an isomorphism of symplectic groupoids.

%

\subsubsection{Relatively complete Poisson homogeneous spaces}\label{ss:relative}
We now consider the construction in \cite{BCST}.
Let $(G/H, \pi)$ be a Poisson homogeneous space of a Poisson Lie group $(G, \pi_\sG)$ satisfying $\pi|_{eH}=0$.
Then $\textrm{Ann}(\h) = \{\xi \in \g^*\,|\,\xi|_\h = 0\}$ is a Lie sub-algebra of $\g^*$, and the corresponding Drinfeld
lagrangian Lie sub-algebra is $\l = \h + \mathrm{Ann}(\h)$ as a vector space.
Let $G^*$ be the connected Lie subgroup of $D$ with Lie algebra $\g^*$, and
let $H^\perp$ be connected Lie subgroup of $G^*$ with Lie algebra $\textrm{Ann}(\h)$.
 Following \cite{BCST}, $(G/H, \pi)$ is said to be {\it relatively complete} if the group multiplication of $D$ gives a diffeomorphism
$H \times H^\perp \to L$, where $L$ is the connected subgroup of $D$ with Lie algebra $\l$.

Assume that $(G/H, \pi)$ is relatively complete.
Consider the map $J\colon \G(G^*)\to G^*$,  $J(k_1,g_1,g_2,k_2)=k_1^{-1}$, as defined in $\S$\ref{ss:doublesg}.
Since $H^\perp$ is a coisotropic subgroup of $G^*$, using that $J$ is a groupoid morphism from $\G(G^*)\rightrightarrows G$ to $G^*$
and a Poisson map into $(G^*,-\pi _{G^*})$, we see that $J^{-1}(H^\perp)$ is a coisotropic sub-groupoid of $\G(G^*)\rightrightarrows G$.
It is proven in \cite[Theorem~15]{BCST} that the coisotropic reduction of $J^{-1}(H^\perp)$ agrees with the quotient by the
free and proper action of $H$ on  $J^{-1}(H^\perp)$ by
\[
h \cdot  (k_1,g_1,g_2,k_2)= (k_1,g_1 ((h^{-1})^{k_2}),g_2h^{-1},(^{h^{-1}} \kern-2pt k_2)),
\]
where for
$h\in H$ and $v\in H^\perp$, $^h\kern-1pt v \in H^\perp$ and $h^v\in H$ are the unique elements such that
$hv=(^h\kern-1pt v )(h^v)\in L$.
By \cite{BCST}, the quotient $J^{-1}(H^\perp)/H$ is a symplectic groupoid integrating $(G/H,\pi)$.
On the other hand,  the assumptions for Theorem \ref{thm:GH} are all satisfied, so we have a symplectic
groupoid $(\G(L)/(H \times H), \overline{\omega})$ integrating $(G/H, \pi)$.
A direct calculation shows that we have the groupoid isomorphism
$J^{-1}(H^\perp)/H \to \G(L) / (H\times H )$  given by
$ (k_1,g_1,g_2,k_2)H \mapsto    (k_1,g_1,g_2,k_2) (H\times H)$.



\subsubsection{Poisson homogeneous spaces from coisotropic sub-algebras}\label{subss:LBS}
Let $(\d, \lara)$ be a quadratic Lie algebra, and let $D$
be any Lie group  integrating $(\d, \lara)$. Let $Q$ be any closed subgroup of $D$ such that
the Lie algebra $\q$ of $Q$ is coisotropic with respect to $\lara$, i.e., $\q^\perp:=\{a \in \d\,|\, \langle a, \q\rangle = 0\}\subset \q$.
We first recall from \cite{LY:DQ} a construction of Poisson structures on $D/Q$ and the associated Poisson homogeneous spaces.

Assume that $\d = \g +\g'$ is a lagrangian splitting of $(\d, \lara)$. Identifying $\g' \cong \g^*$ via $\lara$, we write the splitting as $\d = \g + \g^*$.
Recall from $\S$\ref{ss:Drinfeld-double} that such a splitting gives rise to the element $R \in \wedge^2 \d$ and the multiplicative
Poisson structure $\pi_\sD = R^\rt - R^\lt$ on $D$.  Let $p_{\sD/\sQ}: D\to D/Q$ be the projection.
By \cite[Theorem 2.3]{LY:DQ}, $\pi :=  p_{\sD/\sQ}(R^\rt)$
 is a well-defined Poisson bivector field on $D/Q$. It also follows from the definition that
$(D/Q, \pi)$ is a Poisson homogeneous space of the Poisson Lie group $(D, \pi_\sD)$.

\begin{defn}\label{def:pi-DQ}
{\rm
We refer to $\pi = p_{\sD/\sQ}(R^\rt)$ as the Poisson structure on $D/Q$ defined by the lagrangian splitting
$\d = \g +\g^*$.
\hfill $\diamond$
}
\end{defn}

Recall from Lemma \ref{lem:G-D-Poi} and Remark \ref{re:Gs} that
any integrations $\phi_\sG: G \to D$ and $\phi_{\sG^*}: G^* \to D$ of the  respective
 Lie algebra inclusions $\g \to \d$ and $\g^* \to \d$ give rise to the pair $(G, \pi_\sG)$ and $(G^*, \pi_{\sG^*})$ of dual Poisson Lie groups,
where $\pi_\sG$ and $\pi_{\sG^*}$ are
respectively given in \eqref{eq:piG-bar} and \eqref{eq:piGs}, and that
\[
\phi_\sG: (G, \pi_\sG) \to
(D, \pi_\sD) \hs \mbox{and} \hs \phi_{\sG^*}: (G^*, \pi_{\sG^*}) \to (D, -\pi_\sD)
\]
 are Poisson Lie group homomorphisms.
Let $G$ and $G^*$ act on $D/Q$ through  $\phi_\sG$ and $\phi_{\sG^*}$.
The following fact from \cite{LY:DQ} will be the basis for most of the examples of Poisson homogeneous spaces
related to semi-simple Lie groups we will look at in $\S$\ref{ss:semisimple}.

\begin{prop}\label{prop:GG-DQ}\cite[Theorem 2.3 and Proposition 2.5]{LY:DQ}
Every $G$-orbit $\O$ (resp. $G^*$-orbit $\O'$) in $D/Q$ is a Poisson sub-manifold with
respect to $\pi$. Consequently, $(\O, \pi|_{\sO})$ (resp. $(\O^\prime, -\pi|_{\sO^\prime})$) is a Poisson homogeneous space of
$(G, \pi_\sG)$ (resp. of $(G^*, \pi_{\sG^*})$). Moreover, for $d \in D$, identifying the $G$-orbit through $dQ \in D/Q$ with $G/H$, where
$H = G \cap dQd^{-1}$, the associated Drinfeld lagrangian Lie sub-algebra is $(\Ad_d \q^\perp) + \g \cap \Ad_d \q \subset \d$.
\end{prop}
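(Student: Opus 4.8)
The plan is to trivialize everything at the identity of the double and reduce all three assertions to linear algebra in $(\d,\lara)$. Since $p\circ \rt_q = p$ for $q\in Q$, the right-invariant bivector $R^\rt$ is $p$-projectable, so $\pi = p_{\sD/\sQ}(R^\rt)$ is well defined; moreover, right translation identifies $T_{dQ}(D/Q)\cong \d/\Ad_d\q$ (the kernel of $\dif p_d\circ \dif\rt_d$ being exactly $\Ad_d\q$), and under this identification $\pi|_{dQ}$ becomes the image of $R$ in $\wedge^2(\d/\Ad_d\q)$, while the tangent space to the $G$-orbit (resp.\ $G^*$-orbit) through $dQ$ becomes $(\g+\Ad_d\q)/\Ad_d\q$ (resp.\ $(\g^*+\Ad_d\q)/\Ad_d\q$), because the infinitesimal generators of the $G$-action on $D$ are the right-invariant fields $u^\rt$, $u\in\g$. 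Throughout I will use the elementary identity $R^\sharp = \tfrac12(\pr_\g-\pr_{\g^*})$, obtained by contracting $R$ in \eqref{eq:R-d} against $\g$ and $\g^*$.

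For the first assertion I would dualize: the $G$-orbit is a Poisson submanifold iff the image of $R$ in $\wedge^2(\d/\Ad_d\q)$ lies in $\wedge^2\big((\g+\Ad_d\q)/\Ad_d\q\big)$, which by $\lara$-duality amounts to $R(a,b)=0$ for all $a,b\in (\g+\Ad_d\q)^\perp = \g\cap\Ad_d\q^\perp$. Since $a,b\in\g$, one has $R(a,b)=\tfrac12\la b,a\ra=0$ by isotropy of $\g$; the $G^*$-case is identical using isotropy of $\g^*$. For the second assertion I would first establish that $(D/Q,\pi)$ is $(D,\pi_\sD)$-homogeneous: writing the action as $p\circ m$ on $D\times D$ and using multiplicativity of $\pi_\sD$ together with the commutation of left and right translations, a short computation yields $m_*(\pi_\sD\times R^\rt)=R^\rt$, whence $\mathrm{act}_*(\pi_\sD\times\pi)=\pi$. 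Composing with the Poisson homomorphism $\phi_\sG$ (Lemma \ref{lem:Gs}) shows the $G$-action on $(D/Q,\pi)$ is Poisson, and restricting to a single orbit — a Poisson submanifold by the first assertion — exhibits $(\O,\pi|_\sO)$ as a Poisson homogeneous space of $(G,\pi_\sG)$. For $G^*$ one uses instead that $\phi_{\sG^*}$ is anti-Poisson into $(D,\pi_\sD)$ by \eqref{eq:phiphi}, which after negating all structures produces $(\O',-\pi|_{\O'})$ as a $(G^*,\pi_{\sG^*})$-homogeneous space.

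The main work — and the step I expect to be delicate — is the third assertion. I would first check purely algebraically that $\l_d:=\Ad_d\q^\perp + (\g\cap\Ad_d\q)$ is a lagrangian subalgebra: it is isotropic because $\q^\perp\subset\q$ forces $\Ad_d\q^\perp$ to be isotropic and $\lara$-orthogonal to $\g\cap\Ad_d\q\subset\Ad_d\q$; it has dimension $\tfrac12\dim\d$ by a count using $(\g+\Ad_d\q)^\perp=\g\cap\Ad_d\q^\perp$; and it is a subalgebra because $\q^\perp$ is an ideal of $\q$ (ad-invariance of $\lara$). One also checks $\l_d\cap\g=\g\cap\Ad_d\q=\h$ and that $\l_d$ is $\Ad_H$-invariant, since $H\subset G\cap dQd^{-1}$ preserves both $\g$ and $\Ad_d\q$. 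It then remains to identify $\l_d$ with the Drinfeld lagrangian subalgebra \eqref{eq:lpie} of $(G/H,\pi|_\sO)$: taking any $z\in\l_d$ and setting $X=\pr_\g z$, $\xi=\pr_{\g^*}z$, the membership $\xi\in\mathrm{Ann}(\h)$ follows from isotropy of $\l_d$ (so $\la w,z\ra=0$ for $w\in\h$), while the sharp-map condition $\pi_\sO^\sharp(\beta)=X+\h$ reduces, in the $\d/\Ad_d\q$ picture and using a representative $a\in\Ad_d\q^\perp$ of the covector $\beta$, to the identity $[R^\sharp a]=[X]$; the latter follows from $R^\sharp=\tfrac12(\pr_\g-\pr_{\g^*})$ together with the crucial fact that $a\in\Ad_d\q^\perp\subset\Ad_d\q$ (coisotropy again), so that $[a]=0$ in $\d/\Ad_d\q$. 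This gives $\l_d$ contained in, hence equal (by equality of dimensions) to, the Drinfeld subalgebra. The delicate point throughout is keeping the right-trivialization, the various $\lara$-orthocomplement identifications, and the factors of $\tfrac12$ mutually consistent; the single geometric input that makes the sharp-map matching succeed is precisely the coisotropy $\q^\perp\subset\q$, which guarantees $\Ad_d\q^\perp\subset\Ad_d\q$.
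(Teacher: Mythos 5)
The paper does not prove this proposition; it quotes it from \cite[Theorem 2.3 and Proposition 2.5]{LY:DQ}, so your argument has to stand on its own. Your overall strategy is the right one: right-trivialize so that $T_{dQ}(D/Q)\cong\d/\Ad_d\q$, so that $\pi|_{dQ}$ becomes the class of $R$ and everything reduces to linear algebra with $R^\sharp=\tfrac12(\pr_\g-\pr_{\g^*})$. Your second assertion (homogeneity via $m_*(\pi_\sD\times R^\rt)=R^\rt$ and composition with $\phi_\sG$, $\phi_{\sG^*}$) and your third assertion (matching $\Ad_d\q^\perp+(\g\cap\Ad_d\q)$ against \eqref{eq:lpie}, with coisotropy supplying $[a]=0$ in $\d/\Ad_d\q$) both check out.

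There is, however, a genuine error in your proof of the first assertion. Writing $V=\d/\Ad_d\q$ and $U=(\g+\Ad_d\q)/\Ad_d\q$, the condition that the class $\bar R$ of $R$ lie in $\wedge^2 U$ is \emph{not} equivalent to $\bar R(\gamma,\gamma')=0$ for $\gamma,\gamma'$ in the annihilator of $U$; that weaker condition only characterizes the larger space $U\wedge V$. (For instance, in $\d=\R u\oplus\R\xi$ with $\la u,\xi\ra=1$ and $\q=0$, the tensor $R=\tfrac12\xi\wedge u$ pairs to zero on $\g\times\g$ by antisymmetry, yet $R\notin\wedge^2\g=0$.) The correct dual formulation is $i_\gamma\bar R=0$ for \emph{all} $\gamma\in\mathrm{Ann}(U)\cong\g\cap\Ad_d\q^\perp$ paired against \emph{all} of $V^*$, i.e.\ $R^\sharp(\g\cap\Ad_d\q^\perp)\subset\Ad_d\q$. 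This is exactly where the coisotropy of $\q$ must enter, and the fact that your argument for assertion one never uses $\q^\perp\subset\q$ is the telltale sign that something is off: without coisotropy the conclusion is false. The repair is immediate with tools you already use elsewhere: for $\gamma\in\g\cap\Ad_d\q^\perp$ one has $R^\sharp\gamma=\tfrac12\gamma$ (since $\pr_{\g^*}\gamma=0$), and $\gamma\in\Ad_d\q^\perp\subset\Ad_d\q$ by coisotropy, so $i_\gamma\bar R=0$ in $V$. With that one step replaced (and the same fix applied verbatim to the $G^*$-orbits, using $R^\sharp\gamma=-\tfrac12\gamma$ for $\gamma\in\g^*$), your proof is complete.
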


\begin{numex}\label{ex:DQ-Lag}
{\rm
Suppose that the Lie algebra $\q$ of $Q$ is lagrangian with respect to $\lara$, and let $\O$ be any $G$-orbit in $D/Q$. Pick any $d \in \O$
and identify $\O = G/H$, where $H = G \cap dQd^{-1}$. By Proposition \ref{prop:GG-DQ}, the Drinfeld lagrangian Lie sub-algebra associated to $(G/H, \pi|_\sO)$ is
$\l = \Ad_d \q$. Taking $L = dQd^{-1}$, which is a closed subgroup of $D$ with Lie algebra  $\l$ and contains $H$, and applying Theorem \ref{thm:GH}
(and Theorem \ref{thm:hol} if we are in the holomorphic category), we then obtain
a symplectic groupoid integrating the Poisson manifold $(\O, \pi|_\sO)$.  We will give more
concrete examples in $\S$\ref{ss:semisimple}.
\hfill $\diamond$
}
\end{numex}

Consider now $(D/Q, \pi)$ as a Poisson homogeneous space of the Poisson Lie group $(D, \pi_\sD)$. In \cite{LB-S}, D. Li-Bland and P. \v{S}evera
constructed a symplectic groupoid of $(D/Q, \pi)$ as the moduli spaces of flat connections over certain quilted surfaces. We now compare our construction
with the one in \cite{LB-S}.

Consider the quadratic Lie algebra $(\d^{(2)}, \lara^{(2)})$, where $\d^{(2)} = \d \oplus \d$, and
\[
\langle a_1 + a_2, \, b_1 + b_2 \rangle^{(2)} = \langle a_1, \, b_1\rangle - \langle a_2, \, b_2\rangle, \hspace{.2in} a_1, a_2, b_1, b_2 \in \d.
\]
Then $\d^{(2)} = \d_{\rm diag} + \d^\prime$ is a lagrangian splitting of  $(\d^{(2)}, \lara^{(2)})$, where $\d_{\rm diag} = \{(a, a)|a \in \d\}$
and $\d^\prime = \g^* \oplus \g$. Identifying $D \cong D_{\rm diag}$, then  $(\d^{(2)}, \lara^{(2)})$ is the
Drinfeld double Lie algebra of the Poisson Lie group $(D, \pi_\sD)$, and $D \times D$  is
a Drinfeld double of $(D, \pi_\sD)$. The Drinfeld lagrangian Lie sub-algebra
 for $(D/Q, \pi)$, as a Poisson homogeneous space of $(D, \pi_\sD)$, is
$\l = \{(a_1, a_2)\in \q \oplus \q| a_1-a_2 \in \q^\perp\} \subset \d^{(2)}$.
The invariance of $\lara$ implies that $\q^\perp$ is an ideal of  $\q$.  Let $Q^\perp$ be the connected Lie subgroup of $Q$
with Lie algebra
$\q^\perp$. Then $Q^\perp$ is also a normal subgroup of $Q$. Let
\[
L = \{(q_1, q_2)\in Q \times Q \, | \, q_1q_2^{-1} \in Q^\perp\} \subset D \times D.
\]
Then $L$ is a Lie subgroup of $D \times D$ with Lie algebra $\l$ and $Q \cong Q_{\rm diag} = \{(q, q)|q \in Q\} \subset L$. Applying Theorem
\ref{thm:GH} (or Theorem \ref{thm:hol} if we are in the holomorphic category), we arrive at a symplectic groupoid $(\overline{\G}, \bar{\omega})$
integrating the Poisson manifold $(D/Q, \pi)$. More precisely, the groupoid $\overline{\G}$ has the total space
\begin{align*}
\overline{\G} &= \{((k, g), (d_1, d_1), (d_2, d_2), (q_1, q_2)) \in (G^* \times G) \times D_{\rm diag} \times D_{\rm diag} \times (Q \times Q)|\\
&\hspace{.25in}q_1q_2^{-1} \in Q^\perp, \;  (kd_1, gd_1) = (d_2q_1^{-1}, d_2q_2^{-1})\}/(Q \times Q),
\end{align*}
where the quotient is by the $(Q \times Q)$-action
\begin{align*}
&(x_1, x_2) \cdot ((k, g), (d_1, d_1), (d_2, d_2), (q_1, q_2)) \\
&=  ((k, g), (d_1x_1^{-1}, d_1x_1^{-1}), (d_2x_2^{-1}, d_2x_2^{-1}), (x_1q_1x_2^{-1}, x_1q_2x_2^{-1})).
\end{align*}
Here we write $g = \phi_\sG(g) \in D$ and $k = \phi_{\sG^*}(k)\in D$ for $g \in G$ and $k \in K$.
On the other hand,
the symplectic groupoid from \cite[Example 4]{LB-S} has the total space
\[
\hat{\G} = \{(g, \, k, \, d, \, p) \in G \times G^* \times D \times Q^\perp|\, gkdpd^{-1} = e\}/Q
\]
with the $Q$-action given by $q\cdot (g,  k,  d,  p) = (g,  k,  dq^{-1}, qpq^{-1})$.
It is straightforward to check that one has the Lie groupoid isomorphism
$\overline{\G} \to
\hat{\G}$ given by
\[
[(k, g), (d_1, d_1), (d_2, d_2), (q_1, q_2)] \mapsto [g, \, k^{-1}, \, d_2, \, q_1^{-1}q_2].
\]


\subsection{Examples related to semi-simple Lie groups}\label{ss:semisimple}
In this section, we recall how real or complex semi-simple Lie groups can be made into Poisson Lie groups.
Our constructions in $\S$\ref{s:explicit} then give concrete examples of (real smooth or holomorphic) pre-symplectic or symplectic groupoids, most of which
have not been considered in the literature.

Throughout this section, we fix a connected complex semisimple Lie group $G$ with Lie algebra $\g$. Fix also a
Cartan sub-algebra $\h$ of $\g$ and a choice of positive roots for $(\g, \h)$, so that
$\g = \n^- + \h + \n$,
where $\n^-$ and $\n$ are the nilpotent Lie sub-algebras of $\g$ respectively generated by the negative and positive root subspaces.
Set $\b = \h+\n$ and $\b^- = \b + \n^-$, and let $B$ and $B^-$ be the Borel subgroups of $G$ with respective Lie algebras $\b$ and $\b^-$.
Let $H = B \cap B^-$, a maximal torus of $G$, and let $N = \exp(\n) \subset B$ and $N^- = \exp(\n^-) \subset B^-$.
If $\tau: X \to X$ is an involution on a set $X$, let $X^\tau = \{x \in X|\tau(x) = x\}$.

\subsubsection{Real semi-simple Poisson Lie groups}\label{subss:realG}
Let $\lara_\g$ be
the Killing form
of $\g$, and let $\lara_{\g, 0}$ be the imaginary part of $\lara_\g$. Regarding
$\g$ as a Lie algebra over $\mathbb{R}$, then
$(\g, \lara_{\g, 0})$ is a real quadratic Lie algebra. When $\g$ is simple, (real)
lagrangian Lie sub-algebras of $(\g, \lara_{\g, 0})$ have been classified in
\cite{Karo}, and the geometry of the variety $\calL(\g, \lara_{\g, 0})$
 of all lagrangian Lie sub-algebras of $(\g, \lara_{\g, 0})$, such as its irreducible components and its $G$-orbits through the adjoint
action of $G$, have been
studied in \cite{e-l:real}.

In particular, any real form $\g_0$ of $\g$
is a lagrangian Lie sub-algebra of $(\g, \lara_{\g, 0})$ because $\lara_\g$ takes real values on $\g_0$.
As shown in \cite{FL:flag}, one has lagrangian splittings
\begin{equation}\label{eq:ggl0}
\g = \g_0 + \l_0,
\end{equation}
where
$\l_0 \subset \b$.  Taking any $L_0 \subset B$ with Lie algebra $\l_0$,
one gets a pair of dual Poisson Lie groups $(G_0, \pi_{\sG_0})$ and
$(L_0, \pi_{\sL_0})$ by Remark \ref{re:Gs}. Any choice of a lagrangian Lie sub-algebra $\l$ of $(\g, \lara_{\g, 0})$
then defines a $(G_0, \pi_{\sG_0})$-affine (resp. $(L_0, \pi_{\sL_0})$-affine)
Dirac structures on $G_0$ (resp. on $L_0)$, integrations of which can then be constructed via Theorem \ref{thm:presymp} using
integrations $\phi_\sL: L \to G$ of $\l \to \g$.
When $G_0 = K$ is a compact real form, one has the global Iwasawa decomposition
$G = KAN$, and the  pair $(K, \pi_\sK)$ and
$(AN, \pi_{{\sAN}})$ of Poisson Lie groups are studied in detail in \cite{anton:jdg, GW, LuW}.

Turning to examples of Poisson homogeneous spaces of $(G_0, \pi_{\sG_0})$, take first $G_1 = G^{\tau_1}$ to be
any real form of $G$. As the Lie algebra
$\g_1 = \g^{\tau_1}$ is a lagrangian Lie sub-algebra of $(\g, \lara_{\g, 0})$, the (real) symmetric space $G/G_1$ of $G$
then has a Poisson structure $\pi$ defined by the lagrangian splitting in \eqref{eq:ggl0} (see Definition \ref{def:pi-DQ}).
By Proposition \ref{prop:GG-DQ}, every $G_0$-orbit in $G/G_1$, equipped with the restriction of $\pi$,
is a Poisson homogeneous space of $(G_0, \pi_{\sG_0})$. In particular, the symmetric space $G_0/H_0$ of $G_0$, where $H_0 = G_0^{\tau_1} =
G_0 \cap G_1$, becomes a Poisson homogeneous space, and
the quotient construction in Theorem \ref{thm:GH} then gives a symplectic groupoid $(\overline{\G}, \overline{\omega})$
integrating the Poisson manifold $(G_0/H_0, \pi)$. When $G_0 $ is a compact real form of $G$, by $\S$\ref{ss:complete}  and \cite[Example 5.14]{Lu-note},
the manifold $\overline{\G}$
can be identified with $G_0\times_{H_0}(G_1/H_0)$,
 which is a fiber bundle over the compact
Riemannian symmetric space $G_0/H_0$ with fibers diffeomorphic to the non-compact Riemannian symmetric space $G_1/H_0$.
 See  \cite{FL:sym} for more details.

As a second class of examples of Poisson homogeneous spaces of the real semi-simple Poisson Lie group $(G_0, \pi_{\sG_0})$, consider the
flag variety $G/B$ regarded as a real compact manifold. Since  the Lie algebra $\b$ of $B$ is coisotropic with respect to
$\lara_{\g, 0}$, the lagrangian splitting $\g = \g_0 + \l_0$ in \eqref{eq:ggl0} again defines a Poisson structure $\pi$ on $G/B$ (Definition \ref{def:pi-DQ}),
and by Proposition
\ref{prop:GG-DQ}, each of the (finitely many) $G_0$-orbits in $G/B$ is a Poisson homogeneous space of $(G_0, \pi_{\sG_0})$.
 Using the explicit description of the Drinfeld lagrangian Lie sub-algebra associated to $\O$ given in
 the proof of \cite[Theorem  4.1]{FL:flag}, the construction in Theorem \ref{thm:GH} then gives a symplectic groupoid
integrating  the Poisson manifold $(\O, \pi)$ for every $G_0$-orbit $\O$ in $\B$.

\subsubsection{Complex semi-simple Poisson Lie groups}\label{subss:cplxG}
Consider now the complex  Lie algebra $(\d, \lara_\d)$, where $\d=\g \oplus \g$ is the direct product Lie algebra and
\[
\la x_1+y_1, \, x_2+y_2\ra_\d= \la x_1, \, x_2\ra_\g - \la y_1, \, y_2\ra_\g, \hspace{.2in} x_1, x_2, y_1, y_2 \in \g.
\]
Lagrangian Lie sub-algebras of $(\d, \lara_\d)$ were classified in \cite{Karo:cplx}, and the geometry of
the variety $\calL(\d, \lara_\d)$ of all lagrangian Lie sub-algebras of $(\d, \lara_\d)$, such as its
irreducible components and its $G \times G$ and $G_{\rm diag}$-orbits, was studied in \cite{e-l:cplx}.

Let $\g_{\rm diag} = \{(x, x)|x \in \g\}$.
Lagrangian splittings of $(\d, \lara_\d)$ of the form $\d = \g_{\rm diag} + \l$  are called {\it Belavin-Drinfeld splittings}, as they
were classified, up to conjugation by elements of $G_{\rm diag}$, by Belavin and Drinfeld \cite{BD:r}.
We denote any such splitting by
$\d = \g_{\rm diag} + \l_{\sBD}$ and the corresponding multiplicative
holomorphic Poisson structure on $G \cong G_{\rm diag}$ by
$\pi_{\sBD}$.  In particular, one has the
{\it standard splitting} $\d = \g_{\rm diag} + \l_{\rm st}$, where
\[
\l_{\rm st}= \{(x_+ + x_0, \, -x_0 + x_-)|\; x_0 \in \h, \, x_+ \in \n, \, x_- \in \n^-\},
\]
and the corresponding complex Poisson Lie group $(G, \pist)$ is the semi-classical limit of the most studied quantum group $\C_q[G]$ (see \cite{Drinfeld}).
By Remark \ref{rem:LLL} (applied to the holomorphic setting),
any $\l \in \calL(\d, \lara_\d)$ defines a $(G, \pi_{\sBD})$-affine holomorphic Dirac structure $E$ on $G$,
and Theorem \ref{thm:hol} gives a canonical construction of a holomorphic pre-symplectic groupoid over $G$ integrating $E$.

Let $\theta \in {\rm Aut}(G)$ and let $G_\theta = \{(\theta(g), g)|g \in G\}$. Identify
\[
(G \times G)/G_\theta \to G, \;\; (g_1, g_2)G_\theta \mapsto g_1\theta(g_2^{-1}),
\]
so that the $G_{\rm diag}$-orbits in $(G \times G)/G_\theta$ become $\theta$-twisted conjugacy classes in $G$. If $\theta$ is involutive,
the $\theta$-twisted conjugacy class through $e \in G$ can then be identified with the symmetric space $G/G^\theta$.
As the Lie algebra $\g_\theta$ of $G_\theta$ is lagrangian
with respect to $\lara_\d$, any Belavin-Drinfeld splitting gives rise to a holomorphic Poisson structure $\pi_{\sBD}^\prime$ on
$(G \times G)/G_\theta \cong G$, with respect to which every $\theta$-twisted conjugacy class in $G$ becomes a Poisson homogeneous space of
$(G, \pi_{\sBD})$.
Theorem \ref{thm:hol} then gives a holomorphic symplectic groupoid integrating $(C, \pi_{\sBD}^\prime)$
for every $\theta$-twisted conjugacy class $C$.
The complex Poisson manifold $(G, \pi_{\rm st}^\prime)$ corresponding to the standard splitting has been studied in \cite[Proposition 3.18]{ABM}
and \cite{e-l:Grothendieck} for $\theta = {\rm Id}$ and in \cite{Lu:twisted} for general $\theta$.

Let now $P$ be a parabolic subgroup of $G$.
As the Lie algebra $\p$ of $P$ is coisotropic with
respect to $\lara_\g$, each Belavin-Drinfeld splitting of $\d$ defines a Poisson structure $\Pi_\sBD$ on $(G \times G)/(P \times P) \cong (G/P) \times (G/P)$,
with respect to
which every $G_{\rm diag}$-orbit is a Poisson sub-manifold. In particular,
the unique closed $G_{\rm diag}$-orbit, identified with the (partial) flag variety $G/P$ of $G$, carries the Poisson structure $\Pi_\sBD$ making it into
a Poisson homogeneous space of $(G, \pi_{\sBD})$. By Proposition \ref{prop:GG-DQ},
$L=\{(p_1, p_2) \in P \times P|p_1p_2^{-1} \in N_\sP\}$ is a Lie subgroup of $G \times G$ integrating the Drinfeld lagrangian Lie sub-algebra associated to
 $(G/P, \Pi_\sBD)$ as a Poisson homogeneous space of $(G, \pi_{\sBD})$, where $N_P$ is the uniradical of $P$.
Since $L \supset P_{\rm diag} \cong P$, the quotient construction in
Theorem \ref{thm:hol} gives a holomorphic symplectic groupoid integrating the complex Poisson manifold $(G/P, \Pi_{\sBD})$.

Finally, regarding $G$ as a homogeneous space of itself, any two Belavin-Drinfeld lagrangian splittings $\d = \g_{\rm diag} + \l_{\sBD} =
\g_{\rm diag} + \l_{\sBD}^\prime$ define $(G, \pi_{\sBD})$-affine Poisson structures on $G$, and Theorem \ref{thm:hol} (by taking $H$ to be trivial) then
gives a construction of their symplectic groupoids (see
Example \ref{ex:PLinteg} for more details).


\appendix

\section{Deriving the formula for the pre-symplectic form on $\G(L)$}\label{appA}

In this appendix, we explain how the formula for $\omega_\sL$ in Theorem~\ref{thm:presymp} can be derived from general
 properties of multiplicative pre-symplectic forms on $\G(L)$.

\subsection{General properties of pre-symplectic groupoids}\label{subsec:generalmult}

Let $\G\rightrightarrows M$ be a Lie groupoid with Lie algebroid $A$, anchor $\rho$ and Lie bracket $[\cdot,\cdot]$ on $\Gamma(A)$.
For $\omega\in \Omega^2(\G)$, denote by $\mu_\omega: A\to T^*M$ the map in \eqref{eq:mu-omega},
\begin{equation}\label{eq:mu}
\mu_\omega: A\to T^*M, \quad \mu_\omega(a)=i_a\omega|_{\scriptscriptstyle TM},
\end{equation}
and consider $\delta_\omega=(\rho,\mu_\omega): A \to \mathbb{T}M$, as in \eqref{eq:delta-omega}.


Let $(\G, \omega)$ be a pre-symplectic groupoid (Definition~\ref{defn:pre}) integrating the Dirac structure $E=\delta_\omega(A)$.
As we now see, in some situations one can find explicit ways to express $\omega$ in terms of $\mu_\omega$ and $\rho$ (and hence in terms of $E$).

The fact that $\omega$ is multiplicative implies that (cf. \eqref{eq:id1})
\begin{equation}\label{eq:omegamuapp}
\omega(\dif \rt_g(a), Y)|_g = \langle \mu_\omega(a), \dif \t |_g(Y) \rangle,
\end{equation}
for all $a \in A|_{\t(g)}$, $Y\in T_g\G$.
Consider the distribution $\mathsf{V}:=\ker(\dif \s)$,
$
\mathsf{V}|_g = \{ \dif \rt_g (a) \,:\, a\in A|_{\t(g)}\},
$
and suppose that we are given a ``horizontal'' distribution $\mathsf{H}$ such that $T\G= \mathsf{H}\oplus \mathsf{V}$. Using this decomposition to write an arbitrary element in $T\G|_g$ as  $X+ \dif \rt_g(a)$, and recalling that $\dif \t |_g (\dif \rt_g (b)) = \rho(b)$,
we have
\begin{align}\label{eq:wform}
\omega|_g(X+ \dif \rt_g(a),Y+ \dif \rt_g(b)) = & \langle \mu_\omega(a), \rho(b) \rangle  + \langle \mu_\omega(a), \dif \t |_g(Y) \rangle  \\ \nonumber & -\langle \mu_\omega(b), \dif \t |_g(X)  \rangle
 + \omega|_g(X,Y).
\end{align}
This expression shows that the value of $\omega$ at any $g\in \G$ is almost entirely determined by $\mu_\omega$, with the exception of the last term. 

A simple example where a horizontal distribution is canonically given is when $\G$ is the action groupoid for the action of a Lie group $K$ on $M$. Then $\G = M\times K$,  $\mathsf{V}=TK$, and we can take $\mathsf{H}=TM$. In this case, it was shown in \cite[$\S$6.4]{BCWZ} that the last term in \eqref{eq:wform}
defines a map
$$
c_\omega: K\to \Omega^2(M), \qquad c_\omega(k)|_x(X,Y):=\omega|_{(x,k)}(X,Y),
$$
which is an $\Omega^2(M)$-valued 1-cocycle on $K$. Moreover, it results from the condition $\dif \omega=0$ that the corresponding Lie algebra 1-cocycle $c_\omega': \mathfrak{k}\to \Omega^2(M)$ is given by $u\mapsto \dif (\mu_\omega(u))$. So the problem of determining the last term of \eqref{eq:wform} (which leads to a formula for $\omega$ integrating $E$) amounts to finding an integration of $c_\omega'$ to a 1-cocycle on $K$.

We will see that we have a similar picture in our context.



\subsection{The horizontal distribution in $\G(L)$}

Consider the Lie groupoid $\G(L)\rightrightarrows G$ introduced in \eqref{eq:GL}; its Lie algebroid is the action Lie algebroid $A=\l\ltimes G$, with anchor
\begin{equation}\label{eq:rhoapp-2}
\rho: \l\times G \to TG,\;\;\; \rho(u+\xi, g)= \rho_\d(u+\xi)|_g  =-(u^\lt + \pi_{\sG}^\sharp(\xi^\lt))|_g,
\end{equation}
where $\rho_\d$ is the dressing action of $\d$ on $G$. Although $\G(L)$ is not an action Lie groupoid, $T\G(L)$ splits into horizontal and vertical sub-bundles through the trivialization
$$
\G(L)\times (\g \oplus \l) \to T(\G(L)),\;\;\; (\gamma, w, a ) \mapsto (\widetilde{w}+ \widetilde{a})|_\gamma,
$$
defined by the restriction of \eqref{eq:TGD-decomp}. Recall that, for $\gamma=(k, g_1, g_2, l)$ and $a=u +\xi \in \l$,
\begin{equation}\label{eq:z}
\widetilde{a}|_\gamma  = \left (- (\Ad^*_{{g}_1^{-1}}\xi)^\lt|_{k}, \rho_\d (a)|_{g_1},0, a^\rt|_{l}   \right ),
\end{equation}
which agrees with the right-translation of $a$, and such vectors span the vertical distribution $\mathsf{V} = \ker (\dif \s)$.
The horizontal distribution $\mathsf{H}$ at $\gamma\in\G(L)$ is spanned by
\begin{equation}\label{eq:w}
\widetilde{w}|_\gamma = \left ( -\pi^\sharp_{\sG^*}(w^\rt)|_k, (\Ad^*_{k} w)^\rt|_{g_1},w^\rt|_{g_2},0)  \right ), \quad \; w\in \g.
\end{equation}

\subsection{Towards the formula}

For $\omega \in \Omega^2(\G(L))$, consider the map
$\mu_\omega$ in \eqref{eq:mu}.
By viewing $a \in \l$ as a constant section of $\l \times G$, we regard
 $\rho(a)$ as a vector field, and $\mu_\omega(a)$ as a 1-form, on $G$.
Using the horizontal distribution of $\G(L)$, we can now use \eqref{eq:wform} to express any multiplicative 2-form $\omega$. Given $w\in \g$ and $a=u+\xi\in \l$, consider the corresponding horizontal and vertical vector fields $\widetilde{w}$ and $\widetilde{a}$ on $\G(L)$ given in
$\eqref{eq:z}$ and $\eqref{eq:w}$. For $\gamma=(k,g_1,g_2,l)\in \G(L)$, it is clear from the groupoid structure on $\G(L)$ that
\begin{equation}\label{eq:dt}
\dif \t (\widetilde{w}|_\gamma) = (\Ad_k^*w)^\rt|_{g_1} , \qquad \dif \t (\widetilde{a}|_\gamma) =  \rho_\d(a)|_{g_1}.
\end{equation}
Since $\widetilde{a}$ is just the right-invariant vector field on $\G(L)$ determined by the (constant) section $a$ of $\l \times G$, we see that \eqref{eq:wform} takes the form
\begin{align}
\omega(\widetilde{w}_1 + \widetilde{a}_1,\widetilde{w}_2+ \widetilde{a}_2)|_{\gamma} = &\langle \mu_\omega(a_1),\rho(a_2)\rangle|_{g_1} + \langle \mu_\omega (a_1), (\Ad_k^*w_2)^\rt \rangle|_{g_1} \nonumber \\
&- \langle \mu_\omega (a_2),(\Ad_k^*w_1)^\rt\rangle|_{g_1}
+\omega|_\gamma (\widetilde{w}_1,\widetilde{w}_2), \label{eq:omegamu}
\end{align}
for $w_i\in \g$, $a_i=u_i+\xi_i\in \l$, $i=1,2$.

Consider the Dirac structure
$E={\bf e}(\l_\sG)$ (see \eqref{eq:I}),
\begin{equation}\label{eq:Eapp}
E|_g=\{(\rho_\d(a), -\xi^{\lt})|_g: a= u+\xi \in \l\subset \d \}.
\end{equation}
Setting
\begin{equation}\label{eq:muapp}
\mu: \l\times G \to T^*G,\qquad \mu(u+\xi, g) = -\xi^\lt|_g,
\end{equation}
we see that the map $(\rho,\mu): \l\times G \to TG\oplus T^*G$ is injective with image $E$
(injectivity follows since $\dim (\l)$ equals the rank of $E$).
By the discussion following Definition~\ref{defn:pre}, in order for a closed, multiplicative $\omega \in \Omega^2(\G(L))$  to integrate $E$,
one should have  $\mu_\omega = \mu$, so the formula \eqref{eq:omegamu} for such $\omega$ becomes
\begin{align}
\omega(\widetilde{w}_1 + \widetilde{a}_1,\widetilde{w}_2+ \widetilde{a}_2)|_{\gamma} = & \langle \xi_1 ,u_2\rangle - \pi_{\sG}|_{g_1}(\xi_1^\lt,\xi_2^\lt) - \langle \Ad^*_{g_1^{-1}}\xi_1, \Ad_k^*w_2 \rangle \nonumber\\& + \langle \Ad^*_{g_1^{-1}} \xi_2,\Ad_k^*w_1\rangle
+ \omega|_\gamma (\widetilde{w}_1,\widetilde{w}_2). \label{eq:omegamu2}
\end{align}


In analogy with the case of action Lie groupoids recalled in $\S$\ref{subsec:generalmult}, we now explain the sense in which the last term in \eqref{eq:omegamu2} is a groupoid 1-cocycle.

Consider the left representation of $\G(L)$ on $TG$ by
\begin{equation}\label{eq:act}
\gamma\cdot (w^\rt|_{g_2}) := \dif \t (\widetilde{w}|_\gamma)  =(\Ad^*_{k}w)^\rt|_{g_1}, \hspace{.2in}
\gamma = (k, g_1, g_2, l) \in \G(L).
\end{equation}
Correspondingly, one has the induced left representation of $\G(L)$ on $T^*G$, as well as on its exterior powers,
by $\gamma \cdot \alpha := (\gamma^{-1})^*\alpha$. Recall that a 1-cocycle on $\G(L)$ with coefficients in $\wedge^2T^*G$
is a map $c: \G(L)\to \s^*\wedge^2T^*G$ satisfying
\begin{equation}\label{eq:cocyc}
c(\gamma\gamma') = (\gamma')^{-1}.c(\gamma) + c(\gamma') =  (\gamma')^*c(\gamma) + c(\gamma')
\end{equation}
for all composable $\gamma,\gamma' \in \G(L)$.
For
$\omega \in \Omega^2(\G(L))$, define $c_\omega: \G(L)\to \s^*\wedge^2T^*G$  by
\begin{equation}\label{eq:c}
c_\omega(\gamma)(w_1^\rt|_{g_2},w_2^\rt|_{g_2}):= \omega(\widetilde{w}_1,\widetilde{w}_2)|_\gamma,
\hspace{.2in} \gamma = (k, g_1, g_2, l) \in \G(L).
\end{equation}

\medskip

\noindent{{\bf Claim 1:}} {\em If $\omega \in \Omega^2(\G(L))$ is multiplicative, then
 $c_\omega$ is a 1-cocycle on $\G(L)$ with coefficients in $\wedge^2T^*G$.}

\medskip
To prove Claim 1, we consider the tangent map
\[
Tm: T(\G(L)^{(2)})=T\G(L)\times_{TM}T\G(L)\to T\G(L),
\]
which defines a groupoid multiplication on $T\G(L)$, denoted by $\circ$.
If $(X_k,Y_{g_1},w^\rt|_{g_2},a^\rt|_l)$ is  tangent to $\G(L)$ at $\gamma=(k,g_1,g_2,l)$, then
$$
Y_{g_1} = (\Ad^*_{k}w)^\rt|_{g_1} +  \rho_\d (a)|_{g_1}.
$$
Let $\gamma=(k,g_1,g_2,l)$ and $\gamma'=(k', g_1^\prime, g_2^\prime, l^\prime)$ be composable, so that $g_2={g}_1'$.
If
$$
(\widetilde{w}+\widetilde{a})|_\gamma=(X_k,Y_{g_1},w^\rt|_{g_2},a^\rt|_l)\;\; \mbox{ and }\;\;
(\widetilde{w}'+\widetilde{a}')|_{\gamma'} = ({X}'_{{k}'},{Y}'_{{g}_1'},{w'}^\rt|_{{g}_2^\prime},{a'}^\rt|_{{l'}})
$$
are composable, then
$w^\rt|_{g_2} = {Y}_{{g}_1'}' = (\Ad^*_{{k'}}{w'})^\rt|_{{g}_1'} +  \rho_\d ({a'})|_{{g}_1'}$, and one has
\begin{align}
(\widetilde{w}+\widetilde{a})|_\gamma \circ (\widetilde{w}'+\widetilde{a}')|_{\gamma'}= &
(Tm_{\sG^*}({X}_{{k'}}',X_k), Y_{g_1}, {w'}^\rt|_{{g}_2'},Tm_{\sL}(a^\rt_l,{a'}^\rt_{{l'}}))\nonumber \\
=& (Tm_{\sG^*}({X}_{{k'}}',X_k), Y_{g_1}, {w'}^\rt|_{{g}_2'},(a + \Ad_{l}{a'})^\rt|_{l{l'}})\nonumber \\
=& (\widetilde{w}' + \widetilde{(a + \Ad_{l}{a'})})|_{\gamma\gamma'},\label{eq:prod}
\end{align}
where $m_{\sG^*}$ and $m_{\sL}$ denote the respective multiplications on $G^*$ and $L$, and we used
that $Tm_{\sL}(a^\rt|_l,{a'}^\rt|_{{l'}}) = (a + \Ad_{l}{a'})^\rt|_{l{l'}}$.
Note that $\widetilde{w}_{\gamma\gamma'} = \widetilde{(\Ad^*_{k'} w)}_\gamma \circ \widetilde{w}_{\gamma'}$,
so the multiplicativity of $\omega$ implies that
\begin{align*}
\omega_{\gamma\gamma'}(\widetilde{w}_1,\widetilde{w}_2) &=
\omega_{\gamma\gamma'}(\widetilde{(\Ad^*_{k'} w_1)}_\gamma \circ (\widetilde{w}_1)_{\gamma'},
\widetilde{(\Ad^*_{k'} w_2)}_\gamma \circ(\widetilde{w}_2)_{\gamma'})\\
& = \omega_\gamma(\widetilde{(\Ad^*_{k'} w_1)},\widetilde{(\Ad^*_{k'} w_2)}) +
\omega_{\gamma'}(\widetilde{w}_1,\widetilde{w}_2),
\end{align*}
which is exactly \eqref{eq:cocyc}. This proves Claim 1.

\subsection{Finding the missing cocycle}
 A groupoid 1-cocycle $c: \G(L)\to \s^*\wedge^2T^*G$ has an infinitesimal counterpart (see \cite{crainic}), which is a Lie algebroid 1-cocycle:
$$
c': \l\ltimes G \to \wedge^2T^*G, \;\;\; c'(a, g) = \frac{d}{d\epsilon}\Big|_{\epsilon=0} c(\alpha_\epsilon),
$$
where $\alpha_\epsilon$ is a curve on  $\s^{-1}(g)$, passing through $1_g\in \G(L)$ with tangent vector $\widetilde{a}|_{1_g}$ at $\epsilon=0$.
For $a \in \l$, we let $c'(a) \in \Omega^2(G)$ be such that $c'(a)|_g = c'(a, g)$.
(If the Lie groupoid is source connected, then a groupoid 1-cocycle is uniquely determined by the corresponding infinitesimal 1-cocycle.)

\medskip

\noindent{{\bf Claim 2:}} {\em A multiplicative $\omega \in \Omega^2(\G(L))$ satisfies
$\dif \omega(\widetilde{a},\widetilde{w}_1,\widetilde{w}_2) =0$ for all
$a\in \l$, $w_1,w_2\in \g$ if and only if
the groupoid 1-cocycle $c_\omega$ in \eqref{eq:c} satisfies $c_\omega^\prime(a,g) = \dif \mu_\omega(a)|_g$ for all $g \in G$ and  $a \in \l$.}

\medskip
To prove Claim 2, note that horizontal and vertical vector fields satisfy
$$
[\widetilde{w}_1,\widetilde{w}_2]=\widetilde{[w_1,w_2]},\;\;\; [\widetilde{w},\widetilde{a}]=0
$$
for all $a\in \l$ and $w, w_1,w_2\in \g$.
Using the second equation, we see that
$$
\dif \omega(\widetilde{a},\widetilde{w}_1,\widetilde{w}_2)  =  \Lie_{\widetilde{a}}(\omega(\widetilde{w}_1,\widetilde{w}_2)) -\Lie_{\widetilde{w}_1}(\omega(\widetilde{a},\widetilde{w}_2)) + \Lie_{\widetilde{w}_2}(\omega(\widetilde{a},\widetilde{w}_1))+\omega(\widetilde{a},[\widetilde{w}_1,\widetilde{w}_2]),
$$
so the vanishing of $\dif \omega(\widetilde{a},\widetilde{w}_1,\widetilde{w}_2)$ is equivalent to the identity
\begin{align}
\Lie_{\widetilde{a}}(\omega(\widetilde{w}_1,\widetilde{w}_2))  &= \Lie_{\widetilde{w}_1}(\omega(\widetilde{a},\widetilde{w}_2)) - \Lie_{\widetilde{w}_2}(\omega(\widetilde{a},\widetilde{w}_1))-\omega(\widetilde{a},[\widetilde{w}_1,\widetilde{w}_2])\nonumber\\
&= \Lie_{\widetilde{w}_1}\langle \t^*\mu_\omega(a),\widetilde{w}_2\rangle - \Lie_{\widetilde{w}_2}\langle \t^*\mu_\omega({a}),\widetilde{w}_1\rangle- \langle \t^*\mu_\omega({a}), [\widetilde{w}_1,\widetilde{w}_2] \rangle\nonumber\\
&=\t^*(\dif \mu_\omega(a))(\widetilde{w}_1,\widetilde{w}_2).\label{eq:case3}
\end{align}

On the other hand, for $\gamma = (k,g_1,g_2,l) \in \G(L)$, let
$\gamma_\epsilon$ be a curve in $\s^{-1}(g_2)$ passing through $\gamma$ with tangent vector $\widetilde{a}|_\gamma$ at $\epsilon =0$, and let $\alpha_\epsilon = \gamma_\epsilon \gamma^{-1}$. Then
\begin{align*}
\Lie_{\widetilde{a}}(\omega(\widetilde{w}_1,\widetilde{w}_2))|_\gamma & = \frac{d}{d\epsilon}\Big|_{\epsilon =0}
c_\omega(\gamma_\epsilon)(w_1^\rt|_{g_2},w_2^\rt|_{g_2}) = \frac{d}{d\epsilon}\Big|_{\epsilon =0}
 c_\omega(\alpha_\epsilon\gamma)(w_1^\rt|_{g_2},w_2^\rt|_{g_2})\\
&= \frac{d}{d\epsilon}\Big|_{\epsilon =0} (\gamma^*c_\omega(\alpha_\epsilon) + c(\gamma))(w_1^\rt|_{g_2},w_2^\rt|_{g_2})
=c_\omega'(\gamma)(\gamma\cdot(w_1^\rt|_{g_2}),\gamma\cdot (w_2^\rt|_{g_2}))\\
&=\t^*(c_\omega'(a))(\widetilde{w}_1,\widetilde{w}_2)|_{\gamma},
\end{align*}
where the last equality uses \eqref{eq:act}.
By comparison, we see that \eqref{eq:case3} holds if and only if $c_\omega'(a) = \dif \mu_\omega(a)$, proving Claim 2.

\medskip

Assuming that $\omega$ is a pre-symplectic form integrating $E = {\mathbf e}(\l_\sG)$ such that $\mu_\omega(u+\xi) = -\xi^\lt$, by Claim 2, the  groupoid 1-cocycle
$c_\omega: \G(L)\to \s^*\wedge^2TG$ must satisfy $c_\omega^\prime(u+\xi)=-\dif \xi^\lt$ for all $u + \xi \in \l$.
We now give such a groupoid 1-cocycle.

\medskip

\noindent {{\bf Claim 3}}: {\em The map $c: \G(L)\to \s^*\wedge^2T^*G$ given by
\begin{equation}\label{eq:cpi}
c(\gamma)(w_1^\rt|_{g_2},w_2^\rt|_{g_2}):= -(\rt_{k^{-1}}\pi_{\sG^*}|_k)(w_1,w_2)  \;\;\; \mbox{for}\;\; \gamma=(k,g_1,g_2,l)
\end{equation}
is a 1-cocycle on $\G(L)$ satisfying $c'(u+\xi)=-\dif \xi^\lt$ for all $u + \xi \in \l$.}

\medskip
Indeed, the fact that $c$ is a 1-cocycle follows from the multiplicativity of $\pi_{\sG^*}$.
To calculate $c'$, let $a= u + \xi \in \l$ and consider a curve $\gamma_\epsilon= (k(\epsilon),g_1(\epsilon),$ $g,l(\epsilon))$ passing through $1_g=(1,g,g,1)$ at
$\epsilon=0$ with tangent vector $\widetilde{a}|_{1_g}$. Then
\begin{align*}
c'(z)(w_1^\rt|_g,w_2^\rt|_g) &
=-\frac{d}{d\epsilon}\Big|_{\epsilon=0}(\rt_{k_\epsilon^{-1}}\pi_{\sG^*}|_{k_\epsilon})(w_1,w_2) = \left (\frac{d}{d\epsilon}\Big|_{\epsilon=0}k_\epsilon \right ) ([w_1,w_2]) \\
&= -\Ad^*_{g^{-1}}\xi([w_1,w_2]) = -\xi([\Ad_{g^{-1}}w_1,\Ad_{g^{-1}}w_2])\\
&= \xi^\lt([(\Ad_{g^{-1}}w_1)^\lt,(\Ad_{g^{-1}}w_2)^\lt])|_g=-\dif \xi^\lt (w_1^\rt|_g,w_2^\rt|_g).
\end{align*}
This proves Claim 3.

Combining with \eqref{eq:omegamu2}, we are led to the expression
\begin{align}\label{eq:omegaapp}
\omega(\widetilde{w}_1 + \widetilde{a}_1,\widetilde{w}_2+ \widetilde{a}_2)|_{\gamma} =
& \langle \xi_1,u_2\rangle - \pi_{\sG}|_{g_1}(\xi_1^\lt,\xi_2^\lt) - \langle \Ad^*_{g^{-1}_1}\xi_1,
Ad_k^*w_2 \rangle \nonumber\\& + \langle \Ad^*_{g^{-1}_1} \xi_2,\Ad_k^*w_1\rangle
- \pi_{\sG^*}|_k({w}^\rt_1,{w}^\rt_2),
\end{align}
which agrees with \eqref{eq:omegaL-explicit}.
If $\G(L)$ is source simply-connected,  then, by the uniqueness of the integrations of Dirac structures and Lie-algebroid cocycles, we see that a multiplicative pre-symplectic form $\omega$ on $\G(L)$ integrating $E = \mathbf{e}(\l_\sG)$ must be (isomorphic to one) given by \eqref{eq:omegaapp}.


\small{

}
\end{document}